\ifdefined\pdfoutput
  \pdfoutput=1
\fi

\documentclass[10pt,a4paper]{article}

\usepackage[T1]{fontenc}
\usepackage[utf8]{inputenc}
\usepackage[english]{babel}
\usepackage{lmodern}
\usepackage[final,protrusion=true,expansion=true]{microtype}
\usepackage[
  a4paper,
  top=2.25cm,
  bottom=2.25cm,
  left=2.65cm,
  right=2.65cm,
  footskip=28pt,
  marginparwidth=1.75cm
]{geometry}

\usepackage{setspace}
\usepackage{amsmath,amssymb,amsfonts,amsthm,mathtools}
\usepackage{etoolbox}
\usepackage{bm}
\allowdisplaybreaks
\numberwithin{equation}{section}
\mathtoolsset{showonlyrefs=false}

\usepackage{xcolor}
\usepackage{graphicx}
\graphicspath{{figures/}{images/}}
\usepackage{booktabs}
\usepackage{array}
\usepackage{adjustbox}
\usepackage{caption}
\usepackage{subcaption}
\usepackage{float}
\usepackage[section]{placeins}

\usepackage{tikz}
\usetikzlibrary{
  arrows.meta,
  calc,
  decorations.markings,
  decorations.pathreplacing,
  positioning
}
\tikzset{
  vtx/.style={
    circle,
    fill=black,
    draw=black,
    inner sep=2pt
  },
  gluevtx/.style={
    circle,
    fill=red,
    draw=red,
    inner sep=2.15pt
  },
  ed/.style={
    blue,
    line width=1.15pt
  },
  every picture/.style={
    line cap=round,
    line join=round
  }
}

\usepackage{enumitem}
\setlist[itemize]{
  leftmargin=2.2em,
  itemsep=0.15em,
  topsep=0.35em
}
\setlist[enumerate]{
  leftmargin=2.2em,
  itemsep=0.15em,
  topsep=0.35em
}

\usepackage{authblk}

\usepackage{titlesec}

\titleformat{\section}
  {\large\bfseries}
  {\thesection.}
  {0.65em}
  {}

\titleformat{\subsection}
  {\normalsize\bfseries}
  {\thesubsection.}
  {0.65em}
  {}

\titleformat{\subsubsection}
  {\normalsize\itshape}
  {\thesubsubsection.}
  {0.65em}
  {}

\titlespacing*{\section}
  {0pt}
  {2.4ex plus .4ex minus .2ex}
  {1.1ex}

\titlespacing*{\subsection}
  {0pt}
  {1.9ex plus .3ex minus .2ex}
  {0.8ex}

\titlespacing*{\subsubsection}
  {0pt}
  {1.5ex plus .2ex minus .2ex}
  {0.6ex}

\usepackage{aliascnt}

\theoremstyle{plain}

\newtheorem{theorem}{Theorem}[section]

\newaliascnt{lemma}{theorem}
\newtheorem{lemma}[lemma]{Lemma}
\aliascntresetthe{lemma}

\newaliascnt{question}{theorem}
\newtheorem{question}[question]{Question}
\aliascntresetthe{question}

\newaliascnt{proposition}{theorem}
\newtheorem{proposition}[proposition]{Proposition}
\aliascntresetthe{proposition}

\newaliascnt{corollary}{theorem}
\newtheorem{corollary}[corollary]{Corollary}
\aliascntresetthe{corollary}

\newaliascnt{conjecture}{theorem}
\newtheorem{conjecture}[conjecture]{Conjecture}
\aliascntresetthe{conjecture}

\newaliascnt{problem}{theorem}

\aliascntresetthe{problem}

\newaliascnt{observation}{theorem}
\newtheorem{observation}[observation]{Observation}
\aliascntresetthe{observation}

\newcounter{proofgroup}

\newtheoremstyle{indentedplain}
  {\topsep}       % Space above
  {\topsep}       % Space below
  {\itshape}      % Body font
  {\parindent}    % Indentation
  {\bfseries}     % Head font
  {.}             % Punctuation after head
  {0.5em}         % Space after head
  {}              % Head specification

\theoremstyle{indentedplain}

\newtheorem{claim}{Claim}[proofgroup]

\newtheorem{case}{Case}[proofgroup]

\AtBeginEnvironment{proof}{\stepcounter{proofgroup}}

\theoremstyle{definition}

\newaliascnt{definition}{theorem}

\aliascntresetthe{definition}

\newaliascnt{example}{theorem}

\aliascntresetthe{example}

\theoremstyle{remark}

\newaliascnt{remark}{theorem}

\aliascntresetthe{remark}

\newaliascnt{fact}{theorem}

\aliascntresetthe{fact}

\makeatletter
\expandafter\patchcmd\csname\string\proof\endcsname
  {\itshape}
  {\bfseries\upshape}
  {}
  {\PackageWarning{template}{Could not make the proof heading bold}}
\makeatother

\usepackage[numbers,sort&compress]{natbib}
\definecolor{linkblue}{RGB}{0,70,150}
\definecolor{citepurple}{RGB}{82,20,160}

\usepackage[
  colorlinks=true,
  linkcolor=linkblue,
  citecolor=citepurple,
  urlcolor=black,
  bookmarksnumbered=true,
  bookmarksopen=true
]{hyperref}

\newcommand{\cZ}{\mathcal Z}
\newcommand{\cS}{\mathcal S}

\usepackage{bookmark}
\usepackage[nameinlink,noabbrev,capitalise]{cleveref}

\crefname{theorem}{Theorem}{Theorems}
\crefname{lemma}{Lemma}{Lemmas}
\crefname{proposition}{Proposition}{Propositions}
\crefname{corollary}{Corollary}{Corollaries}
\crefname{conjecture}{Conjecture}{Conjectures}
\crefname{problem}{Problem}{Problems}
\crefname{observation}{Observation}{Observations}
\crefname{claim}{Claim}{Claims}
\crefname{case}{Case}{Cases}
\crefname{definition}{Definition}{Definitions}
\crefname{example}{Example}{Examples}
\crefname{remark}{Remark}{Remarks}
\crefname{question}{Question}{Questions}
\crefname{fact}{Fact}{Facts}
\crefname{equation}{equation}{equations}
\crefname{figure}{Figure}{Figures}
\crefname{table}{Table}{Tables}
\Crefname{equation}{Equation}{Equations}
\title{\bfseries\boldmath
Every fork-free graph is perfectly weight divisible
}

\author[1]{Feng Liu\thanks{
Email: \href{mailto:liufeng0609@126.com}{liufeng0609@126.com}.}}

\author[1]{Shuang Sun\thanks{
Email: \href{mailto:chocolatesun@sjtu.edu.cn}{chocolatesun@sjtu.edu.cn}.}}

\author[1]{Yan Wang\thanks{
Email: \href{mailto:yan.w@sjtu.edu.cn}{yan.w@sjtu.edu.cn}~(Corresponding author).}}

\author[2]{Qi Wu\thanks{
Email: \href{mailto:wuqimath@163.com}{wuqimath@163.com}.}}

\author[3]{Jiasheng Zeng\thanks{
Email: \href{mailto:jasonzeng@mail.ustc.edu.cn}{jasonzeng@mail.ustc.edu.cn}.}}
\affil[1]{\scriptsize
School of Mathematical Sciences,
Shanghai Jiao Tong University,
Shanghai 200240, China
}

\affil[2]{\scriptsize  School of Mathematics and Statistics, Jiangsu Normal University, Xuzhou, Jiangsu, 221116, China}
\affil[3]{\scriptsize Department of Mathematics,
Hong Kong University of Science and Technology,
 Hong Kong}
\date{}

\begin{document}
% ============================================================

\maketitle
\thispagestyle{plain}
\vspace{-1.2em}

\begin{abstract}
A graph $G$ is \emph{perfectly weight divisible} if, for every positive
integral weight function on $V(G)$ and every induced subgraph $H$ of $G$
with at least one edge, the vertex set $V(H)$ can be partitioned into two
sets $A$ and $B$ such that $H[A]$ is perfect and the maximum weight of a
clique in $H[B]$ is smaller than the maximum weight of a clique in $H$.
Perfect divisibility and its weighted form provide a natural approach to
polynomial $\chi$-boundedness. A
\emph{fork}, also known as a \emph{chair}, is the graph obtained from a claw
by subdividing one of its edges once. In this paper, we prove that every
fork-free graph is perfectly weight divisible. As a
consequence, we confirm a conjecture of Sivaraman that every fork-free graph
is perfectly divisible.

\smallskip

\noindent\textbf{Keywords.}
Fork-free graphs; perfect weight divisibility; perfect divisibility

\noindent\textbf{2020 Mathematics Subject Classification.}
05C15, 05C75
\end{abstract}

\section{Introduction}
Throughout this paper, all graphs are finite and simple unless stated otherwise.
 We use standard
graph-theoretic terminology and notation; see \cite{Bondy2008,West1996}. For
a positive integer $k$, let $[k]=\{1,\ldots,k\}$. A proper $k$-coloring of a
graph $G$ is a map $\phi\colon V(G)\to [k]$ such that $\phi(u)\ne\phi(v)$
whenever $uv\in E(G)$. The chromatic number $\chi(G)$ is the least integer
$k$ for which $G$ has a proper $k$-coloring, and the clique number
$\omega(G)$ is the maximum size of a clique in $G$.
For a graph $H$, a graph $G$ is \emph{$H$-free} if it contains no induced
subgraph isomorphic to $H$. More generally, for a family $\mathcal F$ of
graphs, $G$ is \emph{$\mathcal F$-free} if it is $F$-free for every
$F\in\mathcal F$. A hereditary class $\mathcal G$ is \emph{$\chi$-bounded}
if there is a function $f$ such that $\chi(G)\leq f(\omega(G))$ for every
$G\in\mathcal G$. Such a function is called a \emph{$\chi$-binding function}
for $\mathcal G$. A central problem in this area is to determine which
classes defined by forbidden induced subgraphs are $\chi$-bounded and, when
they are, to find good $\chi$-binding functions. This line of research was
initiated by Gy\'arf\'as~\cite{Gya75}; see also the survey of Scott and
Seymour~\cite{ScottSeymourSurvey}. The most basic examples of $\chi$-bounded classes are the perfect graphs. A
graph $G$ is \emph{perfect} if $\chi(H)=\omega(H)$ for every induced
subgraph $H$ of $G$. Thus the identity function is a $\chi$-binding function
for the class of perfect graphs. A \emph{hole} is an induced cycle of length
at least four, and an \emph{antihole} is the complement of a hole. A hole or
antihole is \emph{odd} if it has an odd number of vertices. The Strong
Perfect Graph Theorem of Chudnovsky, Robertson, Seymour, and
Thomas~\cite{CRST2006} gives a complete forbidden induced subgraph
characterization of perfect graphs.

\begin{theorem}[Chudnovsky--Robertson--Seymour--Thomas~\cite{CRST2006}]
\label{thm:SPGT}
A graph is perfect if and only if it contains no odd hole and no odd
antihole.
\end{theorem}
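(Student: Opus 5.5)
The statement is the Strong Perfect Graph Theorem. The paper invokes it as a black box from~\cite{CRST2006}, so the plan below follows the published strategy rather than anything new. I split it into the two directions.

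\emph{Necessity} is elementary: perfection is hereditary, so it suffices to check that odd holes and odd antiholes are not perfect. For an odd hole $C_{2k+1}$ with $k\ge 2$ we have $\omega=2$ but $\chi=3$. For an odd antihole $\overline{C_{2k+1}}$ we have $\omega=k$, while every colour class has size at most $2$ (a stable set in the complement of a cycle is a clique in the cycle), so $\chi\ge\lceil (2k+1)/2\rceil=k+1$.

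\emph{Sufficiency} is the real content. I would argue by minimal counterexample. Call a graph \emph{Berge} if it has no odd hole and no odd antihole, and let $G$ be a minimally imperfect Berge graph. By Lov\'asz's Perfect Graph Theorem (perfection is closed under complementation), $\overline{G}$ is also minimally imperfect and Berge, so every structural statement may be used in both $G$ and $\overline{G}$. The central step is a decomposition theorem: every Berge graph either belongs to one of five basic classes, or admits one of a short list of decompositions. The basic classes are bipartite graphs, their complements, line graphs of bipartite graphs, their complements, and double split graphs. The decompositions are a proper $2$-join in $G$ or $\overline{G}$, or a balanced skew partition.

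Given the decomposition theorem, I would close the argument in three short steps.
\begin{enumerate}
\item Each basic class is perfect. This follows from K\H{o}nig's edge-colouring theorem and K\H{o}nig's matching theorem (with complementation), plus a direct check for double split graphs.
\item A minimally imperfect graph admits no $2$-join. This is the result of Cornu\'ejols and Cunningham: one glues colourings of the two blocks, with a marker path replacing the other side.
\item A minimally imperfect graph admits no balanced skew partition. This sharpens Chv\'atal's star-cutset lemma and uses parity properties of paths (e.g.\ even pairs), as in the CRST argument.
\end{enumerate}
Together these give a contradiction.

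The main obstacle is the decomposition theorem itself. My plan would follow CRST and split on which configuration $G$ contains:
\begin{itemize}
\item a \emph{prism} or its complement; if so, I would grow it to a maximal ``strip system'' and show that $G$ is either a line graph of a bipartite graph or has a $2$-join;
\item a \emph{wheel} (a hole with a hub having several neighbours on it) of suitable parity; if so, I would show that it forces a balanced skew partition, via the Roussel--Rubio lemma controlling anticonnected sets attached to odd paths;
\item neither; then I would show that $G$ is basic or double split.
\end{itemize}
The wheel and prism analyses are where the length lies. Since that work is far beyond the scope of this paper, I would cite it rather than reproduce it.
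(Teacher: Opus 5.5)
Your proposal takes essentially the same approach as the paper, which gives no proof of Theorem~\ref{thm:SPGT} and simply cites~\cite{CRST2006}; your check of necessity is correct, and deferring sufficiency to that work is exactly what the paper does. One minor correction to your roadmap: the decomposition theorem proved in~\cite{CRST2006} also allows a proper homogeneous pair as an outcome (Chudnovsky later showed this outcome can be dropped), so if you follow CRST literally you need a fourth closing step, the Chv\'atal--Sbihi theorem that no minimally imperfect graph admits a homogeneous pair.
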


Therefore, forbidden
induced subgraphs play an important role in coloring problems. A
construction of Erd\H{o}s~\cite{Erdos1959} shows that, for every finite
family $\mathcal F$, if the class of $\mathcal F$-free graphs is
$\chi$-bounded, then $\mathcal F$ contains a forest. This leads to the
Gy\'arf\'as--Sumner conjecture, proposed independently by
Gy\'arf\'as~\cite{Gya75} and Sumner~\cite{Sumner1981}. It asserts that the
class of $T$-free graphs is $\chi$-bounded for every tree $T$. Kierstead and Penrice~\cite{KiersteadPenrice} proved the conjecture when T has radius two, but it remains open in general.

A \emph{claw} is the complete bipartite graph $K_{1,3}$. Every line graph is
claw-free, so claw-free graphs are a natural generalization of line graphs. They
have been studied extensively; see the survey of Faudree, Flandrin, and
Ryj\'a\v{c}ek~\cite{RF1997}. Chudnovsky and Seymour~\cite{MCPS2008,ChudnovskySeymour} gave a
complete structural description of claw-free graphs.
The coloring of claw-free graphs is already quite subtle. Brause,
Randerath, Schiermeyer, and Vumar~\cite{CBBR2019} proved that even the class
of $\{3K_1,2K_2\}$-free graphs, which is a subclass of the claw-free graphs,
does not admit a linear $\chi$-binding function. On the other hand, if $G$
is claw-free, then the neighborhood of every vertex contains no independent
set of size three and no clique of size $\omega(G)$. Hence
$\chi(G)\leq\Delta(G)+1\leq R(3,\omega(G))$. Kim~\cite{JHK95} proved that
$R(3,t)=\Theta(t^2/\log t)$. It follows that
$\chi(G)=O(\omega(G)^2/\log\omega(G))$ for every claw-free graph $G$.
Chudnovsky and Seymour~\cite{MC2010} also proved that every connected
claw-free graph $G$ containing an independent set of size three satisfies
$\chi(G)\leq 2\omega(G)$.

A \emph{fork} is obtained from a claw by subdividing one edge once. It is
also called a \emph{chair}. Since every fork contains an induced claw and an
induced $P_4$, both claw-free graphs and $P_4$-free graphs are fork-free. So fork-free graphs form a natural extension of claw-free graphs.
Since a fork is a tree of radius two, the theorem of Kierstead and
Penrice~\cite{KiersteadPenrice} implies that the class of fork-free graphs
is $\chi$-bounded. Their theorem, however, does not provide a polynomial
$\chi$-binding function. This led Schiermeyer and
Randerath~\cite{SchiermeyerRanderath} to ask the following question.

\begin{question}[Schiermeyer--Randerath~\cite{SchiermeyerRanderath}]
\label{ques:fork}
Does the class of fork-free graphs have a polynomial $\chi$-binding
function?
\end{question}

Several subclasses of fork-free graphs were later shown to admit linear
$\chi$-binding functions. An \emph{antifork} is the complement of a fork.
Chudnovsky, Cook, and Seymour~\cite{ChudnovskyCookSeymour} described the
structure of graphs with no induced fork or antifork and proved that every
such graph $G$ satisfies $\chi(G)\leq 2\omega(G)$. Chudnovsky, Huang,
Karthick, and Kaufmann~\cite{ChudnovskyHuangKarthickKaufmann} proved that
every $\{\text{fork},C_4\}$-free graph $G$ satisfies
$\chi(G)\leq\lceil3\omega(G)/2\rceil$. For general fork-free
graphs, Liu, Schroeder, Wang, and Yu~\cite{LiuSchroederWangYu} answered
Question~\ref{ques:fork} affirmatively by proving that every fork-free graph
$G$ satisfies $\chi(G)\leq 7\omega(G)^2$.

A natural way to
obtain a quadratic chromatic bound is to apply induction on the clique
number. For every induced subgraph $H$ with at least one edge, one seeks a
partition $V(H)=A\mathbin{\dot\cup}B$ such that $H[A]$ is perfect and
$\omega(H[B])<\omega(H)$. The graph $H[A]$ can then be colored with at most
$\omega(H)$ colors, while the clique number of the remaining graph is
smaller. Induction gives
$\chi(G)\leq 1+2+\cdots+\omega(G)=\binom{\omega(G)+1}{2}$.
This motivates the notion of perfect divisibility, introduced by
Ho\`ang~\cite{Hoang}. A \emph{perfect division} of a graph $H$ with at least
one edge is a partition $V(H)=A\mathbin{\dot\cup}B$ such that $H[A]$ is
perfect and $\omega(H[B])<\omega(H)$. A graph $G$ is
\emph{perfectly divisible} if every induced subgraph of $G$ with at least one
edge has a perfect division. This connection between structure and coloring led Sivaraman~\cite{TKJK2022} to propose
the following conjecture.

\begin{conjecture}[Sivaraman~\cite{TKJK2022}]\label{forkconj}
Every fork-free graph is perfectly divisible.
\end{conjecture}

Chudnovsky and Sivaraman~\cite{ChudnovskySivaraman} introduced a
weighted version of perfect divisibility. Following Xu and
Zhuang~\cite{XuZhuang}, we use positive integral weight functions and
require the weighted condition for every induced subgraph. A vertex $v$ of weight $k$ can be viewed as being blown up into a clique of size $k$. This
interpretation will be used in our proof. Let
$h\colon V(G)\to\mathbb Z_{>0}$ be a positive integral weight function. For
a clique $K$ of an induced subgraph $H$ of $G$, let
$h(K)=\sum_{v\in K}h(v)$, and let $\omega_h(H)$ be the maximum value of
$h(K)$ over all cliques $K$ of $H$. An \emph{$h$-perfect division} of $H$ is
a partition $V(H)=A\mathbin{\dot\cup}B$ such that $H[A]$ is perfect and
$\omega_h(H[B])<\omega_h(H)$. A graph $G$ is
\emph{$h$-perfectly divisible} if every induced subgraph $H$ of $G$ with at
least one edge has an $h|_{V(H)}$-perfect division. It is
\emph{perfectly weight divisible} if it is $h$-perfectly divisible for every
positive integral weight function $h$ on $V(G)$. Taking $h=1$ shows that perfect weight
divisibility implies perfect divisibility.

Proving perfect divisibility for a hereditary graph class is difficult,
and only a few nontrivial classes are known to be perfectly divisible.
Most known results concern classes defined by forbidding at least two
induced subgraphs. A \emph{banner} is obtained from a $C_4$ by attaching
a pendant vertex to one of its vertices, and a \emph{bull} is obtained
from a triangle by attaching a pendant edge to each of two distinct
vertices. Ho\`ang~\cite{Hoang} proved that every
$\{\text{banner},\text{odd hole}\}$-free graph is perfectly divisible.
Chudnovsky and Sivaraman~\cite{ChudnovskySivaraman} proved that every
$\{P_5,\text{bull}\}$-free graph and every
$\{\text{odd hole},\text{bull}\}$-free graph is perfectly weight
divisible, and hence perfectly divisible. For fork-free graphs, before the present work, every positive result
toward Conjecture~\ref{forkconj} required forbidding at least one
additional induced configuration besides the fork. We recall the
configurations appearing in these results. A \emph{co-dart} is the
disjoint union of an isolated vertex and a paw, where a paw is obtained
from $K_{1,3}$ by adding an edge between two of its
leaves~\cite{TKJK2022}. A \emph{balloon} is obtained from a hole $C$ by
adding two adjacent vertices $x$ and $y$ such that $x$ has exactly two
adjacent neighbors in $V(C)$ and $y$ is anticomplete to $V(C)$. It is an \emph{odd balloon} if $C$ is
an odd hole~\cite{WuXu}. A \emph{parachute} is obtained from a hole $C$
by adding two adjacent vertices $p$ and $q$ such that $p$ is complete
to $V(C)$ and $q$ is anticomplete to $V(C)$. It is an
\emph{odd parachute} if $C$ is an odd
hole~\cite{LanLiuWuZhou}. These configurations are illustrated in
Figure~\ref{fig:graphs}.

\begin{figure}[H]
    \centering
    \tikzstyle{v}=[circle, draw, fill=black, inner sep=0pt, minimum size=5pt]

    \setlength{\tabcolsep}{0.6em}
    \renewcommand{\arraystretch}{1.0}

    \begin{tabular}{cccc}
        % claw
        \begin{tikzpicture}[scale=0.84, baseline=(current bounding box.center)]
            \useasboundingbox (-1.2,-1.35) rectangle (1.2,1.35);

            \node[v] (c) at (0,0.10) {};
            \node[v] (a) at (-0.90,-0.70) {};
            \node[v] (b) at (0.90,-0.70) {};
            \node[v] (d) at (0,1.00) {};

            \draw[blue, line width=0.9pt]
                (c)--(a)
                (c)--(b)
                (c)--(d);
        \end{tikzpicture}
        &
        % fork
        \begin{tikzpicture}[scale=0.84, baseline=(current bounding box.center)]
            \useasboundingbox (-1.2,-1.35) rectangle (1.2,1.35);

            \node[v] (c) at (0,-0.05) {};
            \node[v] (a) at (-0.90,-0.75) {};
            \node[v] (b) at (0.90,-0.75) {};
            \node[v] (s) at (0,0.65) {};
            \node[v] (d) at (0,1.20) {};

            \draw[blue, line width=0.9pt]
                (c)--(a)
                (c)--(b)
                (c)--(s)--(d);
        \end{tikzpicture}
        &
        % antifork
        \begin{tikzpicture}[scale=0.84, baseline=(current bounding box.center)]
            \useasboundingbox (-1.2,-1.35) rectangle (1.2,1.35);

            \node[v] (l) at (-0.75,0.12) {};
            \node[v] (r) at (0.75,0.12) {};
            \node[v] (t) at (0,0.94) {};
            \node[v] (b) at (0,-0.50) {};
            \node[v] (p) at (0,-1.10) {};

            \draw[blue, line width=0.9pt]
                (l)--(r)
                (l)--(t)
                (r)--(t)
                (l)--(b)
                (r)--(b)
                (b)--(p);
        \end{tikzpicture}
        &
        % co-dart
        \begin{tikzpicture}[scale=0.84, baseline=(current bounding box.center)]
            \useasboundingbox (-1.2,-1.35) rectangle (1.2,1.35);

            \node[v] (l) at (-0.70,0.35) {};
            \node[v] (r) at (0.70,0.35) {};
            \node[v] (m) at (0,-0.30) {};
            \node[v] (p) at (0,-1.00) {};
            \node[v] (i) at (0,1.10) {};

            \draw[blue, line width=0.9pt]
                (l)--(r)--(m)--(l)
                (m)--(p);
        \end{tikzpicture}
        \\[-0.20em]
        {\small\emph{claw}}
        &
        {\small\emph{fork}}
        &
        {\small\emph{antifork}}
        &
        {\small\emph{co-dart}}
        \\[1.3em]

        % banner
        \begin{tikzpicture}[scale=0.84, baseline=(current bounding box.center)]
            \useasboundingbox (-1.2,-1.35) rectangle (1.2,1.35);

            \node[v] (l) at (-0.75,0.15) {};
            \node[v] (t) at (0,0.90) {};
            \node[v] (r) at (0.75,0.15) {};
            \node[v] (b) at (0,-0.60) {};
            \node[v] (p) at (0,-1.20) {};

            \draw[blue, line width=0.9pt]
                (l)--(t)--(r)--(b)--(l)
                (b)--(p);
        \end{tikzpicture}
        &
        % bull
        \begin{tikzpicture}[scale=0.84, baseline=(current bounding box.center)]
            \useasboundingbox (-1.2,-1.35) rectangle (1.2,1.35);

            \node[v] (l) at (-0.70,0.05) {};
            \node[v] (r) at (0.70,0.05) {};
            \node[v] (b) at (0,-0.70) {};
            \node[v] (pl) at (-0.70,0.95) {};
            \node[v] (pr) at (0.70,0.95) {};

            \draw[blue, line width=0.9pt]
                (l)--(r)--(b)--(l)
                (l)--(pl)
                (r)--(pr);
        \end{tikzpicture}
        &
        % odd balloon
        \begin{tikzpicture}[scale=0.84, baseline=(current bounding box.center)]
            \useasboundingbox (-1.2,-1.35) rectangle (1.2,1.35);

            \node[v] (c1) at (-0.72,0.08) {};
            \node[v] (c2) at (0.72,0.08) {};
            \node[v] (c3) at (0.95,0.78) {};
            \node[v] (c4) at (0,1.15) {};
            \node[v] (c5) at (-0.95,0.78) {};
            \node[v] (u) at (0,-0.55) {};
            \node[v] (s) at (0,-1.20) {};

            \draw[blue, line width=0.9pt]
                (c1)--(c2)--(c3)--(c4)--(c5)--(c1);

            \draw[blue, line width=0.9pt]
                (u)--(c1)
                (u)--(c2)
                (u)--(s);
        \end{tikzpicture}
        &
        % odd parachute
        \begin{tikzpicture}[scale=0.84, baseline=(current bounding box.center)]
            \useasboundingbox (-1.2,-1.35) rectangle (1.2,1.35);

            \node[v] (h1) at (-0.95,0.20) {};
            \node[v] (h2) at (-0.58,1.00) {};
            \node[v] (h3) at (0.58,1.00) {};
            \node[v] (h4) at (0.95,0.20) {};
            \node[v] (h5) at (0,-0.92) {};
            \node[v] (u) at (0,-0.02) {};
            \node[v] (v1) at (0,0.60) {};

            \draw[blue, line width=0.9pt]
                (h1)--(h2)--(h3)--(h4)--(h5)--(h1);

            \draw[blue, line width=0.9pt]
                (u)--(h1)
                (u)--(h2)
                (u)--(h3)
                (u)--(h4)
                (u)--(h5)
                (u)--(v1);
        \end{tikzpicture}
        \\[-0.20em]
        {\small\emph{banner}}
        &
        {\small\emph{bull}}
        &
        {\small\emph{odd balloon}}
        &
        {\small\emph{odd parachute}}
    \end{tabular}

    \caption{Illustrations of the configurations.}
    \label{fig:graphs}
\end{figure}

Karthick, Kaufmann, and Sivaraman~\cite{TKJK2022} proved
Conjecture~\ref{forkconj} for $\{\text{fork},F\}$-free graphs when
$F\in\{P_6,\text{co-dart},\text{bull}\}$. Wu and Xu~\cite{WuXu}
proved that every $\{\text{fork},\text{odd balloon}\}$-free graph is
perfectly divisible. As observed by Xu and Zhuang~\cite{XuZhuang}, their
argument in fact proves perfect weight divisibility. Xu and
Zhuang~\cite{XuZhuang} also proved that every
$\{\text{fork},P_7\}$-free graph and every
$\{\text{fork},P_6\cup K_1\}$-free graph is perfectly weight divisible.
More recently, Lan, Liu, Wu, and Zhou~\cite{LanLiuWuZhou} proved that every
$\{\text{fork},\text{odd parachute}\}$-free graph is either perfectly
divisible or has a \emph{trisimplicial vertex}, that is, a vertex whose
neighborhood is the union of three cliques. In this paper, we prove the following.

\begin{theorem}\label{thm:main}
Every fork-free graph is perfectly weight divisible.
\end{theorem}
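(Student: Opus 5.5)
We argue by contradiction. Fix a fork-free graph $G$ and a positive integral weight $h$, and choose a counterexample with $|V(G)|$ minimal, and then with $\sum_v h(v)$ minimal. Since the class of fork-free graphs is hereditary and restricting $h$ to an induced subgraph gives again a positive integral weight, it suffices to show that $G$ itself has an $h$-perfect division. Minimality then says that every proper induced subgraph of $G$, with any positive integral weight, has such a division. Blowing a vertex $v$ up into a clique of size $h(v)$ produces a graph that is still fork-free, because a fork contains no pair of true twins. We use this interpretation freely. Let $\omega=\omega_h(G)$.

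The standard way to build a division is to find an induced subgraph $P$ that is perfect and meets every maximum-weight clique; then $(V(P),V(G)\setminus V(P))$ is an $h$-perfect division. The natural choice for $P$ is built around a vertex $v$ lying in a maximum-weight clique. Let $N=N(v)$ and $M=V(G)\setminus N[v]$.

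By fork-freeness, for every $m\in M$ that has a neighbour in $N$, the set $N(m)\cap N$ contains no induced $P_4$-end structure together with a nonadjacent vertex of $N$. More precisely, if $a\in N$ is adjacent to $m$, and $b,c\in N$ are nonadjacent to $a$, nonadjacent to each other, and nonadjacent to $m$, then $v,a,b,c,m$ would contain a fork. So $N$ splits into pieces on which $G[N]$ has very restricted anticonnectivity. The candidate is $A=\{v\}\cup M'$, where $M'$ is a suitably chosen anticomponent-based set, or alternatively $A=$ ``$v$ plus the non-neighbours of $v$''. The latter is tempting because $\{v\}\cup M$ is perfect whenever $G[M]$ is perfect, and every clique avoiding $\{v\}\cup M$ lies in $N(v)$.

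That suggests the following key steps.

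First, reduce to the case where $G$ has no clique cutset and no homogeneous set (proper module). A module $X$ can be contracted to a single vertex carrying weight $\omega_h(G[X])$. If the quotient has an $h$-perfect division, we lift it by replacing the contracted vertex by a perfect division of $G[X]$, which exists by minimality, and we check that perfection is preserved under substitution (Lovász). Clique cutsets are handled similarly, by gluing divisions along the cutset.

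Second, if $G$ contains no odd hole and no odd antihole, then $G$ is perfect by \cref{thm:SPGT} and we are done. Fork-free graphs containing an odd antihole of length at least $7$ have a rigid structure, which we handle directly. So we may assume $G$ contains an odd hole $C$.

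Third, analyse attachments to a shortest odd hole $C$ in fork-free graphs. Every vertex outside $C$ is anticomplete to $C$, complete to $C$, or has neighbours forming one or two consecutive short paths in $C$. Fork-freeness forbids most of the other patterns.

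Fourth, use this structure, together with the absence of modules and clique cutsets, to locate a vertex $v$ such that every maximum-weight clique either contains $v$ or meets a prescribed set $S\subseteq M$ for which $G[\{v\}\cup S]$ is perfect. Typically $S$ is the set of vertices anticomplete to a suitable part of $N(v)$, and we show it is $P_4$-free or a disjoint union of cliques, using the fork argument above. Equivalently, we aim to show that $N(v)\setminus S$-cliques cannot reach weight $\omega$.

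The main obstacle is the fourth step: the configurations excluded in earlier work (odd balloons, odd parachutes) are exactly where the simple choice $A=\{v\}\cup M$ fails. There, $M$ itself contains an odd hole while $N(v)$ carries a maximum clique together with the parachute apex. I would first try choosing $v$ of minimum weight-degree inside the odd hole, or taking $A$ to be a maximal stable-like set of ``peripheral'' vertices, for example $C$ together with its anticomplete part. This is the point where the weighted blow-up and the minimality of $\sum h$ help: we may lower the weight of a parachute apex and apply induction.
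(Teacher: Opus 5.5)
Your proposal is a strategy rather than a proof, and the gap is at its center. The step that carries all the difficulty is your fourth step: finding a vertex $v$ and a set $S\subseteq M(v)$ with $G[S]$ perfect such that every maximum-weight clique contains $v$ or meets $S$. You leave it open and say so yourself. The odd-antihole case (``rigid structure, handled directly'') and the attachment classification for a shortest odd hole are also only asserted. As you note, this step is exactly where earlier partial results had to exclude extra configurations such as odd balloons and odd parachutes, so what remains is essentially the whole conjecture.

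Two auxiliary steps are also weaker than they look:
\begin{itemize}
\item The clique-cutset reduction is not a matter of gluing, because the divisions of the two sides can disagree on the cutset. The paper resolves this in \cref{lem:cutset}, and that argument relies on claw-freeness, for instance to show that $N_{V_i}(x)$ is a clique for every $x$ in a minimum clique cutset. In a fork-free graph you do not have this.
\item Lowering the weight of a parachute apex $p$ gains nothing. If $\omega_{h'}(G)=\omega$, the cliques through $p$ of $h$-weight $\omega$ are no longer of maximum $h'$-weight, so an $h'$-perfect division need not hit them. If $\omega_{h'}(G)=\omega-1$, then $p$ lies in every maximum-weight clique, so $\{p\}$ was already a perfect transversal and $G$ was not a counterexample.
\end{itemize}

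The missing ideas are the two structural inputs the paper uses, which avoid any analysis of odd holes in fork-free graphs.
\begin{enumerate}
\item \emph{Reduction to claw-free graphs.} By the theorem of Xu and Zhuang (\cref{thm:xu-zhuang}), a minimal non-perfectly weight divisible fork-free graph is claw-free, so the fork disappears from the problem.
\item \emph{Structure theorem.} For a claw-free minimal counterexample, the paper proves weighted analogues of ``no simplicial vertex'' and ``no clique cutset''. It then blows each vertex $v$ up into a clique of size $h(v)$, so maximum-weight cliques become maximum cliques and these properties persist. Finally it applies the Chudnovsky--Seymour structure theorem in the form of \cref{thm:structure}.
\end{enumerate}

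Your ``$v$ together with its antineighbourhood'' idea is what works in some outcomes: when $V(G)$ is a union of three cliques, and for thickenings in $\cS_3$ and $\cS_7$, where $G[M_G(x)]$ is perfect. For icosahedral thickenings the paper instead uses an explicit bipartite union of bags.

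For compositions of spots and stripes of types $\cZ_1,\dots,\cZ_5$, the transversal is assembled globally from three ingredients:
\begin{itemize}
\item a maximum cut $F$ of the spot multigraph $B$, for which $L(B)[F]$ is perfect and $F$ meets every maximum clique of $L(B)$, including those supported on triangles (\cref{lem:cut});
\item an orientation of the stripe edges so that every hub not met by a selected spot receives a head terminal (\cref{lem:orientation});
\item in each stripe, the complement of the tail terminal, or for $\cZ_5$ a central set plus the head terminal, glued together via \cref{lem:clique-attachment}.
\end{itemize}

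Without the reduction to claw-free graphs and this decomposition, your outline has no mechanism for producing the transversal. As it stands, it does not prove the theorem.
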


Since perfect weight divisibility implies perfect divisibility,
Theorem~\ref{thm:main} confirms Conjecture~\ref{forkconj}. 
As a corollary, we prove the following upper bound on the chromatic number of fork-free graphs.

\begin{corollary}\label{cor:fork-coloring}
Every fork-free graph $G$ satisfies
$\chi(G)\leq\binom{\omega(G)+1}{2}$.
\end{corollary}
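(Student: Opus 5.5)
The statement to prove is the final one above, Corollary~\ref{cor:fork-coloring}; I take Theorem~\ref{thm:main} as given. The plan is induction on $\omega(G)$, using Theorem~\ref{thm:main} with the constant weight $h\equiv 1$. With this weight $\omega_h=\omega$, so an $h$-perfect division is exactly a perfect division. Hence every induced subgraph of a fork-free graph with at least one edge has a perfect division.

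Prove by induction on $k\ge 0$ the following claim: every fork-free graph $G$ with $\omega(G)=k$ satisfies $\chi(G)\le\binom{k+1}{2}$.

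\emph{Base cases.} If $k=0$, then $G$ is empty and there is nothing to show. If $k=1$, then $G$ has no edges, so $\chi(G)\le 1=\binom{2}{2}$.

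\emph{Inductive step.} Let $k\ge 2$, so $G$ has an edge. Apply the perfect division to $H=G$ itself, giving a partition $V(G)=A\mathbin{\dot\cup}B$ such that:
\begin{itemize}
\item $G[A]$ is perfect, so $\chi(G[A])=\omega(G[A])\le k$;
\item $\omega(G[B])\le k-1$.
\end{itemize}
Since $G[B]$ is an induced subgraph of a fork-free graph, it is fork-free. Note that $\omega(G[B])$ may be strictly smaller than $k-1$. This causes no trouble because $\binom{j+1}{2}$ is monotone in $j$, so the induction hypothesis (applied in strong form) gives $\chi(G[B])\le\binom{k}{2}$.

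\emph{Combining.} Color $G[A]$ and $G[B]$ with disjoint palettes. This gives a proper coloring of $G$ with
\[
\chi(G)\le k+\binom{k}{2}=\binom{k+1}{2}
\]
colors.

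There is no real obstacle: all the difficulty sits in Theorem~\ref{thm:main}, and the corollary is the standard telescoping argument. The only points to check are that fork-freeness is hereditary and that the induction is strong on the clique number.
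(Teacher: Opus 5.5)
Your proposal is correct and follows the paper's own proof: specialize Theorem~\ref{thm:main} to $h\equiv 1$ to obtain a perfect division $V(G)=A\mathbin{\dot\cup}B$, color $G[A]$ with at most $\omega(G)$ colors, and apply induction on the clique number to $G[B]$. Your handling of the base cases and of the possibility $\omega(G[B])<k-1$ via monotonicity of $\binom{j+1}{2}$ correctly fills in the details the paper leaves implicit.
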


\begin{proof}
By Theorem~\ref{thm:main}, every fork-free graph is perfectly divisible.
The result follows by induction on $\omega(G)$. Indeed, a perfect division
$V(G)=A\mathbin{\dot\cup}B$ satisfies
$\chi(G[A])\leq\omega(G)$ and
$\omega(G[B])<\omega(G)$. Applying induction to $G[B]$ gives
$\chi(G)\leq\omega(G)+\binom{\omega(G)}{2}
=\binom{\omega(G)+1}{2}$.
\end{proof}
This
improves the bound $\chi(G)\leq7\omega(G)^2$ proved by Liu, Schroeder,
Wang, and Yu~\cite{LiuSchroederWangYu}.

The paper is organized as follows.
Section~\ref{sec:preliminaries} gives the reduction to claw-free graphs,
introduces perfect transversals, and establishes the properties of a minimal
counterexample.
Section~\ref{sec:main-claw-free} introduces the structural terminology,
states the required theorems of Chudnovsky and Seymour, and derives the
graph-level reduction used in the proof.
Section~\ref{sec:basic-classes} treats the three-clique outcome and
thickenings of $\cS_1$, $\cS_3$, and $\cS_7$.
Section~\ref{sec:stripes} constructs suitable transversals for the two-marker
stripe classes $\cZ_1,\ldots,\cZ_5$, and Section~\ref{sec:composition}
combines these local transversals.
Section~\ref{sec:main-results} proves the claw-free result and then
Theorem~\ref{thm:main}. Finally, we conclude in Section~\ref{concluding}.

\section{Perfect transversals and minimal counterexamples}
\label{sec:preliminaries}
We begin by fixing some notation that will be used throughout the proofs.
For $X\subseteq V(G)$, we write $G[X]$ for the subgraph of $G$ induced by
$X$. For $v\in V(G)$ and $Y\subseteq V(G)$, let $N_G(v)$ and $N_G[v]$
denote the open and closed neighborhoods of $v$, respectively, and put
$N_Y(v)=N_G(v)\cap Y$ and $M_G(v)=V(G)\setminus N_G[v]$. If the graph is
clear from the context, we omit the subscript. For $u,v\in V(G)$, we simply write $u\sim v$ if $uv \in E(G)$, and write $u\nsim v$ if $uv \notin E(G)$. A graph is a \emph{minimal non-perfectly weight divisible graph} if it is not
perfectly weight divisible but every proper induced subgraph is perfectly
weight divisible. Xu and Zhuang~\cite{XuZhuang} proved the following
reduction.

\begin{theorem}[Xu--Zhuang~\cite{XuZhuang}]
\label{thm:xu-zhuang}
Every minimal non-perfectly weight divisible fork-free graph is claw-free.
\end{theorem}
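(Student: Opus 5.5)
The plan is to argue by contradiction. Let $G$ be a minimal non-perfectly weight divisible fork-free graph, and suppose $G$ contains an induced claw with center $v$ and pairwise nonadjacent leaves $a,b,c$. By minimality, every proper induced subgraph of $G$ is perfectly weight divisible. Hence there is a weight function $h$ such that $G$ itself, with at least one edge, has no $h$-perfect division. The aim is to build one, using the fork-freeness around the center $v$. The natural candidate is a division with $A$ built from $N[v]$-type sets or anticomponents, following the approach of Wu--Xu and Karthick--Kaufmann--Sivaraman.

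\textbf{Step 1: the local structure of the claw.} Since $G$ is fork-free, every vertex of $M(v)$ adjacent to a leaf of a claw at $v$ is constrained: if $x\in M(v)$ is adjacent to $a$ but not to $b,c$, then $\{v,a,b,c,x\}$ is a fork. So each vertex of $M(v)$ with a neighbour in $N(v)$ has strong adjacency properties. I would prove that for every component $C$ of $G[M(v)]$, the set $N(C)\cap N(v)$ is complete or nearly complete to $C$. The usual fork-free lemma states that $N(v)$ splits so that each vertex of $N(v)$ with a neighbour in a component of $M(v)$ is complete to it, or the component is homogeneous.

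\textbf{Step 2: decompose $G$.} From Step 1, either some vertex $v$ is such that $G[N(v)]$ is perfect, or some homogeneous set or clique cutset appears. A homogeneous set $X$ reduces to a proper induced subgraph by replacing $X$ with a single vertex whose weight is $\omega_h(G[X])$. This is exactly why weights are used: substitution preserves the property, so minimality rules out proper homogeneous sets. Clique cutsets are ruled out similarly by gluing divisions.

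\textbf{Step 3: construct the division.} With no homogeneous sets, take $A$ to be a maximal set containing $v$ and its non-neighbours such that $G[A]$ is perfect and every maximum-weight clique meets $A$. A maximum-weight clique $K$ must contain a vertex of $N[v]$ or of $M(v)$. If $K\subseteq N(v)$, then $K\cup\{v\}$ is heavier, which is a contradiction. So every maximum-weight clique meets $\{v\}\cup M(v)$. It then suffices to show that $G[\{v\}\cup M(v)]$, or a suitable subset meeting all heavy cliques, is perfect. I expect proving the perfectness of this set to be the main obstacle: odd holes or antiholes in $M(v)$ must be excluded using the claw at $v$ and fork-freeness. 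I would first try to show that an odd hole $H$ in $M(v)$ together with the leaf $a$, which is forced to have a neighbour pattern on $H$, yields a fork. If that fails, I would remove from $A$ the components of $M(v)$ not meeting heavy cliques and handle the remaining ones by induction on a proper subgraph.
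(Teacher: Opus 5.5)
The paper does not reprove this statement; it imports it from Xu and Zhuang. Judged on its own, your outline has a genuine gap exactly where the content of the theorem lies.

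Two of your ingredients are sound. First, the substitution reduction works: contract a homogeneous set $X$ to one vertex of weight $\omega_h(G[X])$, combine with an $h$-perfect division of $G[X]$, and you get that a minimal counterexample has no homogeneous set. Second, positivity of the weights makes $\{v\}\cup M(v)$ meet every member of $\Omega_{G,h}$.

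What remains is to show that, when $v$ is a claw centre, $G[M(v)]$ is perfect, or that some other perfect set meets every heaviest clique. You only name this as ``the main obstacle'', and the mechanism you suggest for it is false. Let $v$ be adjacent to three pairwise nonadjacent vertices $a,b,c$, each complete to an induced $C_5$ that is anticomplete to $v$.
\begin{itemize}
\item Checking the possible fork centres ($v$, $a$, and a vertex of the $C_5$) shows this graph is fork-free.
\item It is connected and has neither a simplicial vertex nor a clique cutset.
\item Yet $v$ is a claw centre and $G[M(v)]=C_5$.
\end{itemize}
So no argument that uses only the claw at $v$ and fork-freeness can exclude odd holes from $M(v)$. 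The absence of homogeneous sets has to enter structurally; here $\{a,b,c\}$ and the $C_5$ are homogeneous. For instance, you would need a lemma showing that an odd hole or odd antihole in $M(v)$ forces a homogeneous set or a fork, and this requires analysing all vertices mixed on that hole. Your outline contains nothing of this kind, and odd antiholes are not considered at all.

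Several intermediate assertions are also unsupported or misdirected.
\begin{itemize}
\item The Step~1 claim that $N(v)$ is ``complete or nearly complete'' to each component of $G[M(v)]$ is undefined, and false without primeness. Take $v$ adjacent to independent $a,b,c$, add $x$ adjacent to $a,b$, and add a pendant vertex $y$ at $x$. This graph is fork-free, and $a$ is mixed on the component $\{x,y\}$ of $G[M(v)]$.
\item The dichotomy in Step~2 is asserted without proof and concerns $G[N(v)]$. Your transversal $\{v\}\cup M(v)$ instead needs $G[M(v)]$ to be perfect.
\item Excluding clique cutsets ``similarly by gluing divisions'' is not routine. The divisions of the two sides may disagree on the cutset, and the paper's proof of Lemma~\ref{lem:cutset} resolves this only by using claw-freeness, which is what you are trying to establish.
\item Choosing $A$ as a maximal perfect set meeting every heaviest clique presupposes the conclusion.
\item The fallback of treating components of $M(v)$ ``by induction on a proper subgraph'' gives no rule for combining the resulting divisions into one perfect set.
\end{itemize}
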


By Theorem~\ref{thm:xu-zhuang}, it suffices to prove that every
claw-free graph is perfectly weight divisible. This will be done in the
rest of the paper.

Let $G$ be a graph with at least one edge, and let $\Omega_G$ denote the
family of all maximum cliques of $G$. A set $S\subseteq V(G)$ is an
\emph{$\Omega_G$-transversal} if $S\cap K\neq\emptyset$ for every
$K\in\Omega_G$. If, in addition, $G[S]$ is perfect, then $S$ is called a
\emph{perfect $\Omega_G$-transversal}. For the weighted version, let
$h\colon V(G)\to\mathbb Z_{>0}$ be a positive integral weight function,
and let $\Omega_{G,h}$ denote the family of all cliques $K$ of $G$ with
$h(K)=\omega_h(G)$. A set $S\subseteq V(G)$ is an
\emph{$\Omega_{G,h}$-transversal} if $S\cap K\neq\emptyset$ for every
$K\in\Omega_{G,h}$. If, in addition, $G[S]$ is perfect, then $S$ is
called a \emph{perfect $\Omega_{G,h}$-transversal}.

\begin{observation}\label{obs:weighted-transversal}
Let $G$ be a graph with at least one edge, and let
$h\colon V(G)\to\mathbb Z_{>0}$. Then $G$ has an $h$-perfect division if
and only if it has a perfect $\Omega_{G,h}$-transversal. In particular,
$G$ has a perfect division if and only if it has a perfect
$\Omega_G$-transversal.
\end{observation}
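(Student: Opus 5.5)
The plan is to prove both directions straight from the definitions: an $h$-perfect division $V(G)=A\mathbin{\dot\cup}B$ corresponds to the transversal $S=A$, and conversely. The only facts used are that perfection is hereditary and that $h$ takes positive values.

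\emph{From a division to a transversal.} Suppose $V(G)=A\mathbin{\dot\cup}B$ is an $h$-perfect division, and put $S=A$. Then $G[S]$ is perfect by definition. Let $K\in\Omega_{G,h}$ and suppose $K\cap A=\emptyset$. Then $K\subseteq B$, so $K$ is a clique of $G[B]$. This gives $\omega_h(G[B])\ge h(K)=\omega_h(G)$, contradicting $\omega_h(G[B])<\omega_h(G)$. Hence $S$ meets every member of $\Omega_{G,h}$, so $S$ is a perfect $\Omega_{G,h}$-transversal.

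\emph{From a transversal to a division.} Let $S$ be a perfect $\Omega_{G,h}$-transversal, and set $A=S$ and $B=V(G)\setminus S$. Then $G[A]$ is perfect. Every clique $K$ of $G[B]$ is a clique of $G$, so $h(K)\le\omega_h(G)$. Equality would place $K$ in $\Omega_{G,h}$, but $K$ is disjoint from $S$, contradicting the transversal property. So every clique of $G[B]$ has weight strictly less than $\omega_h(G)$, and therefore $\omega_h(G[B])<\omega_h(G)$.

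Two degenerate points need checking. If $B=\emptyset$, we read $\omega_h(G[B])=0$. This is less than $\omega_h(G)$ because $G$ has an edge and $h$ is positive, so $\omega_h(G)>0$. The definition of perfect division places no nonemptiness requirement on $A$ or $B$, so no further care is needed.

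The unweighted statement is the special case $h\equiv1$. Then $\omega_h=\omega$ and $\Omega_{G,h}=\Omega_G$, so an $h$-perfect division is exactly a perfect division. Since this is a direct unwinding of definitions, there is no real obstacle; the only care needed is with the empty-set conventions above.
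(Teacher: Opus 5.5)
Your proof is correct and takes the same approach as the paper. The paper simply notes that, for a partition $V(G)=A\mathbin{\dot\cup}B$, the inequality $\omega_h(G[B])<\omega_h(G)$ holds exactly when $A$ meets every member of $\Omega_{G,h}$. You write out both directions of that equivalence with $S=A$, and you also check the empty-$B$ convention and the case $h\equiv1$.
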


\begin{proof}
For a partition $V(G)=A\mathbin{\dot\cup}B$, the inequality
$\omega_h(G[B])<\omega_h(G)$ holds if and only if $A$ meets every member
of $\Omega_{G,h}$. 
\end{proof}

\subsection{Properties of minimal counterexamples}
Throughout this subsection, let $G$ be a minimal non-perfectly weight
divisible claw-free graph. Since $G$ is not perfectly weight divisible, there is a positive integral weight function $h$ on
$V(G)$ such that $G$ has no $h$-perfect division. Fix such a function $h$.
By Observation~\ref{obs:weighted-transversal}, $G$ has no perfect
$\Omega_{G,h}$-transversal. For every induced subgraph $H$ of $G$, the
notation $\Omega_{H,h}$ refers to the family of maximum-weight cliques of
$H$ with respect to $h|_{V(H)}$. Let $\mathcal C_G$ denote the set of
components of $G$.

A vertex is \emph{simplicial} if its closed neighborhood is a clique. A
\emph{clique cutset} of $G$ is a clique $X\subseteq V(G)$ such that $G-X$
is disconnected. Hu, Xu, and Zhuang~\cite[Theorems~3.1\textup{(1)}
and~1.4]{HXZ2026} proved in the unweighted setting that a minimal
non-perfectly divisible claw-free graph has no simplicial vertex and no
clique cutset. We prove the following weighted analogues.

\begin{lemma}\label{lem:minimal}
$G$ is connected and has no simplicial vertex.
\end{lemma}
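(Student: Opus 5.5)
We argue by contradiction in both parts, each time building a perfect $\Omega_{G,h}$-transversal from transversals of proper induced subgraphs. Such transversals exist by minimality together with Observation~\ref{obs:weighted-transversal}. Two preliminary remarks will be used throughout.

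First, $G$ is not perfect: otherwise $V(G)$ itself would be a perfect $\Omega_{G,h}$-transversal. In particular $G$ has an edge, and every induced subgraph of $G$ with at most one vertex or with no edge is perfect.

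Second, every proper induced subgraph $H$ of $G$ with at least one edge is perfectly weight divisible. Hence $H$ has an $h|_{V(H)}$-perfect division, that is, a perfect $\Omega_{H,h}$-transversal.

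\emph{Connectivity.} Suppose $G$ has at least two components, so each $C\in\mathcal C_G$ is a proper induced subgraph. A clique lies inside a single component, so every $K\in\Omega_{G,h}$ lies in some component $C$ with $\omega_h(C)=\omega_h(G)$, and then $K\in\Omega_{C,h}$. For each such component $C$ I choose a set $S_C$ as follows.
\begin{itemize}
\item If $C$ has an edge, let $S_C$ be a perfect $\Omega_{C,h}$-transversal, which exists by the second remark.
\item If $C$ is a single vertex, let $S_C=V(C)$.
\end{itemize}
Let $S$ be the union of these sets $S_C$. Then $S$ meets every member of $\Omega_{G,h}$. Moreover $G[S]$ is a disjoint union of perfect graphs, hence perfect. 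This contradicts the choice of $h$, so $G$ is connected.

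\emph{No simplicial vertex.} Suppose $v$ is simplicial, so $N[v]$ is a clique. Every clique containing $v$ is contained in $N[v]$. Since weights are positive, the only clique containing $v$ that can lie in $\Omega_{G,h}$ is $N[v]$ itself. I split into two cases according to $\omega_h(G-v)$.

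\emph{Case $\omega_h(G-v)<\omega_h(G)$.} Every member of $\Omega_{G,h}$ contains $v$, so $\Omega_{G,h}=\{N[v]\}$. Then $S=\{v\}$ is a perfect $\Omega_{G,h}$-transversal, a contradiction.

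\emph{Case $\omega_h(G-v)=\omega_h(G)$.} Every member of $\Omega_{G,h}$ either is $N[v]$ or belongs to $\Omega_{G-v,h}$. I first need $G-v$ to have an edge. If it had none, then $G$, a connected graph with a simplicial vertex $v$ and $G-v$ edgeless, would be a star or a single edge, which is perfect; this contradicts the first remark. So by minimality $G-v$ has a perfect $\Omega_{G-v,h}$-transversal $S'$. Put $S=S'\cup\{v\}$. Then $S$ meets $N[v]$ and all other members of $\Omega_{G,h}$.

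It remains to check that $G[S]$ is perfect; this is the only step needing an argument. In $G[S]$ the vertex $v$ is simplicial. Holes have no simplicial vertex, since the two neighbours of any vertex on a hole are nonadjacent. Antiholes on at least five vertices have none either: in $\overline{C_n}$ with $n\ge5$, the neighbourhood of a vertex contains two vertices consecutive on $C_n$, and these are nonadjacent in the complement. So any odd hole or odd antihole in $G[S]$ avoids $v$, and therefore lies in $G[S']$, which is perfect. By Theorem~\ref{thm:SPGT}, $G[S]$ is perfect. (Alternatively, $\{v\}$'s neighbourhood $N(v)\cap S$ is a clique cutset separating $v$, and clique-cutset gluing preserves perfection.)

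This contradicts the choice of $h$, so $G$ has no simplicial vertex, completing the proof of Lemma~\ref{lem:minimal}.
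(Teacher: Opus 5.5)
Your proof is correct and follows essentially the same route as the paper: for connectivity you take the union of component-wise transversals, and for a simplicial vertex $v$ you take $S'\cup\{v\}$. The differences are minor. Your split on $\omega_h(G-v)$ is unnecessary, and where you rule out an edgeless $G-v$ using the non-perfectness of $G$, the paper simply takes $S=V(G-v)$. You also certify perfectness of $G[S'\cup\{v\}]$ through Theorem~\ref{thm:SPGT}, by noting that holes and antiholes have no simplicial vertex, whereas the paper uses the elementary observation that a simplicial vertex can always receive a color missing from its neighborhood.
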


\begin{proof}
Suppose first that $G$ is disconnected. For every
$D\in\mathcal C_G$ containing an edge, let $S_D$ be a perfect
$\Omega_{D,h}$-transversal; such a set exists because $D$ is a proper
induced subgraph of $G$. If $D$ is an isolated vertex, put
$S_D=V(D)$. Then $S=\bigcup_{D\in\mathcal C_G}S_D$ induces a disjoint
union of perfect graphs and hence is perfect. Every member of
$\Omega_{G,h}$ is a maximum-weight clique of one component, so it meets
$S$. Thus $S$ is a perfect $\Omega_{G,h}$-transversal, a contradiction.

Suppose next that $v$ is a simplicial vertex of $G$. If $G-v$ contains an
edge, let $S$ be a perfect $\Omega_{G-v,h}$-transversal; otherwise, put
$S=V(G-v)$. Since $v$ is simplicial, $G[S\cup\{v\}]$ is perfect. Indeed,
let $J$ be an induced subgraph of $G[S\cup\{v\}]$ containing $v$, and put
$k=\omega(J)$. By perfection of $J-v$, it has a proper $k$-coloring.
The set $N_J(v)$ is a clique and $N_J[v]$ is also a clique, so
$|N_J(v)|\leq k-1$; hence some color is absent from $N_J(v)$ and can be
assigned to $v$. Induced subgraphs not containing $v$ are perfect as
induced subgraphs of $G[S]$. Moreover,
every member of $\Omega_{G,h}$ either contains $v$ or belongs to
$\Omega_{G-v,h}$. Hence $S\cup\{v\}$ is a perfect
$\Omega_{G,h}$-transversal,  a contradiction.
\end{proof}

By Lemma~\ref{lem:minimal}, $G$ is connected. We next show that $G$ has no
clique cutset.

\begin{lemma}\label{lem:cutset}
$G$ has no clique cutset.
\end{lemma}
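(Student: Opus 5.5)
The plan is to argue by contradiction. Suppose $X$ is a clique cutset of $G$. I would build a perfect $\Omega_{G,h}$-transversal by gluing transversals of the two sides of $X$, which contradicts the choice of $h$.

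\textbf{Choosing $X$.} Take $X$ to be an inclusion-minimal clique cutset. Then every $x\in X$ has a neighbour in every component of $G-X$; otherwise $X\setminus\{x\}$ would still be a clique cutset. Claw-freeness now gives strong structure:
\begin{itemize}
\item $G-X$ has exactly two components $A$ and $B$, since a vertex of $X$ with neighbours in three components is the centre of a claw;
\item for every $x\in X$, both $N_A(x)$ and $N_B(x)$ are cliques, since two nonadjacent neighbours in $A$ together with a neighbour in $B$ form a claw.
\end{itemize}
Put $G_1=G[A\cup X]$ and $G_2=G[B\cup X]$. Both are proper induced subgraphs of $G$, and both contain an edge because $X\neq\emptyset$ and $x$ has neighbours in $A$ and in $B$. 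By minimality of $G$ they have perfect transversals $S_1\in\Omega_{G_1,h}$-transversals and $S_2$ for $\Omega_{G_2,h}$.

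\textbf{Transversal property.} There are no edges between $A$ and $B$, so every clique of $G$ lies in $G_1$ or in $G_2$. Also $\omega_h(G)=\max\{\omega_h(G_1),\omega_h(G_2)\}$. Hence every member of $\Omega_{G,h}$ lies in $\Omega_{G_1,h}$ or in $\Omega_{G_2,h}$, and so it meets $S=S_1\cup S_2$.

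\textbf{Perfection of $G[S]$ — the main obstacle.} By Theorem~\ref{thm:SPGT} it suffices to exclude odd holes and odd antiholes in $G[S]$.
\begin{itemize}
\item \emph{Structures crossing $X$.} A hole meets the clique $S\cap X$ in at most two adjacent vertices. Deleting them leaves a connected path, which lies in $A$ or in $B$. An antihole with at least $7$ vertices has no clique cutset. So any odd hole or antihole lies inside $S\cap(A\cup X)$ or inside $S\cap(B\cup X)$.
\item \emph{Mismatch on $X$.} These sets may contain vertices of $(S_2\setminus S_1)\cap X$ (respectively of $(S_1\setminus S_2)\cap X$). So they are not simply $G[S_1]$ or $G[S_2]$, and this is where the real difficulty lies.
\end{itemize}

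\textbf{Handling the mismatch.} I would try two routes, in this order.
\begin{enumerate}
\item Use the structure from minimality. Suppose an odd hole or antihole $C$ in $S\cap(A\cup X)$ uses some $x\in (S_2\setminus S_1)\cap X$.
\begin{itemize}
\item If $C$ is an antihole on at least $7$ vertices, then $x$ has two nonadjacent neighbours on $C$. Since $C\cap X$ is a clique, at least one of these lies in $A$, and the other must be in $A$ too; this contradicts that $N_A(x)$ is a clique. (Also, $\alpha(C)=2$, and I would use this to pin down where the vertices sit.)
\item If $C$ is a hole, $x$ has exactly two neighbours on $C$. Together with a neighbour $b\in N_B(x)$, I would aim for a contradiction, or for a way to reroute $C$ through a vertex of $S_1$.
\end{itemize}
\item Make the two transversals agree on $X$. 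Choose $S_1$ first. Then take $S_2$ to be a perfect transversal of a suitably modified instance on $G[B\cup(X\cap S_1)]$, for example with the weights on $B$ adjusted, so that $S_2\cap X\subseteq S_1$. With agreement on $X$, the Gallai-type fact that gluing two perfect graphs along a clique yields a perfect graph finishes the proof.
\end{enumerate}
The crux is showing that maximum-weight cliques of $G_2$ avoiding $X\cap S_1$ are still hit inside $B$. I expect this step to need the clique structure of the sets $N_B(x)$.
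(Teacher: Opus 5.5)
Your preliminary steps are sound: the minimal clique cutset, exactly two sides, $N_A(x)$ and $N_B(x)$ being cliques, $S_1\cup S_2$ meeting every member of $\Omega_{G,h}$, and every odd hole or antihole of $G[S_1\cup S_2]$ lying on one side of $X$. But the proposal stops where the content of the lemma begins, and the mismatch on $X$ is never resolved.

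\textbf{The antihole step is wrong.} Of two nonadjacent neighbours of $x$ on $C$, at most one lies in $X$, but nothing forces the other into $A$. In the configuration that actually arises, the neighbours of the mismatched vertex around the antihole alternate between $X$ and the side. So the clique property of $N_A(x)$ gives no contradiction.

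\textbf{The hole case cannot be closed with arbitrary $S_1,S_2$.} You do not argue it, and with $S_1,S_2$ fixed arbitrarily it cannot be argued. Take a vertex $x_0\in X\cap(S_1\setminus S_2)$ that closes an odd hole $x_0v_1\cdots v_nx_0$ with $v_1\in X\cap S_2$ and $v_2,\dots,v_n\in B\cap S_2$. This is compatible with claw-freeness and with your cutset structure. It can only be excluded by using the freedom to choose the transversals.

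\textbf{Route 2 restates the difficulty.} Minimality of $G$ gives perfect transversals of induced subgraphs, but no control over which vertices of $X$ they contain. Passing to $G[B\cup(X\cap S_1)]$ discards the maximum-weight cliques of $G_2$ through $X\setminus S_1$, and these must then be hit inside $B$ --- the step you leave open. Also, $S_2\cap X\subseteq S_1$ is not enough for clique gluing, which needs $S_1\cap X=S_2\cap X$.

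\textbf{The missing idea is an extremal choice combined with an exchange argument.}
\begin{enumerate}
\item The paper first shows $\omega_h(G_1)=\omega_h(G_2)=\omega_h(G)=r$.
\item It then shows neither $G_i$ is perfect. If $G_1$ were perfect, the division of $G_1$ with second part $B_2\cap X$ would agree with $(A_2,B_2)$ on $X$, and clique gluing would give an $h$-perfect division of $G$.
\item It picks $h$-perfect divisions $(A_i,B_i)$ of $G_i$ minimizing $|(A_1\cap B_2)\cup(A_2\cap B_1)|$, which is positive. Minimality forces every $x\in A_1\cap B_2$ to satisfy $\omega_h(G[B_1\cup\{x\}])=r$ and to make $G[A_2\cup\{x\}]$ imperfect; otherwise moving $x$ lowers the mismatch.
\item This yields an odd hole or antihole through some $x_0$ inside $X\cup V_2$.
\item \emph{Hole case:} claw-freeness forces all of $X$ to have the same clique neighbourhood $Q$ in $V_1$, so $X$ is simplicial in $G_1$. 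The weight-$r$ clique inside $B_1\cup\{x_0\}$ must then be $X\cup Q$. This puts the hole vertex $v_1\in X$ into $A_2\cap B_1$, and the exchange property of $v_1$ contradicts its simpliciality in $G_1$.
\item \emph{Antihole case} (length at least $7$): claw-freeness makes every vertex of $V_1$ complete to the $X$-vertices of $C$ other than $x_0$. So $V_1$ is a clique and $G_1$ is cobipartite, contradicting its imperfection.
\end{enumerate}
None of these ingredients --- equal maximum weights, imperfect sides, the minimal-mismatch choice, or the two structural arguments --- appears in your plan. As it stands, the proposal does not prove the lemma.
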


\begin{proof}
Suppose for a contradiction that $G$ has a clique cutset. Choose a clique cutset $X$ of minimum cardinality. There is a partition $(V_1,X,V_2)$ of $V(G)$ such that $V_1,V_2\neq\emptyset$ and $V_1$ is anticomplete to $V_2$. For $i\in[2]$, put $G_i=G[X\cup V_i]$, and let $r=\omega_h(G)$. Since $G$ is connected, each $G_i$ contains an edge; and since the other side is nonempty, each $G_i$ is a proper induced subgraph of $G$. Thus an $h$-perfect division of either $G_i$ exists whenever it is invoked below.

\begin{claim}\label{claim:clique-r}
$\omega_h(G_1)=\omega_h(G_2)=r$.
\end{claim} \vspace{-0.6em}

Suppose not. By symmetry, assume that $\omega_h(G_1)<r$. Since every clique of $G$ is contained in $G_1$ or $G_2$, we have $\omega_h(G_2)=r$. Since $G_2$ is a proper induced subgraph of $G$, it has an $h$-perfect division $(A_2,B_2)$. Thus $G[A_2]$ is perfect and $\omega_h(G_2[B_2])<r$. Every clique of $G-A_2$ is contained in $G_1$ or $G_2[B_2]$. Hence $\omega_h(G-A_2)\leq\max\{\omega_h(G_1),\omega_h(G_2[B_2])\}<r$. Therefore $(A_2,V(G)\setminus A_2)$ is an $h$-perfect division of $G$, a contradiction. This proves Claim~\ref{claim:clique-r}.\hfill\ensuremath{\blacksquare}

For each $i\in[2]$, let $(A_i,B_i)$ be an $h$-perfect division of $G_i$.

\begin{claim}\label{claim:clique-nonempty}
$(A_1\cap B_2)\cup(A_2\cap B_1)\neq\emptyset$.
\end{claim} \vspace{-0.6em}

Suppose otherwise. Then $A_1\cap B_2=A_2\cap B_1=\emptyset$, and hence $A_1\cap X=A_2\cap X$ and $B_1\cap X=B_2\cap X$. Therefore $(A_1\cup A_2,B_1\cup B_2)$ is a partition of $V(G)$.

We first show that $G[A_1\cup A_2]$ is perfect. Let $H$ be an induced subgraph of $G[A_1\cup A_2]$, and put $k=\omega(H)$. Since $G_i[A_i]$ is perfect, each $H[A_i]$ has a proper $k$-coloring. Their intersection is a clique contained in $X$, so the colors may be relabeled so that the two colorings agree on the intersection. Since $A_1\setminus X$ is anticomplete to $A_2\setminus X$, the two colorings combine to give a proper $k$-coloring of $H$. Thus $G[A_1\cup A_2]$ is perfect.

Every clique of $G[B_1\cup B_2]$ is contained in $G_1[B_1]$ or $G_2[B_2]$. Therefore $\omega_h(G[B_1\cup B_2])=\max\{\omega_h(G_1[B_1]),\omega_h(G_2[B_2])\}<r$. Hence $(A_1\cup A_2,B_1\cup B_2)$ is an $h$-perfect division of $G$, a contradiction. This proves Claim~\ref{claim:clique-nonempty}.\hfill\ensuremath{\blacksquare}

\begin{claim}\label{claim:clique-imperfect}
For each $i\in [2]$, the graph $G_i$ is not perfect.
\end{claim} \vspace{-0.6em}

Suppose not. By symmetry, assume that $G_1$ is perfect. Let $(A_2,B_2)$ be an $h$-perfect division of $G_2$. Put $B_1=B_2\cap X$ and $A_1=V(G_1)\setminus B_1$. Since $G_1$ is perfect, so is $G_1[A_1]$. Moreover, $\omega_h(G_1[B_1])\leq\omega_h(G_2[B_2])<r=\omega_h(G_1)$. Thus $(A_1,B_1)$ is an $h$-perfect division of $G_1$.

By the definitions of $A_1$ and $B_1$, we have $A_1\cap X=A_2\cap X$ and $B_1\cap X=B_2\cap X$. Since $V(G_1)\cap V(G_2)=X$, it follows that $A_1\cap B_2=A_2\cap B_1=\emptyset$, contrary to Claim~\ref{claim:clique-nonempty}. This proves Claim~\ref{claim:clique-imperfect}.\hfill\ensuremath{\blacksquare}

\begin{claim}\label{claim:clique-neighbor}
For every $x\in X$ and $i\in[2]$, the set $N_{V_i}(x)$ is a nonempty clique.
\end{claim} \vspace{-0.6em}

Fix $x\in X$. Suppose that $x$ has no neighbor in $V_i$ for some $i\in[2]$. If $X=\{x\}$, then $V_i$ is anticomplete to $V(G)\setminus V_i$, contrary to the connectedness of $G$. If $|X|\geq2$, then $X\setminus\{x\}$ is a clique cutset smaller than $X$, again a contradiction. Hence $N_{V_i}(x)\neq\emptyset$ for every $i\in[2]$.

Suppose that $N_{V_i}(x)$ is not a clique for some $i\in[2]$. By symmetry, assume that $N_{V_1}(x)$ is not a clique. Choose nonadjacent vertices $a,b\in N_{V_1}(x)$ and a vertex $c\in N_{V_2}(x)$. Since $V_1$ is anticomplete to $V_2$, the set $\{x,a,b,c\}$ induces a claw centered at $x$, a contradiction. This proves Claim~\ref{claim:clique-neighbor}.\hfill\ensuremath{\blacksquare}

Among all pairs of $h$-perfect divisions $(A_i,B_i)$ of $G_i$, for $i\in[2]$, choose one minimizing $d(A_1,B_1;A_2,B_2)=|(A_1\cap B_2)\cup(A_2\cap B_1)|$. By Claim~\ref{claim:clique-nonempty}, this number is positive.

\begin{claim}\label{claim:select}
Let $i,j\in[2]$ be distinct and let $x\in A_i\cap B_j$. Then
$\omega_h(G[B_i\cup\{x\}])=r$ and $G[A_j\cup\{x\}]$ is not perfect.
\end{claim} \vspace{-0.6em}

By symmetry, let $x\in A_1\cap B_2$. Since
$V(G_1)\cap V(G_2)=X$, we have $x\in X$. Suppose first that
$\omega_h(G[B_1\cup\{x\}])<r$. Since
$G[A_1\setminus\{x\}]$ is perfect,
$(A_1\setminus\{x\},B_1\cup\{x\})$ is an $h$-perfect division of
$G_1$. Only the membership of $x$ changes, from the first side of the
first division to its second side. Thus $x$ is removed from the set counted by $d$, and no new
vertex enters this set. Therefore
$d(A_1,B_1;A_2,B_2)$ decreases by one. This contradicts the choice of
the two divisions. Hence $\omega_h(G[B_1\cup\{x\}])=r$.

Suppose next that $G[A_2\cup\{x\}]$ is perfect. Since
$\omega_h(G[B_2\setminus\{x\}])\leq\omega_h(G[B_2])<r$, the pair
$(A_2\cup\{x\},B_2\setminus\{x\})$ is an $h$-perfect division of
$G_2$. Again only the membership of $x$ changes, now aligning the two
divisions on $x$, so the value of $d(A_1,B_1;A_2,B_2)$ decreases by
one. This is a contradiction. Therefore $G[A_2\cup\{x\}]$ is not
perfect. This proves Claim~\ref{claim:select}.\hfill\ensuremath{\blacksquare}

\medskip
By symmetry, assume that $A_1\cap B_2\neq\emptyset$, and choose $x_0\in A_1\cap B_2$. By Claim~\ref{claim:select}, $\omega_h(G[B_1\cup\{x_0\}])=r$ and $G[A_2\cup\{x_0\}]$ is not perfect. Since $G[A_2]$ is perfect, the Strong Perfect Graph Theorem implies that $G[A_2\cup\{x_0\}]$ contains an odd hole or an odd antihole $C$ with $x_0\in V(C)$. Since a $5$-antihole is also a $5$-hole, it is enough to consider the following two cases.

\begin{case}
$C$ is an odd hole.
\end{case}

Write $C=x_0v_1v_2\ldots v_nx_0$, where $n\geq4$ is even. The vertices $v_1$ and $v_n$ are nonadjacent neighbors of $x_0$. They cannot both belong to $X$, since $X$ is a clique, and they cannot both belong to $V_2$, since $N_{V_2}(x_0)$ is a clique by Claim~\ref{claim:clique-neighbor}. By reversing the cyclic order if necessary, assume that $v_1\in X$ and $v_n\in V_2$. Since $v_2,\ldots,v_{n-1}$ are nonadjacent to $x_0$, none of them belongs to $X$. Therefore $v_1\in X$ and $v_2,\ldots,v_n\in V_2$.

Put $Q=N_{V_1}(x_0)$. By Claim~\ref{claim:clique-neighbor}, $Q$ is a nonempty clique.

\begin{claim}\label{claim:common-neighborhood}
$N_{V_1}(x)=Q$ for every $x\in X$.
\end{claim} \vspace{-0.6em}

We first prove that $N_{V_1}(v_1)=Q$. If some $y\in Q$ is nonadjacent to $v_1$, then $\{x_0,y,v_1,v_n\}$ induces a claw centered at $x_0$. Hence $Q\subseteq N_{V_1}(v_1)$. Conversely, if some $y\in N_{V_1}(v_1)$ is nonadjacent to $x_0$, then $\{v_1,y,x_0,v_2\}$ induces a claw centered at $v_1$. Thus $N_{V_1}(v_1)=Q$.

Let $x\in X\setminus\{x_0,v_1\}$. We first show that $N_{V_1}(x)\subseteq Q$. Suppose otherwise, and choose $y\in N_{V_1}(x)\setminus Q$. By Claim~\ref{claim:clique-neighbor}, choose $z\in N_{V_2}(x)$ and $q\in Q$. Since $y\notin Q=N_{V_1}(x_0)$ and $N_{V_1}(v_1)=Q$, the vertex $y$ is nonadjacent to both $x_0$ and $v_1$.

If $z=v_2$, then $\{x,y,v_2,x_0\}$ induces a claw centered at $x$. Similarly, if $z=v_n$, then $\{x,y,v_n,v_1\}$ induces a claw centered at $x$. Hence $z\notin\{v_2,v_n\}$.

Since $V_1$ is anticomplete to $V_2$, we have $y\nsim z$. The sets $\{x,y,z,x_0\}$ and $\{x,y,z,v_1\}$ cannot induce claws, so $z$ is adjacent to both $x_0$ and $v_1$. Since $q$ is anticomplete to $V_2$, the sets $\{x_0,q,z,v_n\}$ and $\{v_1,q,z,v_2\}$ cannot induce claws. Therefore $z$ is adjacent to both $v_n$ and $v_2$.

Since $v_2\nsim v_n$, the set $\{z,x,v_2,v_n\}$ cannot induce a claw centered at $z$. Thus $x$ is adjacent to $v_2$ or $v_n$. In the first case, $\{x,y,v_2,x_0\}$ induces a claw centered at $x$; in the second case, $\{x,y,v_n,v_1\}$ induces a claw centered at $x$. Both are impossible. Hence $N_{V_1}(x)\subseteq Q$.

It remains to prove that $Q\subseteq N_{V_1}(x)$. Suppose that some $q'\in Q$ is nonadjacent to $x$. Choose $y\in N_{V_1}(x)$, which belongs to $Q$ by the preceding paragraph. Since neither $\{x_0,q',v_n,x\}$ nor $\{v_1,q',v_2,x\}$ induces a claw, the vertex $x$ is adjacent to both $v_n$ and $v_2$. It follows that $\{x,y,v_2,v_n\}$ induces a claw centered at $x$, a contradiction. Therefore $Q\subseteq N_{V_1}(x)$. This proves Claim~\ref{claim:common-neighborhood}.\hfill\ensuremath{\blacksquare}

\medskip
By Claim~\ref{claim:common-neighborhood}, $X\cup Q$ is a clique and every vertex of $X$ is simplicial in $G_1$. By Claim~\ref{claim:select}, $\omega_h(G[B_1\cup\{x_0\}])=r$. Hence there is a clique $K\subseteq B_1\cup\{x_0\}$ with $h(K)=r$. Since $\omega_h(G[B_1])<r$, we have $x_0\in K$. Therefore $K\subseteq N_{G_1}[x_0]=X\cup Q$.

Since $X\cup Q$ is a clique of $G_1$, we have $r=h(K)\leq h(X\cup Q)\leq\omega_h(G_1)=r$. Thus $h(K)=h(X\cup Q)$. All weights are positive, so $K=X\cup Q$. In particular, $X\setminus\{x_0\}\subseteq B_1$.

Now $v_1\in X\setminus\{x_0\}$, so $v_1\in B_1$. Since $V(C)\subseteq A_2\cup\{x_0\}$, we also have $v_1\in A_2$. Thus $v_1\in A_2\cap B_1$, and Claim~\ref{claim:select} implies that $G[A_1\cup\{v_1\}]$ is not perfect.

On the other hand, $v_1$ is simplicial in $G_1$. Since adding a simplicial vertex to a perfect graph preserves perfection, $G[A_1\cup\{v_1\}]$ is perfect, a contradiction.

\begin{case}
$C$ is an odd antihole of length at least seven.
\end{case}

Write $V(C)=\{u_0,u_1,\ldots,u_k\}$, where $u_0=x_0$, $k\geq6$ is even, and $u_0u_1\ldots u_ku_0$ is the corresponding cycle in $\overline{G[V(C)]}$. $\{u_1,u_k\}$ is anticomplete to $u_0$. Since $V(C)\subseteq X\cup V_2$, neither belongs to $X$. Thus $u_1,u_k\in V_2$.

Every vertex in $\{u_2,\ldots,u_{k-1}\}$ belongs to $X\cup V_2$ and is adjacent to $u_0$. Two consecutive vertices in this set cannot both belong to $X$, since they are nonadjacent. They cannot both belong to $V_2$, since $N_{V_2}(u_0)$ is a clique. Their memberships therefore alternate. By reversing the order if necessary, assume that $u_2,u_4,\ldots,u_{k-2}\in X$ and $u_1,u_3,\ldots,u_{k-1},u_k\in V_2$. Put $E=\{u_2,u_4,\ldots,u_{k-2}\}$.

We first show that every vertex outside $C$ has a neighbor in $C$. Suppose otherwise, and put $M=\{x\in V(G)\setminus V(C):N_G(x)\cap V(C)=\emptyset\}$. Since $G$ is connected, there are adjacent vertices $a\in M$ and $u\notin M$. The vertex $a$ is anticomplete to $C$, while $u\notin V(C)$ has a neighbor in $C$.

Relabel the vertices of $C$ as $c_0,c_1,\ldots,c_k$ so that $c_0c_1\ldots c_kc_0$ is the corresponding cycle in $\overline{G[V(C)]}$ and $u\sim c_0$. For $i\in\{0,\ldots,k\}$, let $s_i=1$ if $u\sim c_i$, and let $s_i=0$ otherwise.

The set $N_C(u)$ is a clique. Otherwise, two nonadjacent vertices $c_i,c_j\in N_C(u)$, together with $a$, form a claw centered at $u$. Hence no two consecutive entries in the cyclic sequence $s_0,s_1,\ldots,s_k$ are both $1$. In particular, $s_1=s_k=0$.

For every $i\in\{2,\ldots,k-2\}$, the entries $s_i$ and $s_{i+1}$ cannot both be $0$, since otherwise $\{c_0,u,c_i,c_{i+1}\}$ induces a claw centered at $c_0$. Therefore $s_2,s_3,\ldots,s_{k-1}$ alternate.

If $s_2=1$, then $s_{k-1}=0$, and $\{c_2,u,c_{k-1},c_k\}$ induces a claw centered at $c_2$. If $s_2=0$, then $s_{k-1}=1$, and $\{c_{k-1},u,c_1,c_2\}$ induces a claw centered at $c_{k-1}$. Both cases give a contradiction. Thus every vertex outside $C$ has a neighbor in $C$.

\begin{claim}\label{claim:antihole-complete}
Every vertex of $V_1$ is complete to $E$.
\end{claim} \vspace{-0.6em}

Let $y\in V_1$. By the preceding paragraph, $y$ has a neighbor in $C$. Since $V_1$ is anticomplete to $V_2$, every neighbor of $y$ in $C$ belongs to $E\cup\{u_0\}$. If $y$ is anticomplete to $E$, then $y\sim u_0$, and $\{u_0,y,u_2,u_3\}$ induces a claw centered at $u_0$. Hence $y\sim u_j$ for some $u_j\in E$.

Suppose that $j\leq k-4$ and $y\nsim u_{j+2}$. Since $u_{j+3}\in V_2$, we have $y\nsim u_{j+3}$. Therefore $\{u_j,y,u_{j+2},u_{j+3}\}$ induces a claw centered at $u_j$, a contradiction. Hence $y\sim u_{j+2}$.

Similarly, if $j\geq4$ and $y\nsim u_{j-2}$, then $y\nsim u_{j-3}$ because $u_{j-3}\in V_2$. Thus $\{u_j,y,u_{j-2},u_{j-3}\}$ induces a claw centered at $u_j$, again a contradiction. Hence $y\sim u_{j-2}$. Repeating these arguments shows that $y$ is complete to $E$. This proves Claim~\ref{claim:antihole-complete}.\hfill\ensuremath{\blacksquare}

\medskip
By Claim~\ref{claim:antihole-complete}, $u_2$ is complete to $V_1$. Since $N_{V_1}(u_2)$ is a clique by Claim~\ref{claim:clique-neighbor}, $V_1$ is a clique. Thus $V(G_1)=X\cup V_1$ is the union of two cliques. Hence $G_1$ is cobipartite and therefore perfect, contrary to Claim~\ref{claim:clique-imperfect}. This completes the proof of Lemma~\ref{lem:cutset}.
\end{proof}

We shall also use the following elementary fact about joining two
perfect graphs through a pair of cliques.

\begin{lemma}\label{lem:clique-attachment}
Let $G_1$ and $G_2$ be vertex-disjoint perfect graphs, and let $Q_i$
be a clique (possibly empty) of $G_i$ for each $i\in\{1,2\}$. Let $G$ be obtained from
the disjoint union of $G_1$ and $G_2$ by making $Q_1$ complete to
$Q_2$. Then $G$ is perfect.
\end{lemma}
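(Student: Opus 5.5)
The plan is to verify perfection through the Strong Perfect Graph Theorem (Theorem~\ref{thm:SPGT}): we show that $G$ contains no odd hole and no odd antihole. Equivalently, any such induced subgraph $C$ of $G$ must be contained entirely in $G_1$ or entirely in $G_2$. That would contradict the perfection of $G_1$ and $G_2$.

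Suppose $C$ is an odd hole or an odd antihole meeting both $V(G_1)$ and $V(G_2)$. Every edge of $G$ joining the two sides goes between $Q_1$ and $Q_2$. Put $C_i=V(C)\cap V(G_i)$. Since $C$ is connected (odd antiholes of length at least $5$ are connected), some edge crosses between the sides, so both $C_1\cap Q_1$ and $C_2\cap Q_2$ are nonempty.

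\emph{Odd hole.} Take $C=c_0c_1\ldots c_{n-1}c_0$ with $n\ge5$.
\begin{itemize}
\item If $C_1\setminus Q_1\neq\emptyset$, pick $a\in C_1\setminus Q_1$. Also pick $u\in C_1\cap Q_1$ and $w\in C_2\cap Q_2$, which are adjacent.
\item A vertex of $C_1\cap Q_1$ is adjacent to every vertex of $C_2\cap Q_2$, and vice versa. In a hole each vertex has only two neighbours, and the sets $C_i\cap Q_i$ are cliques of the hole, so each has size at most $2$.
\item If $|C_1\cap Q_1|=|C_2\cap Q_2|=2$, these four vertices induce $K_4$ minus nothing across, which contains a triangle. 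That is impossible in a hole of length at least $5$.
\item So one of them, say $C_1\cap Q_1=\{u\}$, is a single vertex. Then $u$ is a cut vertex of $C$ separating $C_1\setminus\{u\}$ from $C_2$ whenever $C_1\setminus\{u\}\neq\emptyset$, because the only cross edges are at $u$. A hole has no cut vertex, so $C_1=\{u\}$.
\item Then $u$ has two nonadjacent neighbours in $C_2$, both lying in $Q_2$, contradicting that $Q_2$ is a clique.
\item The symmetric case is the same.
\end{itemize}

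\emph{Odd antihole.} Let $|C|=n\ge7$; the case $n=5$ is a hole. In the complement $\overline{C}$, which is an odd cycle, every vertex of $C_1\setminus Q_1$ is nonadjacent in $G$ to all of $C_2$. So in $\overline{C}$ it is adjacent to all of $C_2$, and likewise with the roles of the sides exchanged. A vertex of $\overline{C}$ has degree $2$, so:
\begin{itemize}
\item If $C_1\setminus Q_1\neq\emptyset$, then $|C_2|\le2$.
\item A vertex of $C_2$ is adjacent in $\overline{C}$ to $n-3\ge4$ vertices of $C_1$, because its $\overline{C}$-degree is $2$ and at most one of its $\overline{C}$-neighbours lies in $C_2$. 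Those vertices must lie outside $Q_1$. Here we use that $C_1\cap Q_1$ is complete to $C_2\cap Q_2$, while $C_1\cap Q_1$ may be nonadjacent in $G$ to $C_2\setminus Q_2$.
\end{itemize}
The simplest route is to use that $C_i\cap Q_i$ is a clique of $C$, hence an independent set of $\overline{C}$. Counting $\overline{C}$-edges between $C_1$ and $C_2$ then forces one side to be very small, of size at most $2$. In an antihole of length at least $7$, any one or two vertices have a common $G$-neighbour outside them, which would also need to be cross-adjacent and thus lie in the $Q$'s, producing a contradiction.

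An alternative, cleaner route avoids the SPGT altogether. Colour $G_1$ and $G_2$ optimally for an induced subgraph $H$ with $k=\omega(H)$ colours, then permute the colours in $G_2$ so that the colours on $Q_2\cap H$ avoid those used on $Q_1\cap H$. This is possible when $|Q_1\cap H|+|Q_2\cap H|\le k$, which holds since $(Q_1\cup Q_2)\cap H$ is a clique of $H$.

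I would in fact present this second proof: it is short, and the only point to check is exactly that inequality. Permuting colours is free because $G_2$ is coloured independently, and the cross edges lie only between $Q_1$ and $Q_2$. So the combined colouring is proper and $\chi(H)=\omega(H)$ for every induced $H$. The main (minor) obstacle is just ensuring that the colouring of $H\cap G_2$ uses $k$ colours with distinct colours on the clique $Q_2\cap H$, which holds automatically.
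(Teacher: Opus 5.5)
Your second argument is correct and is essentially the paper's proof: colour $H\cap G_1$ and $H\cap G_2$ each with $k=\omega(H)$ colours, then permute the colours on the $G_2$ side so that $Q_2\cap H$ avoids the colours used on $Q_1\cap H$, which is possible because $(Q_1\cup Q_2)\cap H$ is a clique and so $|Q_1\cap H|+|Q_2\cap H|\le k$. The SPGT route you sketched first is unnecessary, and its antihole case was left incomplete, so presenting the colouring proof is the right choice.
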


\begin{proof}
Let $F$ be an induced subgraph of $G$, and put $k=\omega(F)$. For
$i\in\{1,2\}$, let $F_i=F[V(F)\cap V(G_i)]$ and
$D_i=V(F)\cap Q_i$. Since $G_i$ is perfect, $F_i$ has a proper
$k$-coloring. Moreover, $D_1\cup D_2$ is a clique of $F$, so
$|D_1|+|D_2|\leq k$. Relabel the colors in the two colorings so that $D_1$ and $D_2$ use
disjoint sets of colors. Since the only edges between $F_1$ and $F_2$
join $D_1$ to $D_2$, the two colorings together give a proper
$k$-coloring of $F$. Thus $\chi(F)\leq k=\omega(F)$, and hence $G$ is
perfect.
\end{proof}

\section{The structure theorem for claw-free graphs}
\label{sec:main-claw-free}
We first introduce the terminology used in the structural results of
Chudnovsky and Seymour~\cite{ChudnovskySeymour}. We then state the
 results, derive the form needed for graphs with no simplicial vertex
or clique cutset, and identify exactly which basic and stripe classes still remain to handle in our setting.

\subsection{Terminology and notation}
A \emph{trigraph} $T$ consists of a finite vertex set $V(T)$ and a map
$\theta_T\colon\binom{V(T)}{2}\to\{-1,0,1\}$ such that the pairs assigned
value $0$ form a matching. Distinct vertices $u,v\in V(T)$ are
\emph{strongly adjacent}, \emph{semiadjacent}, or
\emph{strongly antiadjacent} according as $\theta_T(\{u,v\})$ is $1$, $0$,
or $-1$. They are \emph{adjacent} if they are strongly adjacent or
semiadjacent, and \emph{antiadjacent} if they are strongly antiadjacent or
semiadjacent. Thus a semiadjacent pair is both adjacent and
antiadjacent~\cite[Section~2]{ChudnovskySeymour}.

For disjoint sets $A,B\subseteq V(T)$, we say that $A$ is \emph{complete}
to $B$ if every vertex of $A$ is adjacent to every vertex of $B$, and
\emph{strongly complete} to $B$ if all these pairs are strongly adjacent.
The terms \emph{anticomplete} and \emph{strongly anticomplete} are defined
similarly. A \emph{clique} is a set of pairwise adjacent vertices, and a
\emph{strong clique} is a set of pairwise strongly adjacent vertices. A set
is \emph{independent} if its vertices are pairwise antiadjacent, and
\emph{strongly stable} if its vertices are pairwise strongly antiadjacent.
A \emph{claw} consists of vertices $x,a,b,c$ such that $x$ is adjacent to
$a,b,c$, while $a,b,c$ are pairwise antiadjacent. A trigraph is
\emph{claw-free} if it contains no claw.

For $X\subseteq V(T)$, let $T[X]$ denote the induced subtrigraph on $X$, and
write $T-X=T[V(T)\setminus X]$. We view an ordinary graph as a trigraph in
which every edge is a strong adjacency and every nonedge is a strong
antiadjacency. Following Chudnovsky and Seymour~\cite[Section~2]{ChudnovskySeymour}, a
trigraph $H$ is a \emph{thickening} of a trigraph $T$ if $V(H)$ has a
partition $(X_v:v\in V(T))$ into nonempty strong cliques such that, for
distinct $u,v\in V(T)$,

\begin{enumerate}[label=\textup{$(\roman*)$}]
\item $X_u$ is strongly complete to $X_v$ if $u$ and $v$ are strongly
adjacent in $T$;

\item $X_u$ is strongly anticomplete to $X_v$ if $u$ and $v$ are strongly
antiadjacent in $T$;

\item $X_u$ is neither strongly complete nor strongly anticomplete to $X_v$
if $u$ and $v$ are semiadjacent in $T$.
\end{enumerate}
The sets $X_v$ are the \emph{bags} of the thickening, and $T$ is its
\emph{base trigraph}.

For $z\in V(T)$, let $N_T(z)$ be the set of vertices adjacent to $z$. The
vertex $z$ is \emph{simplicial} if $N_T(z)\cup\{z\}$ is a strong clique.
Following Chudnovsky and Seymour~\cite[Section~7]{ChudnovskySeymour}, a
\emph{stripe} is a pair $(T,Z)$, where $T$ is a claw-free trigraph and $Z$
is a strongly independent set of simplicial vertices such that no vertex of $T$
has two neighbors in $Z$. The vertices of $Z$ are the \emph{markers}.

Let $(T^0,Z^0)$ be a stripe, and let $G$ be a thickening of $T^0$ with bags
$(X_v:v\in V(T^0))$ such that $|X_z|=1$ for every $z\in Z^0$. Identifying
each marker with the unique vertex of its bag, we call $(G,Z^0)$ a
\emph{thickening} of $(T^0,Z^0)$. If $Z^0=\{z_1,z_2\}$, then
$S=G-Z^0$ is the \emph{piece} of the thickening, and
$A_i=N_G(z_i)\cap V(S)$ is its terminal corresponding to $z_i$, for
$i\in\{1,2\}$. The terminals are disjoint cliques and unions of whole bags.
The markers are auxiliary vertices and do not belong to the final
composition.

A \emph{spot} has vertices $x,z_1,z_2$, where $x$ is strongly adjacent to
$z_1,z_2$, and $z_1,z_2$ are strongly antiadjacent. Its markers are
$z_1,z_2$. A spot is not a stripe because $x$ has two neighbors in the
marker set~\cite[Section~8]{ChudnovskySeymour}.

Chudnovsky and Seymour~\cite[Section~3]{ChudnovskySeymour} introduced eight
basic classes, denoted by $\cS_0,\ldots,\cS_7$. In the form of their structure
theorem used here, $\cS_0$, $\cS_2$, $\cS_4$, $\cS_5$, and $\cS_6$ are
handled by the three-clique or strip-structure outcomes. Only
thickenings of members of
$\cS_1\cup\cS_3\cup\cS_7$ occur as a separate basic
outcome~\cite[Theorem~7.2]{ChudnovskySeymour}. The classes $\cS_1$, $\cS_3$,
and $\cS_7$ are, respectively, the icosahedral, long circular interval, and
antiprismatic classes. Their definitions will be given in
Section~\ref{sec:basic-classes}.

Chudnovsky and Seymour~\cite[Section~7]{ChudnovskySeymour} also defined
fifteen stripe classes $\cZ_1,\ldots,\cZ_{15}$ and the subclass $\cZ_0$ of
members that are not thickenings of smaller members of these classes. The
marker specifications in their definitions show that members of
$\cZ_1,\ldots,\cZ_4$ have two markers, members of
$\cZ_6,\ldots,\cZ_{15}$ have one marker, and members of $\cZ_5$ have one or
two markers. Consequently, every member of $\cZ_0$ with two markers belongs
to one of $\cZ_1,\ldots,\cZ_5$.

We next recall strip-structures. Let $T$ be a trigraph, let
$Y\subseteq V(T)$, and let $X_1,\ldots,X_k\subseteq Y$. Following
Chudnovsky and Seymour~\cite[Section~7]{ChudnovskySeymour}, the family
$(X_i:1\leq i\leq k)$ is a \emph{circus in $Y$} if

\begin{enumerate}[label=\textup{(CS\arabic*)}]
\item for every $i$ and every $x\in X_i$, the neighbors of $x$ in
$Y\setminus X_i$ form a strong clique;

\item for $i<j$, the set $X_i\cap X_j$ is strongly anticomplete to
$Y\setminus(X_i\cup X_j)$;

\item no three distinct members of the family have a common vertex.
\end{enumerate}

A hypergraph $\mathcal H$ consists of a finite vertex set, a finite nonempty
edge set, and an incidence relation. The vertices incident with a hyperedge
$F$ are its \emph{ends}. For $r\in V(\mathcal H)$, let
$E_{\mathcal H}(r)$ denote the set of hyperedges having end $r$. A
\emph{strip-structure} $(\mathcal H,\eta)$ of $T$ assigns a set
$\eta(F)\subseteq V(T)$ to every $F\in E(\mathcal H)$ and a set
$\eta(F,r)\subseteq\eta(F)$ to every incidence of $F$ and an end $r$, such
that

\begin{enumerate}[label=\textup{(SD\arabic*)}]
\item the sets $\eta(F)$ are nonempty, pairwise disjoint, and have union
$V(T)$;

\item for every $r\in V(\mathcal H)$, the set
$H_r=\bigcup_{F\in E_{\mathcal H}(r)}\eta(F,r)$ is a strong clique;

\item if $F_1\neq F_2$, $x_i\in\eta(F_i)$ for $i\in\{1,2\}$, and $x_1$ and
$x_2$ are adjacent, then $F_1,F_2$ have a common end $r$ such that
$x_i\in\eta(F_i,r)$ for $i\in\{1,2\}$;

\item for every $F$, the family $(\eta(F,r):r\text{ is an end of }F)$ is a
circus in $\eta(F)$.
\end{enumerate}

If the ends of $F$ are $r_1,\ldots,r_k$, its \emph{associated strip} is
$(J,Z)$, where $J$ is obtained from $T[\eta(F)]$ by adding new vertices
$Z=\{z_1,\ldots,z_k\}$, and $z_i$ is strongly complete to
$\eta(F,r_i)$ and strongly anticomplete to
$V(J)\setminus(\{z_i\}\cup\eta(F,r_i))$. Thus the number of markers equals
the number of ends of $F$. The strip-structure is \emph{nontrivial} if it has
at least two hyperedges.

The \emph{nullity} of $(\mathcal H,\eta)$ is the number of incidences
$(F,r)$ for which $\eta(F,r)=\emptyset$. For a fixed trigraph $T$, a
strip-structure is \emph{optimal} if it has the maximum number of hyperedges
and, subject to this, minimum nullity. It is \emph{purified} if, for each
hyperedge $F$, either its attachment sets $\eta(F,r)$ are pairwise disjoint,
or $F$ has two ends $r_1,r_2$, $|\eta(F)|=1$, and
$\eta(F,r_1)=\eta(F,r_2)=\eta(F)$~\cite[Section~8]{ChudnovskySeymour}.

We finish the terminology by defining the graph composition used below. A
\emph{loopless multigraph} may have parallel edges but has no loops. Let $R$
be a loopless multigraph with at least two edges, and, for $r\in V(R)$, let
$E_R(r)$ denote the set of edges incident with $r$. A graph $G$ is a
\emph{composition over $R$} if its vertex set has a partition
$(V_e:e\in E(R))$ satisfying the following conditions.

\begin{enumerate}[label=\textup{$(\roman*)$}]
\item If $e=rs$ is a \emph{spot edge}, then $V_e=\{p_e\}$ and
$A_e^r=A_e^s=\{p_e\}$.

\item If $e=rs$ is not a spot edge, then $G[V_e]$ is a piece of type
$\cZ_1,\ldots,\cZ_5$ with nonempty disjoint terminals $A_e^r$ and $A_e^s$.

\item For every $r\in V(R)$, the set
$H_r=\bigcup_{e\in E_R(r)}A_e^r$ is a clique.

\item If $e\neq f$, $x\in V_e$, and $y\in V_f$, then $xy\in E(G)$ if and
only if $e$ and $f$ have a common end $r$ such that $x\in A_e^r$ and
$y\in A_f^r$.
\end{enumerate}

The clique $H_r$ is the \emph{hub} at $r$.

\subsection{The structure theorem of claw-free graphs of Chudnovsky and Seymour}

We use the following form of the global structure theorem of Chudnovsky and
Seymour~\cite[Theorems~7.2 and~8.1 and the proofs of Theorems~7.2
and~9.1]{ChudnovskySeymour}. In these proofs, the strip-structure used in
Theorem~7.2 is chosen optimal; Theorem~8.1 then gives nullity zero and
purification.

\begin{theorem}[Chudnovsky--Seymour~\cite{ChudnovskySeymour}]
\label{thm:cs-reduction}
Let $T$ be a connected claw-free trigraph whose vertex set is not the union
of three strong cliques. Then one of the following holds:

\begin{enumerate}[label=\textup{$(\roman*)$}]
\item $T$ is a thickening of a member of
$\cS_1\cup\cS_3\cup\cS_7$;

\item $T$ has an optimal nontrivial strip-structure with nullity zero that
is purified, and every associated strip $(J,Z)$ satisfies
$1\leq |Z|\leq2$ and is either a three-vertex strip with $|Z|=2$, or a
thickening of a member of $\cZ_0$.
\end{enumerate}
\end{theorem}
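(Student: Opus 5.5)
Theorem~\ref{thm:cs-reduction} is quoted from Chudnovsky and Seymour rather than proved afresh, so my plan is to assemble it from their Theorems~7.2, 8.1 and 9.1, checking that the hypotheses and the extra properties (optimal, nullity zero, purified, $1\le|Z|\le2$) really come out of their arguments.

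\textbf{Step 1: apply Theorem~7.2.} Start from a connected claw-free trigraph $T$ whose vertex set is not the union of three strong cliques. Their Theorem~7.2 gives two outcomes. Either $T$ is a thickening of a member of $\cS_1\cup\cS_3\cup\cS_7$, which is outcome $(i)$. Or $T$ admits a nontrivial strip-structure in which every associated strip is a spot-type three-vertex strip or a thickening of a member of $\cZ_0$.

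\textbf{Step 2: optimality is free.} Inspecting the proof of Theorem~7.2, the strip-structure there is chosen optimal: first maximize the number of hyperedges, then minimize nullity. Keeping this choice costs nothing, because the conclusion about the associated strips is derived for an optimal structure.

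\textbf{Step 3: nullity zero and purification.} Theorem~8.1 says that an optimal strip-structure of a connected claw-free trigraph with no three-strong-clique cover has nullity zero and is purified. This is where connectivity and the non-three-clique hypothesis are used. Nullity zero means $\eta(F,r)\neq\emptyset$ for every incidence. Each marker $z_i$ is then complete to a nonempty set, so $|Z|$ equals the number of ends of $F$.

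\textbf{Step 4: bound the number of ends.} The bound $|Z|\le 2$ comes from the proof of Theorem~9.1. There, hyperedges with three or more ends are excluded in this setting. A strip with at least three markers would yield a claw or a vertex with two marker neighbours, contradicting the stripe definition, except in configurations already covered by the basic outcome. The bound $|Z|\ge1$ holds because a hyperedge with no ends would, by (SD3), make $\eta(F)$ a union of components, contradicting connectivity and nontriviality.

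\textbf{Step 5: identify the three-vertex strips.} In the purified case with $|\eta(F)|=1$ and both attachments equal to $\eta(F)$, the associated strip has exactly three vertices and two markers. This is the spot, and it gives the stated dichotomy.

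\textbf{Main obstacle.} The delicate point is bookkeeping across the three source theorems. One must confirm that the same optimal structure simultaneously satisfies the conclusion of Theorem~7.2, the hypotheses of Theorem~8.1, and the end-count bound from Theorem~9.1's proof. I would handle this by fixing a single optimal strip-structure at the outset and verifying each property for that fixed structure.
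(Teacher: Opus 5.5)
Your route matches the paper's. The paper gives no independent proof: it cites Theorems~7.2 and~8.1 of Chudnovsky--Seymour together with the proofs of Theorems~7.2 and~9.1. It takes the optimal strip-structure from the proof of~7.2 and applies~8.1 to get nullity zero and purification. Your Steps~1--3 and~5 reproduce this, and your (SD3)-plus-connectivity argument for $|Z|\geq 1$ is correct.

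The justification you give for $|Z|\leq 2$ in Step~4 does not work, because three markers force neither a claw nor a vertex with two marker neighbours. Take $\eta(F)=\{a,b,c\}$ to be a strong triangle with attachment sets $\{a\}$, $\{b\}$, $\{c\}$. Then the circus axioms hold. The associated strip is a claw-free stripe with three markers, each of them simplicial, and no vertex is adjacent to two of them.

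Such a hyperedge is excluded by optimality, not by claw-freeness. It can be replaced by three one-vertex hyperedges meeting at a new hub $\{a,b,c\}$, which increases the number of hyperedges. In other words, it is excluded by the strip classification that comes out of the proof of Theorem~7.2.

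You need no separate argument here. Once every strip is either a three-vertex strip with two markers or a thickening of a member of $\cZ_0$, the bound $1\leq|Z|\leq 2$ is automatic. Every member of $\cZ_0$ has one or two markers, and a thickening of a stripe has the same marker set, since its marker bags are singletons.

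A smaller point, in Step~3: $|Z|$ equals the number of ends of $F$ by the definition of the associated strip. Nullity zero is not what gives this.
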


\subsection{Reduction of claw-free graphs with no simplicial vertex and no clique cutset}

We now derive the form used in the proof of the main theorem.

\begin{theorem}\label{thm:structure}
Let $G$ be a connected claw-free graph with no simplicial vertex and no
clique cutset. Then at least one of the following holds:

\begin{enumerate}[label=\textup{$(\roman*)$}]
\item $V(G)$ is the union of three cliques;

\item $G$ is a thickening of a member of
$\cS_1\cup\cS_3\cup\cS_7$;

\item $G$ is a composition over a loopless multigraph with at least two
edges, and every non-spot piece has type $\cZ_1,\ldots,\cZ_5$.
\end{enumerate}
\end{theorem}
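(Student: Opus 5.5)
We view $G$ as a trigraph $T$ with no semiadjacent pairs and apply Theorem~\ref{thm:cs-reduction}. If $V(G)$ is the union of three cliques, outcome $(i)$ holds. Otherwise $T$ is connected, claw-free, and not the union of three strong cliques. So either $G$ is a thickening of a member of $\cS_1\cup\cS_3\cup\cS_7$, giving $(ii)$, or $G$ has an optimal nontrivial purified strip-structure $(\mathcal H,\eta)$ with nullity zero, in which every strip has one or two markers and is a spot-like three-vertex strip or a thickening of a member of $\cZ_0$. The work is to turn this last alternative into $(iii)$.

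The first step is to exclude one-marker strips. Let $F$ have a single end $r$. Then $\eta(F,r)$ is nonempty (nullity zero), and by (SD3) every edge between $\eta(F)$ and the rest of $G$ goes through $\eta(F,r)\subseteq H_r$. If $\eta(F)\setminus\eta(F,r)\neq\emptyset$, then $\eta(F,r)$ is a clique separating it from $V(G)\setminus\eta(F)$. The latter set is nonempty because the structure has at least two hyperedges and the $\eta$-sets partition $V(G)$. This gives a clique cutset, a contradiction. If instead $\eta(F)=\eta(F,r)$, then each $x\in\eta(F)$ has $N[x]\subseteq H_r$ by (SD3) and (CS1) with a single member. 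Since $H_r$ is a clique, $x$ is simplicial, again a contradiction.

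It follows that every hyperedge has exactly two ends. Let $R$ be the multigraph on $V(\mathcal H)$ in which each hyperedge becomes an edge; it has no loops since the two ends are distinct, and it has at least two edges by nontriviality. Put $V_e=\eta(F)$ and $A_e^{r_i}=\eta(F,r_i)$. The three-vertex strips with $|Z|=2$ come from purification as single vertices with $\eta(F,r_1)=\eta(F,r_2)=\eta(F)$, so they are exactly the spot edges. For every other hyperedge, purification makes the terminals disjoint and nullity zero makes them nonempty. The associated strip is a thickening of a member of $\cZ_0$ with two markers, so it lies in $\cZ_1\cup\cdots\cup\cZ_5$ by the remark after the definition of $\cZ_0$. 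The marker bags have size one because the markers are simplicial and adjoined singly. Hence $G[V_e]$ is a piece of one of these types with terminals $A_e^r$ and $A_e^s$. Conditions (iii) and (iv) of the composition come directly from (SD2) and (SD3), with the converse direction of (iv) given by the hub $H_r$ being a clique. This verifies $(iii)$.

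The main obstacle is checking that the strips produced by Chudnovsky--Seymour match the paper's notion of piece. One point is that, in a graph, the base trigraph's semiadjacencies are realized only inside the thickening. Another is that a two-marker member of $\cZ_5$ must really be counted as a two-marker piece. I would take both directly from the cited definitions.
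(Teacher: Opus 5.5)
Your proposal is correct and follows the paper's proof essentially step for step: you apply the Chudnovsky--Seymour reduction, rule out one-end hyperedges by the clique-cutset versus simplicial-vertex dichotomy on $\eta(F,r)$, and then read off the composition from nullity zero, purification and the strip-structure axioms. Two small remarks: you make explicit the converse direction of the cross-edge rule via (SD2), which the paper leaves implicit; and a three-vertex strip is a spot already because nullity zero forces both attachment sets to equal its single vertex, so purification is not needed for that step.
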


\begin{proof}
Suppose that $V(G)$ is not the union of three cliques. Apply
Theorem~\ref{thm:cs-reduction}. Its first outcome gives $(ii)$. We may
therefore assume that $G$ has an optimal nontrivial strip-structure
$(\mathcal H,\eta)$ as in its second outcome.

We first show that every hyperedge of $\mathcal H$ has two ends. Suppose
that a hyperedge $F$ has only one end $r$, and put $V_F=\eta(F)$ and
$A_F=\eta(F,r)$. Since the nullity is zero, $A_F\neq\emptyset$. Moreover,
$A_F$ is contained in the hub $H_r$ and hence is a clique. All edges between
$V_F$ and $V(G)\setminus V_F$ have their end in $V_F$ inside $A_F$. Since
the strip-structure is nontrivial, $V(G)\setminus V_F\neq\emptyset$.

If $V_F\setminus A_F\neq\emptyset$, then $A_F$ is a clique cutset separating
$V_F\setminus A_F$ from $V(G)\setminus V_F$, a contradiction. Hence
$V_F=A_F$. For every $x\in V_F$, all neighbors of $x$ lie in $H_r$. Since
$x\in H_r$ and $H_r$ is a clique, $N_G[x]$ is a clique. Thus $x$ is
simplicial, again a contradiction. Therefore every hyperedge has two ends.

It follows that $\mathcal H$ is a loopless multigraph with at least two
edges. Every associated strip has two markers. A three-vertex strip is a
spot, since its unique non-marker vertex belongs to both nonempty attachment
sets. Every other strip is a thickening of a two-marker member of $\cZ_0$,
and hence of a member of $\cZ_1,\ldots,\cZ_5$. The strip-structure axioms now
give exactly the partition, hubs, and cross-edge rule in the definition of a
composition. Thus $(iii)$ holds. This completes the proof of
Theorem~\ref{thm:structure}.
\end{proof}

\section{The union of cliques and thickenings}
\label{sec:basic-classes}
In this section, we treat outcomes $(i)$ and $(ii)$ of
Theorem~\ref{thm:structure}. Thus we will handle graphs whose vertex set is
the union of three cliques and then consider thickenings of members of
$\cS_1$, $\cS_3$, and $\cS_7$.
We find a perfect$\Omega_G$-transversal when the vertex set  is covered by three
cliques.

\begin{lemma}\label{lem:three-cliques}
Let $G$ be a graph with at least one edge. If $V(G)$ is the union of three cliques, then $G$ has a perfect $\Omega_G$-transversal.
\end{lemma}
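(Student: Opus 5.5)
The plan is to exploit that any two of the three cliques induce a cobipartite graph, and that the complement of a bipartite graph is perfect (by Theorem~\ref{thm:SPGT}, or by K\H{o}nig's theorem). First replace the three covering cliques by a partition $V(G)=K_1\mathbin{\dot\cup}K_2\mathbin{\dot\cup}K_3$ into cliques, some possibly empty. Among all such partitions, fix one minimizing $|K_3|$, and subject to that minimizing $|K_2|$. For each pair $\{i,j\}$, the set $K_i\cup K_j$ induces a perfect graph. A maximum clique $Q$ missing $K_i\cup K_j$ must lie inside the third clique $K_k$. Since $K_k$ is itself a clique and $Q$ is maximum, this forces $Q=K_k$ and $|K_k|=\omega(G)$. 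Hence $S=K_i\cup K_j$ is a perfect $\Omega_G$-transversal unless $K_k$ is itself a maximum clique.

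Apply this with $\{i,j\}=\{1,2\}$. We are done unless $|K_3|=\omega(G)$. In that case minimality of $|K_3|$ forces $|K_1|=|K_2|=|K_3|=\omega(G)$. Indeed, each $|K_i|\le\omega(G)$, and if some $|K_i|<\omega(G)$ we could swap the names of $K_i$ and $K_3$, contradicting the choice. So the only remaining case is the tight one: $|V(G)|=3\omega(G)$ and each $K_i$ is a maximum clique. Every maximum clique $Q$ then satisfies $\sum_i|Q\cap K_i|=\omega(G)$. We need a perfect set $S$ meeting $K_1$, $K_2$, $K_3$ and every other maximum clique.

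For this tight case I would try $S=K_1\cup K_2\cup\{x\}$ for a suitably chosen $x\in K_3$. Such an $S$ meets every maximum clique, because the only maximum clique avoiding $K_1\cup K_2$ is $K_3$, and $x\in K_3$. The remaining task is perfection of $G[K_1\cup K_2\cup\{x\}]$. By Theorem~\ref{thm:SPGT} an obstruction must contain $x$.

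An odd hole through $x$ uses at most two vertices from each clique, so it is a $C_5$ with two vertices in $K_1$, two in $K_2$ and $x$. An odd antihole through $x$ is covered by the cliques $K_1$, $K_2$ and $\{x\}$. Deleting $x$ from an antihole of length $2m+1\ge7$ leaves the complement of a path, which cannot be covered by two cliques when $m\ge3$. Hence only $C_5$ through $x$ can occur, where $x$ is adjacent to one vertex of $K_1$ and one of $K_2$ in a fixed alternating pattern.

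The main obstacle is showing that some $x\in K_3$ lies in no such $C_5$, or else finding another choice. I would try the following exchange argument. Suppose every $x\in K_3$ lies in a $C_5$ of the form $x\,a\,a'\,b'\,b$ with $a,a'\in K_1$, $b,b'\in K_2$, $x\sim a$, $x\sim b$, $a\sim a'$, $a'\sim b'$, $b'\sim b$, and no other adjacencies. Then I would aim to use the adjacency patterns to re-partition $V(G)$ into three cliques that contradicts the minimality choices, or to show directly that some $K_i\cup K_j$ plus one vertex of $K_k$ is perfect for a different pair. If the exchange argument resists, the fallback is to choose $S$ as $K_1$ together with a maximum clique of $G-K_1$ and argue symmetrically.
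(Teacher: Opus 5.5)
\textbf{There is a genuine gap: the tight case is left open, and your planned route for it cannot be completed.} Your treatment of the non-tight case is correct. If some $K_k$ is not a maximum clique, then the cobipartite set $K_i\cup K_j$ is already a perfect $\Omega_G$-transversal.

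The tight case fails in general. Take $G$ on $\mathbb{Z}_9$ with $i\sim j$ if and only if $i-j\in\{\pm1,\pm3\}$.
\begin{itemize}
\item Its only triangles are $\{0,3,6\}$, $\{1,4,7\}$ and $\{2,5,8\}$. So $\omega(G)=3$, and this is the unique partition of $V(G)$ into three cliques, all of them maximum. No exchange or re-partition argument is available.
\item The cycle $5\,6\,0\,1\,4$ is an induced $C_5$. It consists of $x=5$, two vertices of $\{0,3,6\}$ and two of $\{1,4,7\}$.
\item The automorphism $i\mapsto i+3$ fixes each triangle setwise, and $i\mapsto i+1$ permutes the triangles cyclically. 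Hence for every pair of cliques and every vertex $x$ of the third, $K_i\cup K_j\cup\{x\}$ contains an induced $C_5$ and is not perfect.
\item Your fallback ($K_1$ plus a maximum clique of $G-K_1$) returns $K_1\cup K_2$ or $K_1\cup K_3$. Each of these misses the remaining maximum clique.
\end{itemize}

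Separately, your claim that odd antiholes of length at least $7$ cannot occur in $K_1\cup K_2\cup\{x\}$ is false. Deleting a vertex of $\overline{C_{2m+1}}$ leaves $\overline{P_{2m}}$. Its vertex set is covered by two cliques, namely the two colour classes of the bipartite graph $P_{2m}$. For example, $\overline{C_7}$ is the union of the cliques $\{0\}$, $\{1,3,5\}$ and $\{2,4,6\}$, labelling along the $7$-cycle of the complement.

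\textbf{The missing idea is to keep only the non-neighbours of $x$ rather than all of $K_1\cup K_2$.} For any vertex $x$ of one of the cliques, put $S=\{x\}\cup M_G(x)$.
\begin{itemize}
\item $M_G(x)$ lies in the other two cliques, so it is cobipartite and hence perfect.
\item $x$ is isolated in $G[S]$, so $G[S]$ is perfect.
\item A maximum clique missing $S$ would lie in $N_G(x)$ and could be extended by $x$, which is impossible. So $S$ meets every maximum clique.
\end{itemize}
This is the paper's proof. It needs no tight/non-tight case split and no choice of partition. In the example above, $x=5$ gives $S=\{5,0,1,3,7\}$, which induces the path $3\,0\,1\,7$ plus an isolated vertex and meets all three triangles.
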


\begin{proof}
Let $V(G)=A\cup B\cup C$, where $A,B,C$ are cliques. We may assume that $A\neq\emptyset$, and choose $x\in A$. Since $x$ is adjacent to every vertex of $A\setminus\{x\}$, we have $M_G(x)\subseteq B\cup C$. Thus $M_G(x)$ is the union of two cliques, so $G[M_G(x)]$ is perfect.

Put $S=M_G(x)\cup\{x\}$. Since $x$ is anticomplete to $M_G(x)$, the graph $G[S]$ is perfect. If some $K\in\Omega_G$ misses $S$, then $K\subseteq N_G(x)$, and hence $K\cup\{x\}$ is a clique larger than $K$, a contradiction. Therefore $S$ is a perfect $\Omega_G$-transversal.
\end{proof}

We next need some tools for the three basic thickening classes in
outcome~\textup{(ii)} of Theorem~\ref{thm:structure}. We first recall
linear interval trigraphs. Following Chudnovsky and
Seymour~\cite[Section~7]{ChudnovskySeymour}, a trigraph $T$ is a
\emph{linear interval trigraph} if its vertices admit an ordering
$v_1,\ldots,v_n$ such that, whenever $1\leq i<j<k\leq n$ and $v_i$
is adjacent to $v_k$, the vertex $v_j$ is strongly adjacent to both
$v_i$ and $v_k$. Restricting such an ordering to any subset of
$V(T)$ shows that every induced subtrigraph of a linear interval
trigraph is again a linear interval trigraph.
Figure~\ref{fig:linear-interval} shows a linear interval trigraph.

\begin{figure}[H]
    \centering
    \tikzstyle{v}=[circle, draw, fill=black, inner sep=0pt,
    minimum size=5pt]

    \begin{tikzpicture}[
        scale=1.0,
        baseline=(current bounding box.center)
    ]
        \useasboundingbox (-3.30,-1.05) rectangle (3.30,1.45);

        \node[v] (v1) at (-2.50,0) {};
        \node[v] (v2) at (-1.50,0) {};
        \node[v] (v3) at (-0.50,0) {};
        \node[v] (v4) at (0.50,0) {};
        \node[v] (v5) at (1.50,0) {};
        \node[v] (v6) at (2.50,0) {};

        \node at (-2.50,-0.38) {$v_1$};
        \node at (-1.50,-0.38) {$v_2$};
        \node at (-0.50,-0.38) {$v_3$};
        \node at (0.50,-0.38) {$v_4$};
        \node at (1.50,-0.38) {$v_5$};
        \node at (2.50,-0.38) {$v_6$};

        % Strong adjacencies between consecutive vertices
        \draw[blue, line width=0.9pt]
            (v1)--(v2)
            (v2)--(v3)
            (v3)--(v4)
            (v4)--(v5)
            (v5)--(v6);

        % Semiadjacency
        \draw[blue, dashed, line width=0.9pt]
            (v1) to[bend left=30] (v3);

        % Strong adjacency
        \draw[blue, line width=0.9pt]
            (v3) to[bend left=30] (v5);

        \node at (0,-0.85)
            {$v_1<v_2<v_3<v_4<v_5<v_6$};
    \end{tikzpicture}

    \caption{A linear interval trigraph. Solid edges represent strong
    adjacencies, the dashed edge $v_1v_3$ represents a semiadjacency,
    and all unrepresented pairs are strongly antiadjacent.}
    \label{fig:linear-interval}
\end{figure}

To connect these trigraphs with perfect graphs, we use the standard trigraph
conventions of Chudnovsky and Seymour~\cite[Section~2]
{ChudnovskySeymour}. Let $k\geq4$. A \emph{hole} in a trigraph $T$ is an ordering
$v_1,\ldots,v_k$ of distinct vertices such that consecutive vertices,
with indices taken modulo $k$, are adjacent and all other pairs are
antiadjacent. An \emph{antihole} is defined in the same way with
adjacency and antiadjacency interchanged. A trigraph is \emph{Berge} if
it contains no odd hole and no odd antihole of length at least five.
By Theorem~\ref{thm:SPGT}, an ordinary graph is perfect if and only if
it is Berge. Chudnovsky and Plumettaz~\cite[Lemmas~5.3 and~6.4]
{ChudnovskyPlumettaz} proved that every linear interval trigraph is
Berge and that the Berge property is preserved under thickening.

\begin{lemma}[Chudnovsky-- Plumettaz~\cite{ChudnovskyPlumettaz}]\label{lem:linear-thickening}
Every graph that is a thickening of a linear interval trigraph is
perfect.
\end{lemma}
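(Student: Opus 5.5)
The plan is to reduce to Berge-ness and use the Strong Perfect Graph Theorem (Theorem~\ref{thm:SPGT}). Let $G$ be a graph that is a thickening of a linear interval trigraph $T$ with bags $(X_v:v\in V(T))$. Since $G$ is an ordinary graph, it is perfect as soon as it contains no odd hole and no odd antihole. We then transfer any such configuration in $G$ back to $T$ and contradict the fact that linear interval trigraphs contain none.

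The first step is that $T$ itself is Berge. Fix an ordering $v_1,\ldots,v_n$ witnessing that $T$ is a linear interval trigraph.
\begin{itemize}
\item \textbf{Odd holes.} Suppose $T$ has a hole $C$ of length $k\geq4$. Let $v_i$ be the vertex of $C$ that is first in the ordering, with hole-neighbours $v_j,v_k$, where $i<j<k$. Since $v_i$ is adjacent to $v_k$, the axiom makes $v_j$ strongly adjacent to $v_k$. But $v_j$ and $v_k$ are nonconsecutive on $C$ when $k\geq4$, so they must be antiadjacent; a strong adjacency is not an antiadjacency, a contradiction.
\item \textbf{Odd antiholes.} An antihole of length at least $7$ contains an induced $C_4$-complement-type configuration, namely two disjoint antiadjacent pairs that are otherwise adjacent. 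One checks that the interval axiom forbids this. Combined with the hole case, which also excludes $C_5=\overline{C_5}$, this shows $T$ is Berge.
\end{itemize}

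The second step, which I expect to be the main obstacle, is transferring holes and antiholes from the thickening $G$ to $T$. Take an odd hole or odd antihole $C$ in $G$.
\begin{itemize}
\item Each bag is a strong clique, and two vertices of one bag have the same strong relations to all other bags. Hence two vertices of $C$ in the same bag would be adjacent twins with respect to $C$ outside semiadjacent bag pairs.
\item For $|C|\geq5$, no hole or antihole contains a pair of adjacent vertices with identical neighbourhoods in $C$. The subtle case is when one of the bags involved is semiadjacent to some bag, so the relations may differ. I would handle this by noting that each vertex of $T$ has at most one semiadjacent partner (semiadjacent pairs form a matching), and then doing a short case check showing that $C$ still meets each bag at most once.
\item With at most one vertex per bag, $C$ maps injectively to vertices of $T$. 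Adjacency in $G$ implies adjacency in $T$, and nonadjacency implies antiadjacency, since a semiadjacent bag pair allows both. So the image is an odd hole or odd antihole of $T$, a contradiction.
\end{itemize}

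Alternatively, the cleanest route is simply to invoke Chudnovsky--Plumettaz~\cite[Lemmas~5.3 and~6.4]{ChudnovskyPlumettaz}. These give that linear interval trigraphs are Berge and that thickenings of Berge trigraphs are Berge. Then $G$ is a Berge ordinary graph, hence perfect by Theorem~\ref{thm:SPGT}. This is the argument I would actually write, with the sketch above justifying the two cited lemmas.
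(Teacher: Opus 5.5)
Your final argument, citing Chudnovsky--Plumettaz~\cite[Lemmas~5.3 and~6.4]{ChudnovskyPlumettaz} for the facts that linear interval trigraphs are Berge and that thickening preserves the Berge property and then applying Theorem~\ref{thm:SPGT}, is exactly how the paper justifies this lemma. Your supplementary sketch is also sound: an antihole of length at least six contains a four-vertex hole, which your first-vertex argument already excludes, and an odd hole or antihole meeting a bag twice is ruled out through the unique semiadjacent partner of that bag. That bag-meeting step is the one you leave as an unwritten ``case check''.
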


We next consider the circular analogue of linear interval trigraphs.
Let $\Sigma$ be a circle, and let $F_1,\ldots,F_k$ be closed arcs of
$\Sigma$ such that no two arcs have a common endpoint and no three arcs
have union $\Sigma$. Let $V$ be a finite subset of $\Sigma$. Two
distinct points of $V$ are strongly antiadjacent if they lie in no
common arc. They are strongly adjacent if they lie in a common arc and
at least one of them belongs to the interior of a common arc. Otherwise,
they are semiadjacent. A trigraph obtained in this way is called a
\emph{long circular interval trigraph}. Let $\mathcal S_3$ denote the
class of all long circular interval
trigraphs~\cite[Section~3]{ChudnovskySeymour}. Figure~\ref{fig:long-circular-interval} gives a circular representation
of a member of $\mathcal S_3$.

\begin{figure}[H]
    \centering
    \tikzstyle{v}=[circle, draw, fill=black, inner sep=0pt,
    minimum size=5pt]

    \begin{tikzpicture}[
        scale=1.0,
        baseline=(current bounding box.center)
    ]
        \useasboundingbox (-2.75,-2.65) rectangle (2.75,2.75);

        % The circle Sigma
        \draw[black, line width=0.7pt] (0,0) circle (1.65);

        % Vertices on Sigma
        \node[v] (v1) at (20:1.65) {};
        \node[v] (v2) at (55:1.65) {};
        \node[v] (v3) at (110:1.65) {};
        \node[v] (v4) at (145:1.65) {};
        \node[v] (v5) at (200:1.65) {};
        \node[v] (v6) at (235:1.65) {};
        \node[v] (v7) at (290:1.65) {};
        \node[v] (v8) at (325:1.65) {};

        % Vertex labels
        \node at (20:1.30) {$v_1$};
        \node at (55:1.30) {$v_2$};
        \node at (110:1.30) {$v_3$};
        \node at (145:1.30) {$v_4$};
        \node at (200:1.30) {$v_5$};
        \node at (235:1.30) {$v_6$};
        \node at (290:1.30) {$v_7$};
        \node at (325:1.30) {$v_8$};

        % Arcs, drawn at different distances for clarity
        \draw[blue, line width=1.1pt]
            (20:1.82)
            arc[start angle=20, end angle=145, radius=1.82];

        \draw[blue, line width=1.1pt]
            (110:1.96)
            arc[start angle=110, end angle=235, radius=1.96];

        \draw[blue, line width=1.1pt]
            (200:2.10)
            arc[start angle=200, end angle=325, radius=2.10];

        \draw[blue, line width=1.1pt]
            (290:2.24)
            arc[start angle=290, end angle=415, radius=2.24];

        % Connect the arc endpoints to the corresponding vertices
        \draw[gray, dotted]
            (20:1.65)--(20:1.82)
            (145:1.65)--(145:1.82)
            (110:1.65)--(110:1.96)
            (235:1.65)--(235:1.96)
            (200:1.65)--(200:2.10)
            (325:1.65)--(325:2.10)
            (290:1.65)--(290:2.24)
            (55:1.65)--(55:2.24);

        % Arc labels
        \node at (82.5:2.04) {$F_1$};
        \node at (172.5:2.18) {$F_2$};
        \node at (262.5:2.32) {$F_3$};
        \node at (352.5:2.46) {$F_4$};

        \node at (0,0) {$\Sigma$};
    \end{tikzpicture}

    \caption{A circular representation of a long circular interval
    trigraph. The four arcs are drawn at different distances from
    $\Sigma$ for clarity. Their endpoint pairs are semiadjacent, while
    two vertices lying in no common arc are strongly antiadjacent.}
    \label{fig:long-circular-interval}
\end{figure}
The following lemma gives the property of long circular interval
trigraphs that we need.

\begin{lemma}\label{lem:circular-antineighborhood}
Let $G$ be a thickening of a long circular interval trigraph. Then
$G[M_G(x)]$ is perfect for every $x\in V(G)$. In particular, if $G$
has at least one edge, then $G$ has a perfect $\Omega_G$-transversal.
\end{lemma}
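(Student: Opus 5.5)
The plan is to show that for a vertex $x$ of $G$, the antineighborhood $M_G(x)$ is a thickening of a linear interval trigraph, and then invoke Lemma~\ref{lem:linear-thickening}. The transversal statement then follows exactly as in the proof of Lemma~\ref{lem:three-cliques}.

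Let $T$ be the long circular interval trigraph with circle $\Sigma$, arcs $F_1,\ldots,F_k$ and point set $V\subseteq\Sigma$. Let $(X_v:v\in V)$ be the bags of $G$, and let $x\in X_u$. Every vertex of $X_u\setminus\{x\}$ is adjacent to $x$, so $M_G(x)\cap X_u=\emptyset$. Every bag $X_v$ with $v$ strongly adjacent to $u$ is strongly complete to $x$, so it is disjoint from $M_G(x)$. Hence $M_G(x)\subseteq\bigcup_{v\in W}X_v$, where $W$ is the set of vertices $v\neq u$ that are antiadjacent to $u$. These are the strongly antiadjacent ones together with at most one semiadjacent one, since semiadjacent pairs form a matching.

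The key step is to order $W$ linearly by cutting $\Sigma$ at the point $u$. Traverse $\Sigma$ starting just after $u$ and ending just before $u$. This gives a linear order on $V\setminus\{u\}$, and we restrict it to $W$.

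We check the linear interval condition. Suppose $v_i<v_j<v_l$ in $W$ with $v_i$ adjacent to $v_l$. Then $v_i,v_l$ lie in a common arc $F$. There are two possible arcs from $v_i$ to $v_l$: the one containing $v_j$, and the one containing $u$.

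If $F$ contains $u$, then $F$ contains one of $v_i,v_l$ together with $u$ on both sides. This means $u$ shares an arc with these vertices, and the analysis reduces to them being semiadjacent to $u$ at arc endpoints. Here I will use the hypotheses that no two arcs share an endpoint and that no three arcs cover $\Sigma$. The aim is to show that then $u$ lies in the interior of $F$, so $u$ is strongly adjacent to $v_i$ or $v_l$, contradicting $v_i,v_l\in W$. The case where the semiadjacent vertex to $u$ is an endpoint needs the careful check.

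Otherwise $F$ contains the segment from $v_i$ to $v_l$ through $v_j$. Then $v_j$ lies in the interior of $F$, so $v_j$ is strongly adjacent to both $v_i$ and $v_l$, as required.

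This shows that $T[W]$ is a linear interval trigraph. The graph $G\bigl[\bigcup_{v\in W}X_v\bigr]$ is a thickening of $T[W]$, so it is perfect by Lemma~\ref{lem:linear-thickening}. Its induced subgraph $G[M_G(x)]$ is therefore perfect as well.

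I expect the main obstacle to be the wrap-around case: an arc through $u$ joining two members of $W$. This is where the long circular interval axioms must be used carefully. If that argument fails, the fallback is to cut instead at the point of $\Sigma$ that begins the maximal arc containing $u$.

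For the transversal, assume $G$ has an edge and pick any $x\in V(G)$. Set $S=M_G(x)\cup\{x\}$. Then $G[S]$ is the disjoint union of $G[M_G(x)]$ and the single vertex $x$, so it is perfect. If some $K\in\Omega_G$ missed $S$, then $K\subseteq N_G(x)$, and $K\cup\{x\}$ would be a larger clique, which is impossible. So $S$ is a perfect $\Omega_G$-transversal.
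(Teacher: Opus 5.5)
Your overall plan matches the paper's. You cut $\Sigma$ at the base vertex $u$ of $x$, show that the bags antiadjacent to $u$ form a thickening of a linear interval trigraph, apply Lemma~\ref{lem:linear-thickening}, and finish with $S=M_G(x)\cup\{x\}$. Taking all of $\bigcup_{v\in W}X_v$ and then passing to the induced subgraph $G[M_G(x)]$ is valid, and slightly cleaner than the paper's replacement of the semiadjacent bag by $Y$.

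The gap is the wrap-around case, which is the only place the circular structure is used. When the common arc $F$ of $v_i$ and $v_l$ contains $u$, you only state an aim, defer a ``careful check'', and offer a fallback cut. So the linear interval condition for $T[W]$ is not actually established. The sub-goal you set is also off target. A priori $u$ could be an endpoint of $F$, so showing $u$ lies in the interior of $F$ is not the way to the contradiction. The hypotheses that no two arcs share an endpoint and no three arcs cover $\Sigma$ are not what is needed here.

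The missing step is short. If $u\in F$, then $F$ contains the three distinct points $u,v_i,v_l$. Since $F$ has only two endpoints, one of these three lies in the interior of $F$. If it is $u$, then $u$ is strongly adjacent to both $v_i$ and $v_l$. If it is $v_i$ or $v_l$, that vertex is strongly adjacent to $u$. Either way some vertex of $W$ is strongly adjacent to $u$, a contradiction.

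Equivalently, as in the paper: $v_i$ and $v_l$ both share the arc $F$ with $u$, so both are adjacent to $u$. Since they lie in $W$, both would have to be semiadjacent to $u$, contradicting that semiadjacent pairs form a matching.

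Hence $F\subseteq\Sigma\setminus\{u\}$, and your remaining case applies. With this step inserted, your proof is complete.
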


\begin{proof}
Let $T$ be the base trigraph of $G$, with bags
$(X_v:v\in V(T))$, and let $x\in X_v$. Let $R$ be the set of vertices
of $T$ that are strongly antiadjacent to $v$. If $v$ has a
semiadjacent neighbor $u$, put
$Y=\{y\in X_u:xy\notin E(G)\}$; otherwise, put $Y=\emptyset$. Then
$M_G(x)=Y\cup\bigcup_{s\in R}X_s$.

Let $R'=R$ if $Y=\emptyset$, and let $R'=R\cup\{u\}$ otherwise. Cut
the circle at $v$ and order the vertices of $R'$ along the resulting
line. We show that $T[R']$ is a linear interval trigraph.

Let $a<b<c$ in this order, and suppose that $a$ and $c$ lie in a
common arc $F$. The arc $F$ does not contain $v$. This is clear if
$a,c\in R$. Suppose that one of $a,c$ is $u$. Since $u$ and $v$ are
semiadjacent, every arc containing both of them has $u$ and $v$ as
its endpoints. Every other point of such an arc is strongly adjacent
to $v$, whereas the other one of $a,c$ belongs to $R$. Thus $F$
cannot contain $v$.

Since $F$ avoids the cut point and contains $a$ and $c$, it contains
$b$ in its interior. Hence $b$ is strongly adjacent to both $a$ and
$c$. Therefore $T[R']$ is a linear interval trigraph.

The semiadjacent pairs of $T$ form a matching, so $v$ is the only
possible semiadjacent neighbor of $u$. Therefore, when $Y\neq
\emptyset$, the bag $X_u$ may be replaced by $Y$ without changing the
relations with the other retained bags. It follows that $G[M_G(x)]$
is a thickening of $T[R']$. Hence $G[M_G(x)]$ is perfect by
Lemma~\ref{lem:linear-thickening}.

Finally, put $S=M_G(x)\cup\{x\}$. The graph $G[S]$ is perfect because
$x$ is anticomplete to $M_G(x)$. If a maximum clique $K$ of $G$
misses $S$, then $K\subseteq N_G(x)$, so $K\cup\{x\}$ is a larger
clique. Therefore $S$ is a perfect $\Omega_G$-transversal.
\end{proof}

We next introduce antiprismatic trigraphs. A trigraph $T$ is
\emph{antiprismatic} if, for every set $X\subseteq V(T)$ with
$|X|=4$, the subtrigraph $T[X]$ is not a claw and $X$ contains at
least two strongly adjacent pairs. Let $\mathcal S_7$ denote the class
of all antiprismatic trigraphs~\cite[Section~3]{ChudnovskySeymour}.

The anti-neighborhoods of thickenings of antiprismatic trigraphs are
also perfect. In this case, their complements are bipartite.

\begin{lemma}\label{lem:antiprismatic-antineighborhood}
Let $G$ be a thickening of an antiprismatic trigraph. Then
$G[M_G(x)]$ is perfect for every $x\in V(G)$. In particular, if $G$
has at least one edge, then $G$ has a perfect $\Omega_G$-transversal.
\end{lemma}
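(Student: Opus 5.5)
The plan is to work in the base trigraph and show that the vertices of $M_G(x)$ come from bags whose mutual ``non-strong'' relations form a matching. From this it follows that the complement of $G[M_G(x)]$ is bipartite. The transversal statement then follows exactly as in Lemma~\ref{lem:circular-antineighborhood}.

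\textbf{Step 1: the relevant bags.} Let $T$ be the antiprismatic base trigraph with bags $(X_v:v\in V(T))$, and let $x\in X_v$. Since $X_v$ is a strong clique, $M_G(x)\subseteq\bigcup_{u\in R'}X_u$, where $R'$ is the set of vertices of $T$ antiadjacent (strongly or semi) to $v$. Indeed, a vertex in a bag strongly complete to $X_v$ is adjacent to $x$.

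\textbf{Step 2: the key claim.} For distinct $a,b,c\in R'$, at most one of the pairs $ab,bc,ca$ fails to be strongly adjacent in $T$. To see this, consider the $4$-set $\{v,a,b,c\}$. None of $va,vb,vc$ is a strong adjacency, since each of these vertices is antiadjacent to $v$. The antiprismatic condition demands at least two strongly adjacent pairs in this set, so at least two of $ab,bc,ca$ are strongly adjacent. Consequently, the graph $N$ on $R'$ whose edges are the non-strongly-adjacent pairs has maximum degree at most one, i.e.\ it is a matching.

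\textbf{Step 3: complement of $G[M_G(x)]$.} Take two vertices of $M_G(x)$ that are nonadjacent in $G$. They lie in distinct bags $X_a,X_b$, because bags are cliques. Moreover, $a$ and $b$ are not strongly adjacent in $T$, since otherwise the bags would be complete to each other. Hence $ab\in E(N)$. So every edge of $\overline{G[M_G(x)]}$ joins two bags that are matched in $N$. Each bag is a clique in $G$ and hence independent in the complement. Therefore $\overline{G[M_G(x)]}$ is a disjoint union of bipartite graphs, one for each matching edge, together with isolated vertices; in particular it is bipartite.

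\textbf{Step 4: perfection and the transversal.} The complement of a bipartite graph contains no odd antihole, since such an antihole would give an odd hole in a bipartite graph. It also contains no odd hole of length at least five, since such a hole would give an odd antihole, which contains a triangle, in the bipartite complement. Theorem~\ref{thm:SPGT} then shows that $G[M_G(x)]$ is perfect.

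Finally, put $S=M_G(x)\cup\{x\}$. The graph $G[S]$ is perfect because $x$ is isolated in it. If some $K\in\Omega_G$ misses $S$, then $K\subseteq N_G(x)$ and $K\cup\{x\}$ is a larger clique, a contradiction. Hence $S$ is a perfect $\Omega_G$-transversal.

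\textbf{Main obstacle.} The only delicate point is Step 2 with semiadjacency present. One must check that a semiadjacent pair counts as ``not strongly adjacent'' in the antiprismatic condition, so that the counting argument applies uniformly, including when $v$ is semiadjacent to one of $a,b,c$.
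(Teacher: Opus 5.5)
Your proof is correct and is essentially the paper's argument. Both form the graph of non-strongly-adjacent pairs on the base vertices antiadjacent to $v$, control it with the antiprismatic condition on $\{v,a,b,c\}$, and conclude that $\overline{G[M_G(x)]}$ is bipartite. Your uniform treatment of the semiadjacent partner of $v$ streamlines the paper's case split (a matching on $R$, plus no triangle through $u$), and it even shows the auxiliary graph is a matching rather than merely bipartite.

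One small slip in Step~4: $\overline{C_5}\cong C_5$ contains no triangle. It is, however, an odd cycle, so the contradiction with bipartiteness still holds; alternatively, just cite that complements of bipartite graphs are perfect.
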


\begin{proof}
Let $T$ be the base trigraph of $G$, with bags
$(X_v:v\in V(T))$, and let $x\in X_v$. Let $R$ be the set of vertices
of $T$ that are strongly antiadjacent to $v$. If $v$ has a
semiadjacent neighbor $u$, put
$Y=\{y\in X_u:xy\notin E(G)\}$; otherwise, put $Y=\emptyset$. Then
$M_G(x)=Y\cup\bigcup_{s\in R}X_s$.

Let $R'=R$ if $Y=\emptyset$, and let $R'=R\cup\{u\}$ otherwise.
Define a graph $Q$ on $R'$ by joining two vertices when they are not
strongly adjacent in $T$. We show that $Q$ is bipartite.

First, $Q[R]$ is a matching. Otherwise, some $s\in R$ has two
distinct neighbors $p,q\in R$. Among the four vertices
$\{v,s,p,q\}$, no pair containing $v$ is strongly adjacent, and
neither $sp$ nor $sq$ is strongly adjacent. Thus these four vertices
contain at most one strongly adjacent pair, contrary to the definition
of an antiprismatic trigraph.

Suppose that $u\in R'$. Since the semiadjacent pairs of $T$ form a
matching, $v$ is the only semiadjacent neighbor of $u$. If
$u,s,t$ form a triangle in $Q$, then the four vertices
$\{v,u,s,t\}$ contain no strongly adjacent pair, again a
contradiction. Hence $Q$ has no triangle containing $u$.

A cycle of $Q$ avoiding $u$ cannot exist because $Q[R]$ is a
matching. A cycle containing $u$ would contain a path in $Q[R]$
between two neighbors of $u$. Such a path consists of a single edge,
so the cycle would be a triangle. Therefore $Q$ is bipartite.

Every bag retained in $M_G(x)$ is an independent set in
$\overline{G[M_G(x)]}$. Moreover, an edge of the complement between
two retained bags can occur only when their base vertices are adjacent
in $Q$. Since $Q$ is bipartite, $\overline{G[M_G(x)]}$ is bipartite.
Hence $G[M_G(x)]$ is perfect.

Finally, put $S=M_G(x)\cup\{x\}$. The graph $G[S]$ is perfect because
$x$ is anticomplete to $M_G(x)$. If a maximum clique $K$ of $G$
misses $S$, then $K\subseteq N_G(x)$, so $K\cup\{x\}$ is a larger
clique. Therefore $S$ is a perfect $\Omega_G$-transversal.
\end{proof}
It remains to consider the icosahedral trigraphs. The
\emph{icosahedron} is the unique planar graph on twelve vertices in
which every vertex has degree five. Let $T_0$ have vertices
$v_0,v_1,\ldots,v_{11}$. For $1\leq i\leq10$, the vertex $v_i$ is
adjacent to $v_{i+1}$ and $v_{i+2}$, where the subscripts are taken
modulo $10$. Moreover, $v_0$ is adjacent to
$\{v_1,v_3,v_5,v_7,v_9\}$, and $v_{11}$ is adjacent to
$\{v_2,v_4,v_6,v_8,v_{10}\}$. All these adjacencies are strong.

Let $T_1=T_0-v_{11}$. Further members are obtained from
$T_1-v_{10}$ by possibly changing either or both of the pairs
$v_1v_4$ and $v_6v_9$ from strong antiadjacencies to semiadjacencies.
The class consisting of $T_0$, $T_1$, and all the resulting trigraphs
is denoted by $\mathcal S_1$~\cite[Section~3]{ChudnovskySeymour}. Figure~\ref{fig:icosahedral} shows the member $T_1$ of
$\mathcal S_1$.

\begin{figure}[H]
    \centering
    \tikzstyle{v}=[circle, draw, fill=black, inner sep=0pt,
    minimum size=5pt]

   \begin{tikzpicture}[
    scale=0.82,
    transform shape,
    baseline=(current bounding box.center)
]
        \useasboundingbox (-3.65,-3.50) rectangle (3.65,3.65);

        % Outer pentagon: even-indexed vertices
        \node[v] (v2)  at (90:3.00) {};
        \node[v] (v4)  at (18:3.00) {};
        \node[v] (v6)  at (-54:3.00) {};
        \node[v] (v8)  at (-126:3.00) {};
        \node[v] (v10) at (162:3.00) {};

        % Inner pentagon: odd-indexed vertices
        \node[v] (v1) at (126:1.45) {};
        \node[v] (v3) at (54:1.45) {};
        \node[v] (v5) at (-18:1.45) {};
        \node[v] (v7) at (-90:1.45) {};
        \node[v] (v9) at (-162:1.45) {};

        % Central vertex
        \node[v] (v0) at (0,0) {};

        % Outer pentagon
        \draw[blue, line width=0.9pt]
            (v2)--(v4)--(v6)--(v8)--(v10)--(v2);

        % Inner pentagon
        \draw[blue, line width=0.9pt]
            (v1)--(v3)--(v5)--(v7)--(v9)--(v1);

        % Edges between the two pentagons
        \draw[blue, line width=0.9pt]
            (v1)--(v10)
            (v1)--(v2)
            (v3)--(v2)
            (v3)--(v4)
            (v5)--(v4)
            (v5)--(v6)
            (v7)--(v6)
            (v7)--(v8)
            (v9)--(v8)
            (v9)--(v10);

        % Edges incident with v_0
        \draw[blue, line width=0.9pt]
            (v0)--(v1)
            (v0)--(v3)
            (v0)--(v5)
            (v0)--(v7)
            (v0)--(v9);

        % Labels of the outer vertices
        \node at (90:3.38) {$v_2$};
        \node at (18:3.38) {$v_4$};
        \node at (-54:3.38) {$v_6$};
        \node at (-126:3.38) {$v_8$};
        \node at (162:3.43) {$v_{10}$};

        % Labels of the inner vertices
        \node at (126:1.92) {$v_1$};
        \node at (54:1.92) {$v_3$};
        \node at (-18:1.92) {$v_5$};
        \node at (-90:1.92) {$v_7$};
        \node at (-162:1.92) {$v_9$};

        % Label of the central vertex
        \node at (90:0.38) {$v_0$};
    \end{tikzpicture}

    \caption{The icosahedral graph $T_1=T_0-v_{11}$, which belongs
    to $\mathcal S_1$. Every displayed edge is a strong adjacency,
    and every unrepresented pair is strongly antiadjacent.}
    \label{fig:icosahedral}
\end{figure}
We use the following consequence of the substitution theorem of
Lov\'asz~\cite{LovaszSubstitution}: Every clique blow-up of a perfect
graph is perfect. This allows us to find a perfect transversal in every
thickening of an icosahedral trigraph.

\begin{lemma}\label{lem:icosahedral}
Every graph $G$ that is a thickening of a member of $\cS_1$ has a
perfect $\Omega_G$-transversal.
\end{lemma}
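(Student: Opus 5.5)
The plan is to work at the level of the base trigraph. I will choose a set $U\subseteq V(T)$ of base vertices and take $S=\bigcup_{u\in U}X_u$. Here $T\in\cS_1$ is the base trigraph and $(X_v:v\in V(T))$ are the bags. Two things must then be checked: that $G[S]$ is perfect, and that $S$ meets every maximum clique of $G$.

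The anti-neighbourhood trick of Lemmas~\ref{lem:circular-antineighborhood} and~\ref{lem:antiprismatic-antineighborhood} does not work here. In the icosahedron the non-neighbours of a vertex induce a $5$-wheel, so a transversal has to be found by hand. For the transversal property, every clique $K$ of $G$ lies in $\bigcup_{c\in C}X_c$ for some clique $C$ of $T$, with semiadjacent pairs counted as adjacent. Suppose $K$ misses $S$, so that $C\cap U=\emptyset$. If some $u\in U$ is strongly adjacent to every vertex of $C$, then $K\cup X_u$ is a heavier clique. So it suffices that every maximal clique $C$ of $T$ contains a vertex of $U$ that is strongly adjacent to the rest of $C$.

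For $T_0$ the maximal cliques are the $20$ faces: $v_iv_{i+1}v_{i+2}$ for $1\le i\le 10$, $v_0v_iv_{i+2}$ for odd $i$, and $v_{11}v_iv_{i+2}$ for even $i$. I take $U=\{v_0,v_{11},v_1,v_4,v_7,v_{10}\}$. The faces through $v_0$ or $v_{11}$ are hit trivially. The consecutive triples are hit because $1,4,7,10$ leave gaps of at most two around the $10$-cycle. The subtrigraph $T[U]$ is the path $v_7v_0v_1v_{10}v_{11}v_4$, so $G[S]$ is a clique blow-up of a forest. It is therefore perfect by Lov\'asz's substitution theorem, and all adjacencies here are strong. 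For $T_1=T_0-v_{11}$ I use $U\setminus\{v_{11}\}$; the faces of $T_1$ are faces of $T_0$, and the induced graph is still a path.

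For the members obtained from $T_1-v_{10}$, with $v_1v_4$ and/or $v_6v_9$ possibly semiadjacent, the extra cliques $\{v_1,v_2,v_4\}$, $\{v_1,v_3,v_4\}$, $\{v_6,v_7,v_9\}$ and $\{v_6,v_8,v_9\}$ must also be hit. I will take $U=\{v_0,v_1,v_4,v_7,v_9\}$. A direct check shows that every face $v_iv_{i+1}v_{i+2}$ with $1\le i\le7$, every face through $v_0$, the face $v_9v_1v_0$, and the four extra triangles contain a vertex of $U$. That vertex is strongly adjacent to the other two, since only $v_1v_4$ and $v_6v_9$ can be semiadjacent: $v_1$ is strongly adjacent to $v_2,v_3$, and $v_9$ is strongly adjacent to $v_7,v_8$.

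The base graph $T[U]$ consists of the triangles $v_0v_1v_9$ and $v_0v_7v_9$, with $v_4$ possibly attached to $v_1$ through a semiadjacency. For perfection of $G[S]$ I apply Lemma~\ref{lem:clique-attachment} with $G_1=G[X_{v_1}\cup X_{v_4}]$, which is cobipartite and hence perfect, and $Q_1=X_{v_1}$. The other side is $G_2=G[X_{v_0}\cup X_{v_7}\cup X_{v_9}]$, which is a clique, with $Q_2=X_{v_0}\cup X_{v_9}$. Indeed $X_{v_4}$ has no neighbours among the other bags of $S$, and $X_{v_1}$ is complete exactly to $X_{v_0}\cup X_{v_9}$.

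I expect the main obstacle to be the bookkeeping for semiadjacent pairs. Bags joined by a semiadjacency are neither complete nor anticomplete, so two points need care. First, the clique-extension argument must only use vertices of $U$ that are strongly adjacent to the rest of the clique. Second, a perfection argument is needed that does not rely on a pure clique blow-up. Both points are handled by the choice of $U$ together with Lemma~\ref{lem:clique-attachment}, but the finite case check over all members of $\cS_1$ has to be done carefully.
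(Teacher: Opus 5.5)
Your overall strategy is essentially the paper's: blow up a set $U$ of base vertices whose induced subtrigraph is a forest, and prove transversality by enlarging any clique that misses $S$. Your treatment of $T_0$ and $T_1$ is correct, and so is the perfection of $G[S]$ in the semiadjacent case via Lemma~\ref{lem:clique-attachment}. The step that fails is the transversality check for the members obtained from $T_1-v_{10}$. If $v_1v_4$ is semiadjacent, then $\{v_1,v_2,v_3,v_4\}$ is a clique of $T$ (semiadjacency counted as adjacency). So the triangles $\{v_1,v_2,v_4\}$ and $\{v_1,v_3,v_4\}$ that you list are not maximal. More importantly, the only vertices of $U=\{v_0,v_1,v_4,v_7,v_9\}$ in this clique are $v_1$ and $v_4$, and each is only semiadjacent to the other. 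Hence the sufficient condition you reduced to is false for your $U$: it is not true that every maximal clique of $T$ contains a vertex of $U$ strongly adjacent to the rest of it. Your assertion that the hitting vertex is strongly adjacent to the other two is also false for $\{v_1,v_2,v_4\}$ and $\{v_1,v_3,v_4\}$. Likewise, $v_9$ is the only $U$-vertex of $\{v_6,v_8,v_9\}$ and is semiadjacent to $v_6$; there the maximal clique $\{v_6,v_7,v_8,v_9\}$ happens to be rescued by $v_7$. As written, this part of the proof does not go through.

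The conclusion is still true for your $U$, provided you apply the extension criterion to the support of a clique that misses $S$, rather than to a maximal clique of $T$ containing it. If $K\in\Omega_G$ misses $S$, its support lies in $V(T)\setminus U=\{v_2,v_3,v_5,v_6,v_8\}$. This set induces the path $v_2v_3v_5v_6v_8$, with strong adjacencies only. Its edges $\{v_2,v_3\}$, $\{v_3,v_5\}$, $\{v_5,v_6\}$ and $\{v_6,v_8\}$ are strongly complete to $v_1$, $v_0$, $v_7$ and $v_7$, respectively. So $K$ extends to a larger clique, a contradiction.

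This is how the paper argues in this case. It takes $P$ to be the union of the bags of $\{v_0,v_4,v_8\}\cup\{v_2,v_5,v_9\}$, which contains only one end of each possible semiadjacent pair. Thus $G[P]$ is a genuine clique blow-up of a bipartite graph, and no attachment lemma is needed. It then checks directly that a clique missing $P$ lies in $X_{v_1}\cup X_{v_3}$ or in $X_{v_6}\cup X_{v_7}$, which are complete to $X_{v_0}$ and $X_{v_5}$, respectively.
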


\begin{proof}
Let $T$ be the base trigraph of $G$, with bags
$(X_v:v\in V(T))$. Suppose first that $T\in\{T_0,T_1\}$. Put
$C_0=\{v_0,v_2,v_6\}$, $C_1=\{v_1,v_4,v_8\}$,
$C_2=\{v_3,v_7,v_{10}\}$, and
$C_3=\{v_5,v_9,v_{11}\}$, where $v_{11}$ is omitted when $T=T_1$.
Each $C_i$ is an independent set. Let
$P=\bigcup_{v\in C_0\cup C_1}X_v$. Since
$T[C_0\cup C_1]$ is bipartite, $G[P]$ is perfect by the substitution
theorem of Lov\'asz.

We show that every maximal clique of $T_0$ and $T_1$ is a triangle.
The vertices $v_1,\ldots,v_{10}$ induce the square of a $10$-cycle,
whose clique number is three. The neighbors of $v_0$ among these
vertices induce a $5$-cycle, and the same holds for the neighbors of
$v_{11}$. Moreover, $v_0$ and $v_{11}$ are nonadjacent. Hence $T_0$
is $K_4$-free.

Every edge of $T_0$ lies in two triangles. Indeed, with subscripts
taken modulo $10$, an edge $v_iv_{i+1}$ extends to triangles through
$v_{i-1}$ and $v_{i+2}$, while an edge $v_iv_{i+2}$ extends through
$v_{i+1}$ and through $v_0$ or $v_{11}$ according as $i$ is odd or
even. The same conclusion is immediate for edges incident with $v_0$
or $v_{11}$. Thus every maximal clique of $T_0$ is a triangle. After
$v_{11}$ is deleted, every remaining edge still lies in a triangle,
so the same holds for $T_1$.

Let $K$ be a maximum clique of $G$. The bags met by $K$ are indexed
by a maximal clique of $T$, and hence by a triangle. Since each $C_i$
is independent, this triangle meets three distinct sets among
$C_0,C_1,C_2,C_3$. It must therefore meet $C_0\cup C_1$, and hence
$K\cap P\neq\emptyset$.

It remains to consider a trigraph obtained from $T_1-v_{10}$ by
possibly changing one or both of $v_1v_4$ and $v_6v_9$ to
semiadjacent pairs. Put $D_0=\{v_0,v_4,v_8\}$,
$D_1=\{v_1,v_6\}$, $D_2=\{v_3,v_7\}$, and
$D_3=\{v_2,v_5,v_9\}$. Let
$P=\bigcup_{v\in D_0\cup D_3}X_v$. The sets $D_0$ and $D_3$ are
independent, and at most one end of each possible semiadjacent pair
belongs to $D_0\cup D_3$. Hence $G[P]$ is a clique blow-up of a
bipartite graph and is perfect.

Any clique of $G$ that misses $P$ is contained in
$X_{v_1}\cup X_{v_3}$ or in $X_{v_6}\cup X_{v_7}$. The first set is
complete to the nonempty bag $X_{v_0}$, and the second is complete to
the nonempty bag $X_{v_5}$. Such a clique is not maximal. Therefore
every maximum clique of $G$ meets $P$, and $P$ is a perfect
$\Omega_G$-transversal.
\end{proof}

\section{Perfect transversals for the two-marker stripe classes}\label{sec:stripes}

It remains to prepare for outcome $(iii)$ of
Theorem~\ref{thm:structure}. The marker-count argument in
Section~\ref{sec:main-claw-free} shows that every non-spot piece arising in
this outcome is a thickening of a two-marker member of one of
$\cZ_1,\ldots,\cZ_5$. We now define these five types and prove the local
properties needed later. For $\cZ_1,\ldots,\cZ_4$, deleting either
terminal leaves a perfect graph. The class $\cZ_5$ requires a separate
central perfect set. Lemma~\ref{lem:oriented-local-set} will combine
these properties into one orientation-compatible statement.

\subsection{Type \texorpdfstring{$\cZ_1$}{Z1}}

Following Chudnovsky and Seymour~\cite[Section~7]{ChudnovskySeymour},
the first type is built from a linear interval trigraph. A stripe
$(T,\{v_1,v_n\})$ belongs to $\cZ_1$ if $T$ is a linear interval
trigraph with ordering $v_1,\ldots,v_n$, where $n\geq2$, and the
following conditions hold:

\begin{enumerate}[label=\textup{$(\roman*)$}]
\item $v_1$ and $v_n$ are strongly antiadjacent;
\item no vertex is adjacent to both $v_1$ and $v_n$;
\item no vertex is semiadjacent to $v_1$ or $v_n$.
\end{enumerate}

The markers are $v_1$ and $v_n$. A representative member of
$\cZ_1$ is shown in Figure~\ref{fig:z1}. In this example, the base is
the square of a seven-vertex path. The terminals corresponding to
$v_1$ and $v_7$ are represented by $\{v_2,v_3\}$ and
$\{v_5,v_6\}$, respectively.

\begin{figure}[H]
    \centering
    \tikzstyle{v}=[
        circle,
        draw,
        fill=black,
        inner sep=0pt,
        minimum size=5pt
    ]

    \begin{tikzpicture}[
        scale=0.82,
        transform shape,
        baseline=(current bounding box.center)
    ]
        \useasboundingbox (-0.8,-1.40) rectangle (7.75,1.35);

        \node[v] (v1) at (0,0) {};
        \node[v] (v2) at (1.15,0) {};
        \node[v] (v3) at (2.30,0) {};
        \node[v] (v4) at (3.45,0) {};
        \node[v] (v5) at (4.60,0) {};
        \node[v] (v6) at (5.75,0) {};
        \node[v] (v7) at (6.90,0) {};

        % Consecutive pairs
        \draw[blue, line width=0.9pt]
            (v1)--(v2)--(v3)--(v4)--(v5)--(v6)--(v7);

        % Pairs at distance two
        \draw[blue, line width=0.9pt]
            (v1) to[bend left=35] (v3)
            (v2) to[bend left=35] (v4)
            (v3) to[bend left=35] (v5)
            (v4) to[bend left=35] (v6)
            (v5) to[bend left=35] (v7);

        \node at (0,-0.42) {$z_1=v_1$};
        \node at (1.15,-0.42) {$v_2$};
        \node at (2.30,-0.42) {$v_3$};
        \node at (3.45,-0.42) {$v_4$};
        \node at (4.60,-0.42) {$v_5$};
        \node at (5.75,-0.42) {$v_6$};
        \node at (6.90,-0.42) {$z_2=v_7$};

        \draw[
            decorate,
            decoration={brace,mirror,amplitude=4pt},
            line width=0.7pt
        ]
            (0.92,-0.68)--(2.53,-0.68)
            node[midway,below=5pt] {$A_1$};

        \draw[
            decorate,
            decoration={brace,mirror,amplitude=4pt},
            line width=0.7pt
        ]
            (4.37,-0.68)--(5.98,-0.68)
            node[midway,below=5pt] {$A_2$};
    \end{tikzpicture}

    \caption{A representative stripe of type $\cZ_1$.}
    \label{fig:z1}
\end{figure}

The linear ordering in the definition gives the perfectness property
needed later.

\begin{lemma}\label{lem:z1}
Let $S$ be the piece of a thickening of a stripe of type $\cZ_1$, and
let $A_1,A_2$ be its terminals. Then both $S-A_1$ and $S-A_2$ are
perfect.
\end{lemma}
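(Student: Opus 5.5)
The plan is to show that $S$ itself, and therefore every induced subgraph of $S$, is a thickening of a linear interval trigraph. Lemma~\ref{lem:linear-thickening} then gives perfection directly. In fact this yields more than the statement asks for: $S$ is perfect, so both $S-A_1$ and $S-A_2$, being induced subgraphs of $S$, are perfect.

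Let $(T,\{v_1,v_n\})$ be the stripe of type $\cZ_1$, and let $(G,\{z_1,z_2\})$ be its thickening with bags $(X_v:v\in V(T))$, where $X_{v_1}=\{z_1\}$ and $X_{v_n}=\{z_2\}$. Then the piece is
\[
S=G-\{z_1,z_2\}=G\Bigl[\textstyle\bigcup_{i=2}^{n-1}X_{v_i}\Bigr].
\]
Deleting the bags of $v_1$ and $v_n$ leaves exactly the bags of $T'=T-\{v_1,v_n\}$. The three thickening conditions only refer to pairs of bags, so they continue to hold for the remaining bags. Hence $S$ is a thickening of $T'$, with the same bags.

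Next, $T'$ is a linear interval trigraph. As noted in the paper right after the definition, restricting the ordering $v_2,\ldots,v_{n-1}$ to any subset preserves the defining property. Lemma~\ref{lem:linear-thickening} therefore implies that $S$ is perfect, and so are its induced subgraphs $S-A_1$ and $S-A_2$. If $n\le 3$, then $T'$ has at most one vertex and $S$ is a clique, which is trivially perfect.

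The only point needing care is a degenerate situation. If $A_i$ were a union of whole bags whose removal left some bag only partially present, the induced subgraph would no longer be literally a thickening. This does not matter here, because the argument is applied to $S$ and perfection is hereditary. A second thing to check is that the paper's notion of thickening of a graph allows the base to have semiadjacent pairs; the induced graph on the bags is exactly the graph $S$, so this is consistent. I do not expect any real obstacle: the lemma follows from hereditariness together with Lemma~\ref{lem:linear-thickening}.
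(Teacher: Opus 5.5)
Your proof is correct and follows essentially the paper's route. The paper also observes that $S$ is a thickening of an induced subtrigraph of the linear interval trigraph $T$ and applies Lemma~\ref{lem:linear-thickening}, but it applies this to $S-A_i$ directly, using that the terminals are unions of whole bags. Your shortcut of showing that $S$ itself is perfect and then invoking heredity makes that whole-bags observation unnecessary, as well as your somewhat garbled ``degenerate situation'' remark.
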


\begin{proof}
Let $T$ be the base trigraph. After the marker bags are deleted, $S$
is a thickening of an induced subtrigraph of $T$. Each terminal is a
union of whole bags. Hence deleting either terminal produces a
thickening of another induced subtrigraph of $T$. Since every induced
subtrigraph of a linear interval trigraph is again a linear interval
trigraph, the result follows from Lemma~\ref{lem:linear-thickening}.
\end{proof}

\subsection{Type \texorpdfstring{$\cZ_2$}{Z2}}

Following Chudnovsky and Seymour~\cite[Section~7]{ChudnovskySeymour},
the second type has three main strong cliques. Fix $n\geq2$, and let
$A^0=\{a_0,a_1,\ldots,a_n\}$,
$B^0=\{b_0,b_1,\ldots,b_n\}$, and
$C^0=\{c_1,\ldots,c_n\}$ be pairwise disjoint strong cliques. The
remaining relations are defined as follows.

For $0\leq i,j\leq n$ with $(i,j)\neq(0,0)$, the vertices $a_i$ and
$b_j$ are adjacent if and only if $i=j$. The pair $a_0b_0$ is strongly
antiadjacent. For $1\leq i\leq n$ and $0\leq j\leq n$, the vertex
$c_i$ is adjacent to $a_j$ and to $b_j$ if and only if $i\neq j$ and
$j\neq0$. Every other pair not specified as adjacent is strongly
antiadjacent.

A stripe of type $\cZ_2$ is obtained by deleting a set
$D\subseteq(A^0\cup B^0\cup C^0)\setminus\{a_0,b_0\}$ such that
$|C^0\setminus D|\geq2$. The relations among the remaining vertices
are inherited from the preceding rules, except for the following
possible changes:

\begin{enumerate}[label=\textup{$(\alph*)$}]
\item if $b_i\in D$, the pair $a_ic_i$ may be semiadjacent instead
of strongly antiadjacent;
\item if $a_i\in D$, the pair $b_ic_i$ may be semiadjacent instead
of strongly antiadjacent;
\item if $c_i\in D$, the pair $a_ib_i$ may be semiadjacent instead
of strongly adjacent.
\end{enumerate}

In each of the three families above, at most one surviving pair may be
semiadjacent. Every other adjacent pair is strongly adjacent. The
markers are $a_0$ and $b_0$. Figure~\ref{fig:z2} shows the case
$n=2$ and $D=\emptyset$.

\begin{figure}[H]
    \centering
    \tikzstyle{v}=[
        circle,
        draw,
        fill=black,
        inner sep=0pt,
        minimum size=5pt
    ]

    \begin{tikzpicture}[
        scale=0.82,
        transform shape,
        baseline=(current bounding box.center)
    ]
        \useasboundingbox (-4.50,-2.15) rectangle (4.50,2.15);

        \node[v] (a0) at (-3.10,0) {};
        \node[v] (a1) at (-1.85,1.35) {};
        \node[v] (a2) at (-1.85,-1.35) {};

        \node[v] (b0) at (3.10,0) {};
        \node[v] (b1) at (1.85,1.35) {};
        \node[v] (b2) at (1.85,-1.35) {};

        \node[v] (c2) at (0,0.48) {};
        \node[v] (c1) at (0,-0.48) {};

        % The three strong cliques
        \draw[blue, line width=0.9pt]
            (a0)--(a1)--(a2)--(a0)
            (b0)--(b1)--(b2)--(b0)
            (c1)--(c2);

        % Remaining strong adjacencies
        \draw[blue, line width=0.9pt]
            (a1)--(b1)
            (a2)--(b2)
            (c2)--(a1)
            (c2)--(b1)
            (c1)--(a2)
            (c1)--(b2);

        % Vertex labels
        \node[left=7pt, align=center] at (a0)
            {$a_0$\\[-1pt]{\scriptsize marker}};
        \node[above left=2pt] at (a1) {$a_1$};
        \node[below left=2pt] at (a2) {$a_2$};

        \node[right=7pt, align=center] at (b0)
            {$b_0$\\[-1pt]{\scriptsize marker}};
        \node[above right=2pt] at (b1) {$b_1$};
        \node[below right=2pt] at (b2) {$b_2$};

\node[above=5pt] at (c2) {$c_2$};
\node[below=5pt] at (c1) {$c_1$};
    \end{tikzpicture}

    \caption{A representative stripe of type $\cZ_2$ with $n=2$.}
    \label{fig:z2}
\end{figure}

Although the adjacencies in $\cZ_2$ are more involved, deleting either
terminal leaves a graph covered by two cliques.

\begin{lemma}\label{lem:z2}
Let $S$ be the piece of a thickening of a stripe of type $\cZ_2$, and
let $A_1,A_2$ be its terminals. Then both $S-A_1$ and $S-A_2$ are
perfect.
\end{lemma}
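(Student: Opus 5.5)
Let $T$ be the base trigraph, a stripe of type $\cZ_2$ obtained from $A^0\cup B^0\cup C^0$ by deleting $D$, with markers $a_0,b_0$, and let $(X_v)$ be the bags of the thickening. By the definition of terminals, $A_1$ is the union of the bags of the neighbours of $a_0$ other than the markers, namely the bags $X_{a_i}$ with $i\ge1$ and $a_i\notin D$. Indeed, $c_j$ is never adjacent to $a_0$ because $j\ne0$ is required, and $b_j$ is adjacent to $a_0$ only if $j=0$, which is excluded. Symmetrically, $A_2=\bigcup_{i\ge1,\,b_i\notin D}X_{b_i}$. So $S-A_1$ is exactly the union of the bags of the surviving $b_i$ ($i\ge1$) and the surviving $c_i$.

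We show that this set is the union of two cliques of $G$, namely $P=\bigcup X_{b_i}$ and $Q=\bigcup X_{c_i}$. Each bag is a strong clique. The vertices $\{b_1,\dots,b_n\}$ are pairwise strongly adjacent in $B^0$, since the optional semiadjacencies (a)--(c) only concern pairs of the form $a_ic_i$, $b_ic_i$ and $a_ib_i$, never two vertices of $B^0$. Hence the bags of two distinct surviving $b_i,b_j$ are strongly complete, and $P$ is a clique. The same argument applies to $C^0$, so $Q$ is a clique. Thus $V(S-A_1)=P\cup Q$ is covered by two cliques, so its complement is bipartite. Bipartite graphs are perfect, and so are their complements (by the Weak Perfect Graph Theorem, or directly from Theorem~\ref{thm:SPGT}, since a graph whose vertex set is the union of two cliques contains no odd hole of length at least five and no odd antihole). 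Therefore $S-A_1$ is perfect. The case of $S-A_2$ is symmetric, with $A^0$ and $B^0$ interchanged.

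The only point requiring care is the identification of the terminals, which is straightforward but easy to get wrong. In particular, no $c_i$ lies in a terminal, and any semiadjacency of type (a), (b) or (c) changes only edges between different cliques, never within $B^0$, $C^0$ or $A^0$.
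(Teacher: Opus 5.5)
Your proof is correct and follows the same route as the paper. Both arguments identify $A_1$ (resp.\ $A_2$) as the union of the bags of the surviving $a_i$ (resp.\ $b_i$) with $i\ge1$, cover the rest of the piece by the two cliques coming from the $B^0$-bags (resp.\ $A^0$-bags) and the $C^0$-bags, and conclude that the graph is cobipartite and hence perfect; your explicit check that the optional semiadjacencies (a)--(c) never involve two vertices of the same strong clique is a detail the paper leaves implicit.
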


\begin{proof}
Let $(X_v:v\in V(T))$ be the bags of the thickening, where $T$ is the
base trigraph. The terminal corresponding to $a_0$ is the union of
the bags indexed by the surviving vertices of
$A^0\setminus\{a_0\}$. Similarly, the terminal corresponding to $b_0$
is the union of the bags indexed by the surviving vertices of
$B^0\setminus\{b_0\}$.

After the first terminal is deleted, the remaining vertices are covered
by the bags indexed by $B^0\setminus\{b_0\}$ and by those indexed by
$C^0$. Each union is a clique because $B^0$ and $C^0$ are strong
cliques. Thus $S-A_1$ is cobipartite and therefore perfect. By
symmetry, $S-A_2$ is also cobipartite and perfect.
\end{proof}

\subsection{Type \texorpdfstring{$\cZ_3$}{Z3}}

Following Chudnovsky and Seymour~\cite[Section~7]{ChudnovskySeymour},
the third type is defined through a line trigraph. Let $R$ be a graph
containing a path $h_1h_2h_3h_4h_5$ in this order. Suppose that
$h_1$ and $h_5$ have degree one and that every edge of $R$ is incident
with at least one of $h_2,h_3,h_4$.

In the terminology of Chudnovsky and
Seymour~\cite[Sections~2 and~7]{ChudnovskySeymour}, a \emph{line
trigraph} of $R$ has vertex set $E(R)$. Two edges of $R$
with no common end are strongly antiadjacent. Two edges with a common
end are adjacent, and they are strongly adjacent if their common end
has degree at least three. If their only common end has degree two,
they may be strongly adjacent or semiadjacent, subject to the condition
that the semiadjacent pairs form a matching.

Put $p=h_2h_3$ and $q=h_3h_4$. Starting with a line trigraph of $R$,
make $p$ and $q$ either semiadjacent or strongly antiadjacent. The
markers are $h_1h_2$ and $h_4h_5$. Only choices for which these
vertices are the markers of a stripe are included in $\cZ_3$. In
particular, no vertex of the resulting trigraph is adjacent to both
markers.

Figure~\ref{fig:z3} gives an example. The underlying graph has two
additional edges $h_2x$ and $h_4y$. In the corresponding line
trigraph, the dashed pair is semiadjacent.

\begin{figure}[H]
    \centering
    \tikzstyle{v}=[
        circle,
        draw,
        fill=black,
        inner sep=0pt,
        minimum size=5pt
    ]

    \setlength{\tabcolsep}{2.2em}

    \begin{tabular}{cc}
        \begin{tikzpicture}[
            scale=0.82,
            transform shape,
            baseline=(current bounding box.center)
        ]
            \useasboundingbox (-2.65,-1.05) rectangle (2.65,1.55);

            \node[v] (h1) at (-2,0) {};
            \node[v] (h2) at (-1,0) {};
            \node[v] (h3) at (0,0) {};
            \node[v] (h4) at (1,0) {};
            \node[v] (h5) at (2,0) {};
            \node[v] (x) at (-1,1) {};
            \node[v] (y) at (1,1) {};

            \draw[blue, line width=0.9pt]
                (h1)--(h2)--(h3)--(h4)--(h5)
                (h2)--(x)
                (h4)--(y);

            \node[below=4pt] at (h1) {$h_1$};
            \node[below=4pt] at (h2) {$h_2$};
            \node[below=4pt] at (h3) {$h_3$};
            \node[below=4pt] at (h4) {$h_4$};
            \node[below=4pt] at (h5) {$h_5$};
            \node[above=4pt] at (x) {$x$};
            \node[above=4pt] at (y) {$y$};

            \node at (0,-0.85) {$R$};
        \end{tikzpicture}
        &
        \begin{tikzpicture}[
            scale=0.82,
            transform shape,
            baseline=(current bounding box.center)
        ]
            \useasboundingbox (-3.10,-1.35) rectangle (3.10,1.55);

            \node[v] (z1) at (-2.45,0) {};
            \node[v] (p) at (-1.25,0.82) {};
            \node[v] (r) at (-1.25,-0.82) {};

            \node[v] (q) at (1.25,0.82) {};
            \node[v] (s) at (1.25,-0.82) {};
            \node[v] (z2) at (2.45,0) {};

            \draw[blue, line width=0.9pt]
                (z1)--(p)--(r)--(z1)
                (z2)--(q)--(s)--(z2);

            \draw[
                blue,
                dashed,
                line width=0.9pt
            ]
                (p)--(q);

            \node[left=5pt] at (z1) {$z_1$};
            \node[above=4pt] at (p) {$p$};
            \node[below=4pt] at (r) {$h_2x$};

            \node[above=4pt] at (q) {$q$};
            \node[below=4pt] at (s) {$h_4y$};
            \node[right=5pt] at (z2) {$z_2$};

            \node at (0,-1.15) {$T$};
        \end{tikzpicture}
        \\[-0.2em]
        {\small underlying graph}
        &
        {\small corresponding stripe}
    \end{tabular}

    \caption{A representative stripe of type $\cZ_3$.}
    \label{fig:z3}
\end{figure}

The line-trigraph description again gives a simple structure after
either terminal is removed.

\begin{lemma}\label{lem:z3}
Let $S$ be the piece of a thickening of a stripe of type $\cZ_3$, and
let $A_1,A_2$ be its terminals. Then both $S-A_1$ and $S-A_2$ are
perfect.
\end{lemma}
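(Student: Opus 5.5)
The plan is to argue as in Lemma~\ref{lem:z2} and show that deleting either terminal leaves a graph whose vertex set is covered by two cliques. Such a graph is cobipartite and therefore perfect. By the symmetry $h_1\leftrightarrow h_5$, $h_2\leftrightarrow h_4$ of the definition of $\cZ_3$, it suffices to treat $S-A_1$.

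First, identify the terminal. Let $T$ be the base trigraph, with bags $(X_e:e\in V(T))$. The vertices of $T$ are edges of $R$, and the marker $z_1$ is the edge $h_1h_2$. Since $h_1$ has degree one, the vertices of $T$ adjacent to $z_1$ are exactly the non-marker edges of $R$ incident with $h_2$; this includes $p=h_2h_3$. Hence $A_1=\bigcup\{X_e: e\ni h_2,\ e\neq h_1h_2\}$, and $S-A_1$ is the union of the bags $X_e$ over the non-marker edges $e$ of $R$ that avoid $h_2$. Every edge of $R$ is incident with one of $h_2,h_3,h_4$. So each such $e$ is incident with $h_3$ or with $h_4$, and $e\neq h_4h_5$ because that edge is the other marker. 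Put $P_3=\{e: h_3\in e,\ h_2\notin e\}$ and $P_4=\{e:h_4\in e,\ e\neq h_4h_5,\ h_2\notin e\}$. Then $V(S-A_1)=\bigcup_{e\in P_3\cup P_4}X_e$.

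The main point to check is that $P_3$ and $P_4$ are \emph{strong} cliques of $T$. A semiadjacent pair of bags in a thickening is not complete, so mere adjacency would not suffice. Two distinct edges in $P_3$ share the end $h_3$ and no other end. If $h_3$ has degree at least three, they are strongly adjacent by the line-trigraph rule. Only the pair $p,q$ had its relation altered, and $p\notin P_3$, so no change affects $P_3$. If $h_3$ has degree two, its only edges are $p$ and $q$, so $|P_3|\le1$. The same argument applies to $P_4$: if $h_4$ has degree two, its edges are $q$ and $h_4h_5$, so $P_4\subseteq\{q\}$; otherwise all pairs in $P_4$ are strongly adjacent through $h_4$.

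Since bags are strong cliques and strongly adjacent base vertices give strongly complete bags, both $\bigcup_{e\in P_3}X_e$ and $\bigcup_{e\in P_4}X_e$ are cliques of $S$. The edge $q$ may lie in both sets, which causes no problem. Thus $S-A_1$ is covered by two cliques. Its complement is then bipartite, so $S-A_1$ is perfect, for instance by Theorem~\ref{thm:SPGT}. The only delicate step is ruling out semiadjacent pairs inside $P_3$ and $P_4$, and the degree case analysis above handles it.
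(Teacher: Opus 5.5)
Your proof is correct and follows essentially the same route as the paper. Both identify $A_1$ as the bags of the non-marker edges at $h_2$, cover $S-A_1$ by the bags of the edges at $h_3$ and at $h_4$, and use the degree-two versus degree-at-least-three case analysis to show that each part is a clique, so $S-A_1$ is cobipartite. The only difference is cosmetic: the paper removes both $p$ and $q$ from the $h_3$-part, so $q$ lies only in the $h_4$-part, whereas you keep $q$ in both parts. That is equally valid, since $q$ is strongly adjacent to every other edge at $h_3$ when $\deg_R(h_3)\geq 3$.
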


\begin{proof}
Let $(X_e:e\in E(R))$ be the bags of the thickening. Since the marker
$h_1h_2$ is simplicial, $A_1$ is the union of the bags indexed by the
non-marker edges incident with $h_2$. Similarly, $A_2$ is the union
of the bags indexed by the non-marker edges incident with $h_4$.

After $A_1$ is deleted, every remaining base edge is incident with
$h_3$ or $h_4$. Let $D_3$ be the union of the bags indexed by the
edges incident with $h_3$, excluding $p$ and $q$, and let $D_4$ be
the union of the bags indexed by the edges incident with $h_4$,
excluding the marker $h_4h_5$.

Both $D_3$ and $D_4$ are cliques. Indeed, if the corresponding vertex
has degree at least three in $R$, the relevant base vertices are
strongly adjacent. If it has degree two, the corresponding set is
empty or consists of one bag. Since $S-A_1=D_3\cup D_4$, the graph
$S-A_1$ is cobipartite and therefore perfect. The same argument with
$h_2$ and $h_4$ interchanged shows that $S-A_2$ is perfect.
\end{proof}

\subsection{Type \texorpdfstring{$\cZ_4$}{Z4}}

Following Chudnovsky and Seymour~\cite[Section~7]{ChudnovskySeymour},
the base trigraph of a stripe of type $\cZ_4$ has vertex set
$\{a_0,a_1,a_2,b_0,b_1,b_2,b_3,c_1,c_2\}$. The sets
$\{a_0,a_1,a_2\}$, $\{b_0,b_1,b_2,b_3\}$,
$\{a_2,c_1,c_2\}$, and $\{a_1,b_1,c_2\}$ are strong cliques.
In addition, $b_2$ and $c_1$ are strongly adjacent, while
$b_2c_2$ and $b_3c_1$ are semiadjacent. Every other pair not
specified above is strongly antiadjacent. The markers are $a_0$ and
$b_0$.

A stripe of type $\cZ_4$ is shown in Figure~\ref{fig:z4}. Solid blue
lines represent strong adjacencies, and dashed blue lines represent
semiadjacencies.
\begin{figure}[H]
\centering
\tikzstyle{v}=[
circle,
draw,
fill=black,
inner sep=0pt,
minimum size=5pt
]

\begin{tikzpicture}[
    scale=0.88,
    transform shape,
    baseline=(current bounding box.center)
]
    \useasboundingbox (-4.85,-2.05) rectangle (4.85,2.05);

    % Left triangle
    \node[v] (a0) at (-4.00,0) {};
    \node[v] (a1) at (-2.70,1.20) {};
    \node[v] (a2) at (-2.70,-1.20) {};

    % Central vertices
    \node[v] (c2) at (-0.25,0.45) {};
    \node[v] (c1) at (-0.25,-0.55) {};

    % Right diamond
    \node[v] (b0) at (4.00,0) {};
    \node[v] (b1) at (2.70,1.20) {};
    \node[v] (b2) at (1.75,0) {};
    \node[v] (b3) at (2.70,-1.20) {};

    % Strong clique on the a-vertices
    \draw[blue, line width=0.9pt]
        (a0)--(a1)--(a2)--(a0);

    % Strong clique on the b-vertices
    \draw[blue, line width=0.9pt]
        (b1)--(b0)--(b3)--(b2)--(b1)
        (b1)--(b3)
        (b0)--(b2);

    % Strong clique on a2,c1,c2
    \draw[blue, line width=0.9pt]
        (a2)--(c1)--(c2)--(a2);

    % Strong clique on a1,b1,c2
    \draw[blue, line width=0.9pt]
        (a1)--(b1)--(c2)--(a1);

    % Additional strong adjacency
    \draw[blue, line width=0.9pt]
        (c1)--(b2);

    % Semiadjacent pairs
    \draw[blue, dashed, line width=0.9pt]
        (c2)--(b2)
        (c1)--(b3);

    % Vertex labels
    \node[left=7pt, align=center] at (a0)
        {$a_0$\\[-1pt]{\scriptsize marker}};
    \node[above left=2pt] at (a1) {$a_1$};
    \node[below left=2pt] at (a2) {$a_2$};

    \node[above=5pt] at (c2) {$c_2$};
    \node[below=5pt] at (c1) {$c_1$};

    \node[right=7pt, align=center] at (b0)
        {$b_0$\\[-1pt]{\scriptsize marker}};
    \node[above=5pt] at (b1) {$b_1$};
    \node[above left=3pt] at (b2) {$b_2$};
    \node[below=5pt] at (b3) {$b_3$};
\end{tikzpicture}

\caption{The stripe of type $\cZ_4$.}
\label{fig:z4}
\end{figure}
As in the preceding two types, deleting either terminal leaves a
cobipartite graph.

\begin{lemma}\label{lem:z4}
Let $S$ be the piece of a thickening of a stripe of type $\cZ_4$, and
let $A_1,A_2$ be its terminals. Then both $S-A_1$ and $S-A_2$ are
perfect.
\end{lemma}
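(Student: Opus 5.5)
We argue exactly as for $\cZ_2$ and $\cZ_3$: we show that after either terminal is deleted, the remaining vertices are covered by two cliques. Then $S-A_i$ is cobipartite, hence perfect.

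Let $(X_v:v\in V(T))$ be the bags of the thickening. The marker $a_0$ is strongly adjacent exactly to $a_1,a_2$, and to nothing else. Hence $A_1=X_{a_1}\cup X_{a_2}$. Similarly, $b_0$ is strongly adjacent exactly to $b_1,b_2,b_3$, so $A_2=X_{b_1}\cup X_{b_2}\cup X_{b_3}$. The piece $S$ is the union of the bags indexed by $a_1,a_2,b_1,b_2,b_3,c_1,c_2$.

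\emph{Deleting $A_1$.} The remaining bags are indexed by $b_1,b_2,b_3,c_1,c_2$. Since $\{b_1,b_2,b_3\}$ is a strong clique, $X_{b_1}\cup X_{b_2}\cup X_{b_3}$ is a clique of $S$. Since $c_1c_2$ is a strong adjacency (from the strong clique $\{a_2,c_1,c_2\}$) and each bag is a strong clique, $X_{c_1}\cup X_{c_2}$ is a clique. So $S-A_1$ is the union of two cliques.

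\emph{Deleting $A_2$.} The remaining bags are indexed by $a_1,a_2,c_1,c_2$. The pairs $a_1a_2$, $a_1c_2$, $a_2c_1$, $a_2c_2$ and $c_1c_2$ are all strong adjacencies, coming from the strong cliques $\{a_0,a_1,a_2\}$, $\{a_1,b_1,c_2\}$ and $\{a_2,c_1,c_2\}$. Thus $\{a_1,a_2,c_2\}$ and $\{c_1\}$ are strong cliques of $T$. Therefore $X_{a_1}\cup X_{a_2}\cup X_{c_2}$ and $X_{c_1}$ are cliques of $S$, and $S-A_2$ is cobipartite.

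Cobipartite graphs are perfect: their complements are bipartite, and we may use Theorem~\ref{thm:SPGT}, or simply the perfect graph theorem. This gives both conclusions.

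The only delicate point is the semiadjacent pairs $b_2c_2$ and $b_3c_1$. In a thickening, such a pair is neither strongly complete nor strongly anticomplete, so edges between those bags are arbitrary. Our partitions never place two semiadjacent base vertices in the same clique: $b$'s and $c$'s lie in different parts in the first case, and the $b$'s are absent in the second. So no adjacency is needed across a semiadjacent pair, and the argument holds for any thickening.
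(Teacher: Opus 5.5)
Your proof is correct and essentially the paper's argument: you identify the terminals as $X_{a_1}\cup X_{a_2}$ and $X_{b_1}\cup X_{b_2}\cup X_{b_3}$, and you show that each deletion leaves a graph covered by two cliques, which is therefore perfect. The only difference is cosmetic. For $S-A_2$ the paper uses the clique cover $A_1$ and $X_{c_1}\cup X_{c_2}$, whereas you use $X_{a_1}\cup X_{a_2}\cup X_{c_2}$ and $X_{c_1}$, and both covers are valid.
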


\begin{proof}
Let $(X_v:v\in V(T))$ be the bags of the thickening. The terminals are
$A_1=X_{a_1}\cup X_{a_2}$ and
$A_2=X_{b_1}\cup X_{b_2}\cup X_{b_3}$.

After $A_1$ is deleted, the remaining vertices are covered by the two
cliques $A_2$ and $X_{c_1}\cup X_{c_2}$. Hence $S-A_1$ is
cobipartite and perfect. After $A_2$ is deleted, the remaining vertices
are covered by the cliques $A_1$ and $X_{c_1}\cup X_{c_2}$. Thus
$S-A_2$ is also cobipartite and perfect.
\end{proof}

\subsection{Type \texorpdfstring{$\cZ_5$}{Z5}}

Following Chudnovsky and Seymour~\cite[Section~7]{ChudnovskySeymour},
we only need the members of $\cZ_5$ with two markers. In this case
$v_7\notin X$. Since $v_{13}$ is adjacent to both $v_7$ and $v_8$, the
definition of a stripe forces $v_{13}\in X$. Thus such a stripe
contains the markers $v_7,v_8$, the vertices
$v_1,\ldots,v_6,v_9,v_{10}$, and possibly $v_{11}$ and $v_{12}$.
The vertices $v_1,\ldots,v_6$ induce the cycle
$v_1v_2\cdots v_6v_1$. The remaining adjacencies are given by
\begin{align*}
N_T(v_7)&=\{v_1,v_2\},\\
N_T(v_8)&=\{v_4,v_5\},\\
N_T(v_9)&=\{v_6,v_1,v_2,v_3,v_{10},v_{11},v_{12}\},\\
N_T(v_{10})&=\{v_3,v_4,v_5,v_6,v_9,v_{11},v_{12}\},\\
N_T(v_{11})&=\{v_1,v_3,v_4,v_6,v_9,v_{10}\},\\
N_T(v_{12})&=\{v_2,v_3,v_5,v_6,v_9,v_{10}\},
\end{align*}
where an absent vertex is omitted. The pair $v_7v_8$ is strongly
antiadjacent. Every listed adjacent pair is strongly adjacent, except
that $v_9v_{10}$ may be semiadjacent. Every unlisted pair is strongly
antiadjacent. The markers are $v_7$ and $v_8$.

Figure~\ref{fig:z5} shows a representative member in which
$v_{11}$ and $v_{12}$ are absent and $v_9v_{10}$ is semiadjacent.
\begin{figure}[H]
\centering
\tikzstyle{v}=[
circle,
draw,
fill=black,
inner sep=0pt,
minimum size=5pt
]

\begin{tikzpicture}[
    scale=0.84,
    transform shape,
    baseline=(current bounding box.center)
]
    \useasboundingbox (-4.45,-2.10) rectangle (4.45,2.10);

    % Centre of the regular pentagon
    \node[v] (v10) at (0,0) {};

    % The regular pentagon centred at v10,
    % with v3v4 as its horizontal bottom edge
    \node[v] (v3) at (-54:1.65) {};
    \node[v] (v4) at (-126:1.65) {};
    \node[v] (v5) at (162:1.65) {};
    \node[v] (v6) at (90:1.65) {};
    \node[v] (v9) at (18:1.65) {};

    % Remaining vertices
    \node[v] (v1) at (2.75,1.35) {};
    \node[v] (v2) at (2.75,-1.35) {};

    % Markers
    \node[v] (v7) at (4.10,0) {};
    \node[v] (v8) at (-2.95,-0.4125) {};

    % The 6-cycle
    \draw[blue, line width=0.9pt]
        (v1)--(v2)--(v3)--(v4)--(v5)--(v6)--cycle;

    % Marker adjacencies
    \draw[blue, line width=0.9pt]
        (v7)--(v1)
        (v7)--(v2)
        (v8)--(v4)
        (v8)--(v5);

    % Adjacencies of v9
    \draw[blue, line width=0.9pt]
        (v9)--(v6)
        (v9)--(v1)
        (v9)--(v2)
        (v9)--(v3);

    % Adjacencies of v10
    \draw[blue, line width=0.9pt]
        (v10)--(v3)
        (v10)--(v4)
        (v10)--(v5)
        (v10)--(v6);

    % Possible semiadjacent pair
    \draw[blue, dashed, line width=0.9pt]
        (v9)--(v10);

    % Labels on the 6-cycle
    \node[above=5pt] at (v1) {$v_1$};
    \node[below=5pt] at (v2) {$v_2$};
    \node[below right=3pt] at (v3) {$v_3$};
    \node[below left=3pt] at (v4) {$v_4$};
    \node[above left=3pt] at (v5) {$v_5$};
    \node[above right=3pt] at (v6) {$v_6$};

    % Labels on the central vertices
    \node[right=5pt] at (v9) {$v_9$};
    \node[below=5pt] at (v10) {$v_{10}$};

    % Marker labels
    \node[right=7pt, align=center] at (v7)
        {$v_7$\\[-1pt]{\scriptsize marker}};

    \node[left=7pt, align=center] at (v8)
        {$v_8$\\[-1pt]{\scriptsize marker}};
\end{tikzpicture}

\caption{A representative stripe of type $\cZ_5$.}
\label{fig:z5}
\end{figure}
Unlike the first four types, deleting a terminal need not leave a
perfect graph. Instead, the two central bags provide a perfect set
that meets every maximal clique.

\begin{lemma}\label{lem:z5}
Let $S$ be the piece of a thickening of a stripe of type $\cZ_5$, and
let $A_1,A_2$ be its terminals. Then $A_1$ is anticomplete to $A_2$.
Moreover, there is a set
$C\subseteq V(S)\setminus(A_1\cup A_2)$ such that

\begin{enumerate}[label=\textup{$(\roman*)$}]
\item $C$ meets every maximal clique of $S$; and
\item both $S[C\cup A_1]$ and $S[C\cup A_2]$ are perfect.
\end{enumerate}
\end{lemma}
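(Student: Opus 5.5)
The plan is to work entirely in the base trigraph $T$ of type $\cZ_5$ with markers $v_7,v_8$, and lift everything to the thickening bag by bag. First I identify the terminals. Since $N_T(v_7)=\{v_1,v_2\}$ and $N_T(v_8)=\{v_4,v_5\}$, we have $A_1=X_{v_1}\cup X_{v_2}$ and $A_2=X_{v_4}\cup X_{v_5}$. In the six-cycle $v_1\cdots v_6$, neither $v_1$ nor $v_2$ is adjacent to $v_4$ or $v_5$, and these pairs are not listed as adjacent elsewhere, so they are strongly antiadjacent. Hence $A_1$ is anticomplete to $A_2$.

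For the central set I take $C=X_{v_9}\cup X_{v_{10}}$, which avoids both terminals.

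For (i), I would list the maximal cliques (triangles or edges) of $T-\{v_7,v_8\}$ and check that each contains $v_9$ or $v_{10}$. The cycle edges are covered as follows:
\begin{itemize}
\item $v_6v_1$, $v_1v_2$ and $v_2v_3$ lie in cliques with $v_9$;
\item $v_3v_4$, $v_4v_5$ and $v_5v_6$ lie in cliques with $v_{10}$.
\end{itemize}
The edges at $v_{11}$ and $v_{12}$ similarly extend to $v_9$ or $v_{10}$. For example, $v_{11}v_1$ extends by $v_9$, and $v_{11}v_4$ extends by $v_{10}$; one checks from the listed neighbourhoods that $v_{11}$ and $v_{12}$ are themselves adjacent to both $v_9$ and $v_{10}$. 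A maximal clique of the thickening $S$ meets bags indexed by a clique of $T$. If that clique avoided $v_9$ and $v_{10}$, it would be contained in one of the cliques above, and that clique could be extended by a bag of $v_9$ or $v_{10}$ that is strongly complete to it. The semiadjacent pair $v_9v_{10}$ needs care here, but the extending vertex is strongly adjacent to the others, so extension still works. This proves (i).

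For (ii), consider $S[C\cup A_1]$, which is a thickening of $T[\{v_1,v_2,v_9,v_{10}\}]$. In $T$, the vertex $v_9$ is strongly adjacent to $v_1$ and $v_2$, the vertex $v_{10}$ is antiadjacent to both, and $v_9v_{10}$ may be semiadjacent. With the ordering $v_1,v_2,v_9,v_{10}$ this is a linear interval trigraph: the only nonconsecutive adjacent pairs are $v_1v_9$, and the vertex $v_2$ lying between them is strongly adjacent to both. So $S[C\cup A_1]$ is perfect by Lemma~\ref{lem:linear-thickening}. Alternatively, it is covered by the two cliques $X_{v_1}\cup X_{v_2}\cup X_{v_9}$ and $X_{v_{10}}$, so it is cobipartite. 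The case $A_2$ is symmetric, using the ordering $v_4,v_5,v_{10},v_9$.

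The main obstacle is the bookkeeping in (i): verifying the complete list of maximal cliques once $v_{11}$ and/or $v_{12}$ are present. Here I would use the adjacency table carefully. For instance, $\{v_3,v_{11},v_{12},v_{10}\}$ must be checked against the table, since $v_{11}$ and $v_{12}$ are nonadjacent.
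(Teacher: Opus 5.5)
Your proposal is correct and follows the paper's proof: you use the same terminals and the same $C=X_{v_9}\cup X_{v_{10}}$, and you rely on the same key fact that every clique of the base trigraph avoiding $v_9,v_{10}$ has a common strong neighbour among them. The paper organizes (i) more cleanly: it notes that $X_{v_9}$ is complete to $A_1\cup D$ and $X_{v_{10}}$ is complete to $A_2\cup D$, where $D=X_{v_3}\cup X_{v_6}\cup X_{v_{11}}\cup X_{v_{12}}$. This removes the enumeration you flag as the main obstacle, and it also makes moot your slightly inaccurate ``triangles or edges'' (for instance, $\{v_3,v_9,v_{10},v_{11}\}$ is a maximal clique when $v_{11}$ is present). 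For (ii) the paper invokes Lemma~\ref{lem:clique-attachment}, whereas your direct cobipartite cover by the cliques $A_1\cup X_{v_9}$ and $X_{v_{10}}$ (or your linear interval ordering) is equally valid and slightly more direct.
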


\begin{proof}
Let $(X_v:v\in V(T))$ be the bags of the thickening. The two terminals
are $A_1=X_{v_1}\cup X_{v_2}$ and
$A_2=X_{v_4}\cup X_{v_5}$. The definition of $\cZ_5$ shows that
$A_1$ is anticomplete to $A_2$.

Put $C=X_{v_9}\cup X_{v_{10}}$ and
$D=X_{v_3}\cup X_{v_6}\cup X_{v_{11}}\cup X_{v_{12}}$, where the
bags of absent vertices are omitted. The bag $X_{v_9}$ is complete
to $A_1\cup D$, while $X_{v_{10}}$ is complete to $A_2\cup D$.

Let $K$ be a clique of $S$ that misses $C$. Since $A_1$ is
anticomplete to $A_2$, the clique $K$ cannot meet both terminals. If
$K$ meets $A_1$, then every vertex of $X_{v_9}$ is complete to $K$.
If $K$ meets $A_2$, then every vertex of $X_{v_{10}}$ is complete to
$K$. If $K$ meets neither terminal, then $K\subseteq D$, and both
$X_{v_9}$ and $X_{v_{10}}$ are complete to $K$. In each case, $K$
is not maximal. Therefore $C$ meets every maximal clique of $S$.

The graph $S[C]$ is cobipartite, with clique partition
$(X_{v_9},X_{v_{10}})$, and is therefore perfect. The terminal $A_1$
is complete to $X_{v_9}$ and anticomplete to $X_{v_{10}}$. Hence
Lemma~\ref{lem:clique-attachment} implies that $S[C\cup A_1]$ is
perfect. Similarly, $A_2$ is complete to $X_{v_{10}}$ and
anticomplete to $X_{v_9}$, so $S[C\cup A_2]$ is perfect.
\end{proof}

\subsection{Unify pieces}
The preceding five lemmas can now be stated in the form required for
combining the pieces. An orientation of a piece specifies which
terminal will be included in the transversal and which one will be
avoided.

\begin{lemma}\label{lem:oriented-local-set}
Let $G$ be a graph, let $G[V_e]$ be a piece of type
$\cZ_1,\ldots,\cZ_5$, and let $A_e^r,A_e^s$ be its terminals.
Suppose that the edge indexing this piece is oriented from $r$ to $s$
and that $|A_e^r|<\omega(G)$. Then there is a set
$P_e\subseteq V_e$ such that

\begin{enumerate}[label=\textup{$(\roman*)$}]
\item $A_e^s\subseteq P_e$ and $P_e\cap A_e^r=\emptyset$;
\item $G[P_e]$ is perfect;
\item $P_e\cap K\neq\emptyset$ for every $K\in\Omega_G$ with
$K\subseteq V_e$.
\end{enumerate}
\end{lemma}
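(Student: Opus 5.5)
We split according to the type of the piece $S=G[V_e]$, using \cref{lem:z1,lem:z2,lem:z3,lem:z4} for $\cZ_1,\ldots,\cZ_4$ and \cref{lem:z5} for $\cZ_5$.

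\emph{Types $\cZ_1,\ldots,\cZ_4$.} Put $P_e=V_e\setminus A_e^r$. Condition $(i)$ holds because the terminals are disjoint. Condition $(ii)$ holds because $G[P_e]=S-A_e^r$, which is perfect by the relevant lemma among \cref{lem:z1,lem:z2,lem:z3,lem:z4}. For $(iii)$, suppose that some $K\in\Omega_G$ with $K\subseteq V_e$ misses $P_e$. Then $K\subseteq A_e^r$, so $|K|\leq|A_e^r|<\omega(G)$, which is impossible because $K$ is a maximum clique. This is exactly where the hypothesis $|A_e^r|<\omega(G)$ is used.

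\emph{Type $\cZ_5$.} Let $C$ be the set given by \cref{lem:z5}, and put $P_e=C\cup A_e^s$. Since $C$ avoids both terminals and the terminals are disjoint, $(i)$ holds. Condition $(ii)$ is \cref{lem:z5}$(ii)$. For $(iii)$, take $K\in\Omega_G$ with $K\subseteq V_e$. Then $K$ is a clique of $S$, and it is maximal in $S$, since any clique of $S$ properly containing $K$ would be a larger clique of $G$. By \cref{lem:z5}$(i)$, $K$ meets $C\subseteq P_e$. In this case the hypothesis on $|A_e^r|$ is not even needed.

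The only point that needs care is the claim in the $\cZ_5$ case that a maximum clique of $G$ contained in $V_e$ is a maximal clique of the piece $S$. This is immediate, because $S$ is an induced subgraph of $G$ and maximality in $G$ implies maximality in $S$. Everything else is direct bookkeeping from the five preceding lemmas.
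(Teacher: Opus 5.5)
Your proposal is correct and follows the paper's proof essentially verbatim. You take $P_e=V_e\setminus A_e^r$ for $\cZ_1,\ldots,\cZ_4$, where $|A_e^r|<\omega(G)$ forces every maximum clique inside $V_e$ to meet $P_e$, and $P_e=C\cup A_e^s$ for $\cZ_5$, using that a maximum clique of $G$ contained in $V_e$ is a maximal clique of the piece.
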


\begin{proof}
Suppose first that the piece has type
$\cZ_1,\cZ_2,\cZ_3$, or $\cZ_4$. Put
$P_e=V_e\setminus A_e^r$. Since the terminals are disjoint,
$A_e^s\subseteq P_e$. The graph $G[P_e]$ is perfect by the
corresponding one of Lemmas~\ref{lem:z1}, \ref{lem:z2}, \ref{lem:z3},
and~\ref{lem:z4}. If a maximum
clique $K$ of $G$ contained in $V_e$ missed $P_e$, then
$K\subseteq A_e^r$. Consequently,
$|K|\leq |A_e^r|<\omega(G)$, a contradiction.

Now suppose that the piece has type $\cZ_5$. Let $C_e$ be the set
given by Lemma~\ref{lem:z5}, and put $P_e=C_e\cup A_e^s$. Then
$P_e$ contains $A_e^s$, avoids $A_e^r$, and induces a perfect graph.
Every $K\in\Omega_G$ contained in $V_e$ is a maximal clique of
$G[V_e]$. Hence $K$ meets $C_e$ by Lemma~\ref{lem:z5}, and therefore
meets $P_e$.
\end{proof}

\section{Perfect transversals for compositions}\label{sec:composition}

We have obtained the required perfect set inside each non-spot piece. We now
choose compatible orientations and combine the local sets from
Lemma~\ref{lem:oriented-local-set} into one perfect
$\Omega_G$-transversal. The spot pieces require separate treatment. Since
their adjacencies form a line graph, we begin with an elementary lemma about
cuts in multigraphs.

A \emph{loopless multigraph} may have parallel edges but has no loops.
Its line graph $L(B)$ has vertex set $E(B)$, where two distinct
vertices are adjacent if the corresponding edges of $B$ have a common
end. A \emph{cut} of $B$ is the set of edges with one end in each part
of a partition of $V(B)$.

\begin{lemma}\label{lem:cut}
Let $B$ be a loopless multigraph with at least one edge. Then $B$ has
a cut $F$ such that

\begin{enumerate}[label=\textup{$(\roman*)$}]
\item every vertex of positive degree in $B$ is incident with an edge
of $F$;
\item $L(B)[F]$ is perfect;
\item $F$ is an $\Omega_{L(B)}$-transversal.
\end{enumerate}
\end{lemma}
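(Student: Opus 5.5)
The plan is to take $F$ to be a \emph{maximum} cut of $B$ and to verify the three properties separately. Property $(i)$ comes from maximality, property $(ii)$ from König's edge-colouring theorem, and property $(iii)$ from the fact that maximal cliques of a line graph are either stars or triangles.

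For $(i)$, let $(X,Y)$ be a partition of $V(B)$ whose cut $F$ has maximum size. Suppose some vertex $v$ of positive degree is incident with no edge of $F$; say $v\in X$. Then every edge at $v$ has its other end in $X$, because $B$ is loopless. Moving $v$ to $Y$ puts all $\deg(v)>0$ of these edges into the cut and removes none, since $v$ had no edges into $Y$. This contradicts maximality. More generally, maximality gives that at least half of the edges at each vertex lie in $F$, but we only need the weaker statement.

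For $(ii)$, let $B_F$ be the spanning submultigraph with edge set $F$. It is a bipartite multigraph with sides $X,Y$, and $L(B)[F]=L(B_F)$. Every induced subgraph of $L(B_F)$ is $L(B')$ for some submultigraph $B'$ of $B_F$, and $B'$ is again bipartite. In $L(B')$, a set of pairwise intersecting edges cannot form a triangle of $B'$, so every clique is contained in the set of edges at one vertex. Hence $\omega(L(B'))=\Delta(B')$. By König's edge-colouring theorem, which also holds for bipartite multigraphs, $\chi(L(B'))=\chi'(B')=\Delta(B')$. Therefore $L(B_F)$ is perfect.

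For $(iii)$, which I expect to need the most care, we classify the maximal cliques of $L(B)$. Let $K$ be a set of pairwise intersecting edges. If all edges of $K$ share a common end $v$, then maximality of $K$ forces $K=E_B(v)$, since any further edge at $v$ could be added. Otherwise, take two edges $xy$ and $xz$ of $K$ with $y\neq z$, and an edge of $K$ avoiding $x$. That edge must meet both $xy$ and $xz$ away from $x$, so it is a $yz$-edge. Every further edge of $K$ meets $xy$, $xz$ and this $yz$-edge, so it lies inside $\{x,y,z\}$. Hence $K$ is contained in the set of all edges with both ends in $\{x,y,z\}$, and by maximality $K$ equals that set, with all three pairs $xy$, $yz$, $xz$ present. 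Now every member of $\Omega_{L(B)}$ is maximal. In the first case $v$ has positive degree, so $K=E_B(v)$ meets $F$ by $(i)$. In the second case the cut $(X,Y)$ separates some pair among $x,y,z$, because one side contains at most one of them or the sides split the triple. Since all three pairs are joined by edges of $K$, some edge of $K$ lies in $F$. Thus $F$ is an $\Omega_{L(B)}$-transversal.

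The main point to get right is that the star/triangle dichotomy is valid for multigraphs, where parallel edges appear in both kinds of clique. The argument above handles this because it only uses ends of edges and never assumes that edges are simple.
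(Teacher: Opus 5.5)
Parts $(i)$ and $(ii)$ are fine, and so is your star/triangle classification of maximal cliques of $L(B)$. The triangle case of $(iii)$, however, has a genuine gap.

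\textbf{The false step.} You claim the cut $(X,Y)$ must separate some pair among $x,y,z$. This does not hold: all three vertices can lie on the same side. Then one side contains none of them and no pair is separated, a case your ``at most one'' clause does not cover. Your argument also uses only that $K$ is a \emph{maximal} clique, and for maximal cliques the conclusion is false. Take a triangle $abc$ with single edges and a fourth vertex $d$ joined to each of $a,b,c$ by two parallel edges. The cut between $\{d\}$ and $\{a,b,c\}$ has size $6$, and one checks that no partition does better. Yet the maximal clique $\{ab,bc,ca\}$ of $L(B)$ contains no cut edge. It is not a maximum clique, since the star at $d$ has six edges, and that is exactly what has to be exploited.

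\textbf{The missing idea.} Combine the maximum-ness of $K$ with the half-degree property of a maximum cut, which you set aside as unnecessary. Let $t=|K|=\omega(L(B))$, and let $m_{xy},m_{yz},m_{xz}>0$ be the bundle sizes, so $t=m_{xy}+m_{yz}+m_{xz}$. Every star $E_B(v)$ is a clique of $L(B)$, so $\deg_B(v)\leq t$ for all $v$.

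Suppose $x,y,z$ lie on one side. Then all $m_{xy}+m_{xz}$ edges of $K$ at $x$ lie outside $F$. Moving $x$ to the other side cannot enlarge the cut, so at least as many edges at $x$ lie in $F$. Hence $2(m_{xy}+m_{xz})\leq\deg_B(x)\leq t$. Adding the analogous inequalities at $y$ and $z$ gives $4t\leq 3t$, a contradiction. So some pair among $x,y,z$ is separated, and its bundle meets $F$.

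This counting argument is how the paper finishes $(iii)$. With this repair your proof becomes essentially the paper's proof.
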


\begin{proof}
Choose a cut $F$ of maximum size. For $v\in V(B)$, let $c(v)$ be the
number of edges of $F$ incident with $v$, and let $i(v)$ be the number
of edges outside $F$ incident with $v$, where parallel edges are
counted separately. Moving $v$ to the other side of the cut changes
its size by $i(v)-c(v)$. The maximality of $F$ therefore gives
$c(v)\geq i(v)$. If $v$ has positive degree, this inequality implies
$c(v)>0$. This proves \textup{(i)}.

Let $B_F$ be the spanning submultigraph of $B$ with edge set $F$.
Since $F$ is a cut, $B_F$ is bipartite. Moreover,
$L(B)[F]=L(B_F)$. By K\H{o}nig's line-coloring
theorem~\cite{LovaszPlummer}, the chromatic number of the line graph
of a bipartite multigraph equals its maximum degree.

Every pairwise incident family of edges in a bipartite multigraph has
a common end. Indeed, a pairwise incident family without a common end
must be supported on the three sides of a triangle, which cannot occur
in a bipartite multigraph. Thus the clique number of the line graph of
a bipartite multigraph also equals its maximum degree. The same
argument applies to every submultigraph of $B_F$, so $L(B_F)$ is
perfect. This proves \textup{(ii)}.

Let $Q$ be a maximum clique of $L(B)$. We first recall the form of
$Q$. A pairwise incident family of edges in a loopless multigraph
either has a common end or is supported on the three sides of a
triangle. To see this, choose an edge $ab$ in a family with no common
end. There is an edge $bc$ that does not contain $a$ and an edge $ad$
that does not contain $b$. These two edges must meet, so $c=d$. Every
other edge in the family is parallel to one of $ab$, $bc$, or $ca$.

Suppose first that all edges of $Q$ have a common end $v$. Since $Q$
is maximal, it contains every edge incident with $v$. By
\textup{(i)}, at least one of these edges belongs to $F$. Hence
$Q\cap F\neq\emptyset$.

It remains to consider the case in which $Q$ is supported on a
triangle with vertices $a,b,c$. Suppose that $Q\cap F=\emptyset$.
Since $Q$ is maximal, it contains every edge in the three bundles
between $a,b,c$. Let $x,y,z>0$ be the numbers of edges in the
$ab$-, $bc$-, and $ca$-bundles, respectively, and put
$t=x+y+z=|Q|=\omega(L(B))$.

All three vertices $a,b,c$ lie on the same side of the cut. At $a$,
we have $i(a)\geq x+z$ and $c(a)\geq i(a)$. Since the full star at
$a$ is a clique of $L(B)$, we also have $c(a)+i(a)\leq t$. It follows
that $2(x+z)\leq t$. Similarly, $2(x+y)\leq t$ and
$2(y+z)\leq t$. Adding these three inequalities gives $4t\leq3t$, a
contradiction. Therefore $Q\cap F\neq\emptyset$, proving
$(iii)$.
\end{proof}

We use the definition of a composition given in Section~\ref{sec:main-claw-free}.
Let $E_0$ be the set of spot edges of $R$, let
$E_1=E(R)\setminus E_0$, and put $B=(V(R),E_0)$. If
$E_0=\emptyset$, put $F=\emptyset$ and $D=\emptyset$. Otherwise,
apply Lemma~\ref{lem:cut} to $B$, let $F\subseteq E_0$ be the
resulting cut, and let $D$ be the set of vertices of $R$ incident
with an edge of $F$. By Lemma~\ref{lem:cut}\textup{(i)}, $D$ is
exactly the set of vertices of $R$ incident with at least one spot
edge.

The selected spot vertices are $\{p_f:f\in F\}$. The next lemma
orients the non-spot edges so that every remaining hub receives a
terminal from one of its incident pieces.

\begin{lemma}\label{lem:orientation}
Suppose that $G$ is connected, has no clique cutset, and is a
composition over $R$. The edges of $E_1$ can be oriented so that every
vertex of $R$ outside $D$ that is incident with an edge of $E_1$ has
an incoming edge. Moreover, if $e=rs$ is oriented from $r$ to $s$,
then $|A_e^r|<\omega(G)$.
\end{lemma}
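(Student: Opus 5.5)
The plan is to first pin down exactly which orientations are forbidden by the second requirement, and then to orient $E_1$ component by component using spanning trees or unicyclic subgraphs.

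\emph{Step 1: locating the bad ends.} Call an end $r$ of $e\in E_1$ \emph{bad} if $|A_e^r|\ge\omega(G)$. Since $A_e^r\subseteq H_r$ and $H_r$ is a clique, a bad end forces $H_r=A_e^r$. Every edge $f\ne e$ at $r$ contributes a nonempty set $A_f^r$ to $H_r$: for spot edges this set is $\{p_f\}$, and for non-spot edges the terminals are nonempty. Hence a bad end $r$ has degree one in $R$. In particular $r\notin D$, and $e$ is the only edge of $R$ at $r$.

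\emph{Step 2: no edge has two bad ends.} If both ends of $e$ were bad, then both would have degree one in $R$. By condition $(iv)$ of a composition, $V_e$ would then have no neighbours outside itself. Since $R$ has at least two edges, this contradicts the connectivity of $G$. So the second requirement amounts to one rule: each edge with a bad end must be oriented \emph{into} that bad end. Such a bad end $r$ lies outside $D$ and has no other edges, so it needs exactly this incoming edge anyway. Thus the rule is consistent with the first requirement.

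\emph{Step 3: orienting the components of $R_1=(V(R),E_1)$.} Consider each component $Q$ of $R_1$ that has an edge.
\begin{itemize}
\item If $Q$ contains a cycle, take a spanning connected unicyclic subgraph of $Q$. Orient its cycle cyclically and orient the attached trees away from the cycle. Orient every remaining edge of $Q$ arbitrarily.
\item If $Q$ is a tree containing a vertex of $D$, root it there and orient every edge away from the root.
\end{itemize}
In both cases every vertex of $Q$ except possibly a root in $D$ gets an incoming edge. Bad ends are leaves of $R$, so they are not on the cycle and cannot be the chosen root. Hence the unique edge at a bad end is a pendant tree edge oriented toward it, and the rule of Step 2 is respected. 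The extra non-tree edges have no bad ends, since bad vertices have degree one in $R$.

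\emph{Step 4, the main obstacle: a tree component $Q$ with no vertex of $D$.} Its vertices meet no spot edges and no $E_1$-edges outside $Q$. By condition $(iv)$, $\bigcup_{e\in E(Q)}V_e$ therefore has no neighbours elsewhere, so connectivity of $G$ forces $R=Q$. Since $R$ has at least two edges, the tree $Q$ has an internal vertex $r$. I then claim that $H_r$ is a clique cutset: the pieces in two different branches at $r$ can only be joined through hubs at common ends, and every such path passes through $H_r$. Each branch contains vertices outside $H_r$, for instance the terminal of the branch's edge at its other end, because the terminals of a piece are disjoint. This contradicts the hypothesis that $G$ has no clique cutset. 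So this case cannot occur, which completes the orientation.
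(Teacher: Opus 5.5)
Your proposal is correct and follows the paper's proof in its essentials. It uses the same component-by-component orientation: for a component with a cycle, you orient the cycle cyclically and orient the rest away from it, and for a tree component you take an out-tree rooted at a vertex of $D$. It also uses the same clique-cutset argument at an internal vertex $r$, with the hub $H_r$, to rule out tree components containing no vertex of $D$.

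The only difference is how you verify $|A_e^r|<\omega(G)$. You first show that any end violating it must be a degree-one vertex of $R$, and then check that such ends are always heads. The paper instead reads the bound off the construction: every tail $r$ either lies in $D$ or has another incoming edge, so $H_r$ strictly contains $A_e^r$.
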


\begin{proof}
Let $T$ be a component of the multigraph $(V(R),E_1)$ that contains
an edge. Suppose first that $T$ contains a cycle, where two parallel
edges may form a cycle of length two. Orient one such cycle cyclically.
Contract the cycle, choose a spanning tree of the resulting component,
and orient its edges away from the contracted cycle. Orient all
remaining edges arbitrarily. Every vertex of $T$ then has an incoming
edge.

Now suppose that $T$ is a tree. We claim that $V(T)\cap D\neq
\emptyset$. Suppose otherwise. Since $D$ contains every vertex
incident with a spot edge, no vertex of $T$ is incident with a spot
edge. Moreover, since $T$ is a component of $(V(R),E_1)$, no non-spot
edge joins a vertex of $T$ to a vertex outside $T$.

If $R$ had an edge outside $T$, the corresponding piece would be
anticomplete to all pieces indexed by the edges of $T$, contrary to
the connectedness of $G$. Thus every edge of $R$ belongs to $T$.
Since $R$ has at least two edges, $T$ has a vertex $r$ of degree at
least two.

For each component $U$ of $T-r$, let $rt$ be the edge joining $r$ to
$U$, and put
$W_U=(V_{rt}\setminus A_{rt}^r)\cup
\bigcup_{e\in E(T[U])}V_e$. The sets $W_U$ partition
$V(G)\setminus H_r$. Each $W_U$ is nonempty because it contains the
nonempty terminal $A_{rt}^t$. Moreover, sets corresponding to
different components of $T-r$ are anticomplete. Hence $H_r$ is a
clique cutset of $G$, a contradiction. Therefore
$V(T)\cap D\neq\emptyset$.

Choose $d\in V(T)\cap D$, root $T$ at $d$, and orient every edge away
from $d$. Every vertex other than $d$ has an incoming edge, while $d$
is incident with a selected spot edge.

Apply this construction to every component of $(V(R),E_1)$. Let
$e=rs$ be oriented from $r$ to $s$. The vertex $r$ is incident with
either a selected spot edge or an incoming non-spot edge different
from $e$. In either case, the hub $H_r$ contains a vertex outside
$A_e^r$. Since $H_r$ is a clique, we obtain
$|A_e^r|<|H_r|\leq\omega(G)$.
\end{proof}

We can now combine the selected spot vertices and the local sets from
the non-spot pieces.

\begin{theorem}\label{thm:composition}
Let $G$ be a connected graph with no clique cutset. If $G$ is a
composition over a loopless multigraph $R$ with at least two edges,
then $G$ has a perfect $\Omega_G$-transversal.
\end{theorem}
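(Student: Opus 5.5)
The plan is to define the transversal as
\[
S=\{p_f:f\in F\}\cup\bigcup_{e\in E_1}P_e ,
\]
where $F$ and $D$ are the cut and vertex set fixed before Lemma~\ref{lem:orientation}. For each $e\in E_1$, oriented from $r$ to $s$ by Lemma~\ref{lem:orientation}, we take $P_e$ from Lemma~\ref{lem:oriented-local-set}. This is legitimate because Lemma~\ref{lem:orientation} guarantees $|A_e^r|<\omega(G)$. Two things then need checking: that $S$ meets every $K\in\Omega_G$, and that $G[S]$ is perfect.

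\emph{Transversal.} Let $K\in\Omega_G$. If $K\subseteq V_e$ for some non-spot $e$, then $K$ meets $P_e$ by Lemma~\ref{lem:oriented-local-set}(iii). Otherwise $K$ meets at least two pieces. By the cross-edge rule (iv) of a composition, any two vertices of $K$ lying in different pieces lie in terminals at a common end. A short argument, similar to the triangle analysis in Lemma~\ref{lem:cut}, then shows that either $K\subseteq H_r$ for a single $r$, or $K$ consists of spot vertices supported on a triangle of $R$. (A non-spot piece has disjoint terminals that are anticomplete or otherwise constrained, so a clique using two of its terminals is ruled out, and a clique spread over three hubs can only use spot vertices.) By maximality, in the first case $K=H_r$.
\begin{itemize}
\item If $r\in D$, some $f\in F$ is incident with $r$, so $p_f\in H_r\cap S$.
\item If $r\notin D$, then $r$ has an incoming non-spot edge $e$, and $A_e^r\subseteq P_e$ by Lemma~\ref{lem:oriented-local-set}(i), with the roles of the ends taken from the orientation.
\end{itemize}
In the triangle case, $K$ corresponds to a maximum clique of $L(B)$, but one must check that it is maximum in $L(B)$ and not just in $G$. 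I would argue this via $H_r\supseteq$ the spot star at $r$, using that $|K|=\omega(G)\ge|H_r|$ for the three triangle vertices, so the counting argument of Lemma~\ref{lem:cut}(iii) still applies: cut-maximality gives $c(a)\ge i(a)$, and $c(a)+i(a)\le|H_a|\le t$. Hence $K\cap F\neq\emptyset$.

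\emph{Perfection.} Decompose $G[S]$ hub by hub. The edges of $G[S]$ between different pieces lie inside hubs, and at a hub $H_r$ the set $S\cap H_r$ consists of the terminals $A_e^r$ for edges $e$ oriented into $r$, together with the spot vertices $p_f$ for $f\in F$ at $r$. My plan is to show that no odd hole or odd antihole of $G[S]$ can cross pieces. An odd antihole of length $\geq7$ is highly connected, so it cannot straddle two pieces joined only through cliques in the hub structure; that leaves odd holes. An odd hole passing through several pieces enters and leaves each non-spot piece through its two distinct terminals, which would require both terminals of $e$ to lie in $S$. This is forbidden by $P_e\cap A_e^r=\emptyset$. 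Hence every non-spot piece meets such a hole only through one terminal, that is, through a single hub clique. It follows that $G[S]$ is obtained from the perfect graphs $G[P_e]$ and from $L(B)[F]$ (perfect by Lemma~\ref{lem:cut}(ii)) by gluings along cliques and clique attachments in the manner of Lemma~\ref{lem:clique-attachment}.

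The cleanest way to formalize this is to contract each $P_e$ onto its terminal $A_e^s$. The graph induced on $\bigcup_e A_e^s\cup\{p_f\}$ is then a line graph of the bipartite multigraph $(V(R),F)$ with extra pendant cliques. The idea is to show that $G[S]$ arises by repeatedly applying Lemma~\ref{lem:clique-attachment}, or clique-cutset gluing, which preserves perfection. I expect this perfection step to be the main obstacle. In particular, the interaction of several terminals $A_e^s$ attached at the same hub with the bipartite structure of $F$ needs care, and I would first try to show that each hub clique $S\cap H_r$ is a clique cutset of $G[S]$ separating the pieces' interiors, then use induction on the number of pieces.
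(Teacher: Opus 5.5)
Your set $S$ is the paper's transversal, and three of your cases are handled correctly: a clique inside one non-spot piece, a clique equal to a hub, and a spot-only triangle clique. In the triangle case you can skip the recount. The spot vertices induce $L(B)$ as an induced subgraph of $G$, so $K$ is automatically a maximum clique of $L(B)$ and Lemma~\ref{lem:cut}\textup{(iii)} applies directly.

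\textbf{First gap: the transversal dichotomy is false.} Your case analysis rests on the claim that a clique cannot use both terminals of a non-spot piece. Only $\cZ_5$ has anticomplete terminals (Lemma~\ref{lem:z5}). In $\cZ_4$ the pair $a_1b_1$ is strongly adjacent. In $\cZ_2$ each $a_i$ is adjacent to $b_i$. In the $\cZ_1$ example of Figure~\ref{fig:z1}, $v_3\in A_1$ is adjacent to $v_5\in A_2$. So let $e=rs$ be such a piece and $f$ a spot edge parallel to $e$. A clique made of vertices of $A_e^r$, vertices of $A_e^s$, and $p_f$ lies in no single hub, is not contained in $V_e$, and is not spot-only. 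It can also be a maximum clique, and none of your cases covers it.

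The fix is the paper's clique-location claim. By the cross-edge rule, a clique meeting $V_e$ and another piece meets $V_e$ only inside $A_e^r\cup A_e^s$. If it meets just one terminal, it lies in that hub. Otherwise it meets the head terminal $A_e^s\subseteq P_e$.

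\textbf{Second gap: perfection is not proved.} Your remarks about odd holes and antiholes are heuristic, and you defer the formalization. No analysis of holes, antiholes, or the bipartite structure of $F$ at a hub is needed. Start from the selected spot vertices; they induce $L(B)[F]$, which is perfect by Lemma~\ref{lem:cut}\textup{(ii)}. Then add $P_{e_1},\ldots,P_{e_m}$ one at a time.

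Suppose $e_i$ is oriented from $r$ to $s$. Then $P_{e_i}\cap A_{e_i}^r=\emptyset$, and vertices of $V_{e_i}$ outside both terminals have no neighbors in other pieces. By the cross-edge rule, the only edges from $P_{e_i}$ to earlier vertices join $A_{e_i}^s$ to $Q_i$, the set of earlier vertices lying in $H_s$, and every such pair is adjacent. Both $A_{e_i}^s$ and $Q_i\subseteq H_s$ are cliques, and $G[P_{e_i}]$ is perfect. So Lemma~\ref{lem:clique-attachment} shows each step preserves perfection. Several terminals entering the same hub cause no difficulty, because earlier ones simply become part of the clique $Q_i$.
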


\begin{proof}
Orient the edges of $E_1$ as in Lemma~\ref{lem:orientation}. For every
edge $e=\overrightarrow{rs}$ of $E_1$, apply
Lemma~\ref{lem:oriented-local-set} and let $P_e$ be the resulting set.
Put $P=\{p_f:f\in F\}\cup\bigcup_{e\in E_1}P_e$.

We first record the precise location of a clique that meets more than
one piece.

\begin{claim}\label{claim:clique-location}
Let $K$ be a clique of $G$ that meets a non-spot piece $V_e$, where
$e=rs$, and at least one other piece. Then
\[
K\cap V_e\subseteq A_e^r\cup A_e^s.
\]
If $K\cap V_e$ is contained in one terminal, then $K$ is contained in
the corresponding hub. Otherwise, $K$ meets both terminals, and every
other piece met by $K$ is a spot piece indexed by an edge with ends
$r$ and $s$.
\end{claim} \vspace{-0.6em}

Every vertex of $V_e\setminus(A_e^r\cup A_e^s)$ has no neighbor in a
different piece, which proves the displayed containment. Suppose that
$K\cap V_e\subseteq A_e^r$. If $f\neq e$ and
$y\in K\cap V_f$, then the cross-edge rule, applied to any
$x\in K\cap V_e$, forces $f$ to be incident with $r$ and
$y\in A_f^r$. Hence every vertex of $K$ belongs to $H_r$. The same
argument applies with $s$ in place of $r$.

It remains to consider the case in which there are vertices
$x_r\in K\cap A_e^r$ and $x_s\in K\cap A_e^s$. Let
$f\neq e$ and $y\in K\cap V_f$. Since $y$ is adjacent to both
$x_r$ and $x_s$, the cross-edge rule gives
$y\in A_f^r\cap A_f^s$ and shows that the ends of $f$ are $r$ and
$s$. The terminals of a non-spot piece are disjoint. Therefore $f$ is
a spot edge, as required. This proves
Claim~\ref{claim:clique-location}.\hfill$\blacksquare$

\begin{claim}\label{claim:composition-transversal}
$P$ is an $\Omega_G$-transversal.
\end{claim} \vspace{-0.6em}

Let $K\in\Omega_G$. Suppose first that $K$ is contained in a
non-spot piece $V_e$. Then $K\cap P_e\neq\emptyset$ by
Lemma~\ref{lem:oriented-local-set}.

Suppose next that $K$ is contained in a hub $H_r$. Since $H_r$ is a
clique and $|K|=\omega(G)$, we have $K=H_r$. If $r\in D$, then $H_r$
contains a selected spot vertex $p_f$ with $f\in F$. If $r\notin D$,
then $r$ is incident with a non-spot edge. By
Lemma~\ref{lem:orientation}, some non-spot edge $e=t\to r$ is directed
into $r$. Since $A_e^r\subseteq P_e$, the clique $K$ meets $P_e$.

Now suppose that $K$ meets a non-spot piece $V_e$, meets another
piece, and is not contained in a hub. By
Claim~\ref{claim:clique-location}, the clique $K$ meets both terminals
of $V_e$. The set $P_e$ contains the terminal at the head of $e$, and
therefore $K\cap P_e\neq\emptyset$.

It remains to consider the case in which $K$ consists only of spot
vertices. The corresponding edges of $B$ are pairwise incident. If
they have a common end, then $K$ is contained in a hub, which has
already been considered. Otherwise, they are supported on the three
sides of a triangle. The spot vertices induce $L(B)$. Since $L(B)$ is
an induced subgraph of $G$ and $K$ is a maximum clique of $G$, the
clique $K$ is also maximum in $L(B)$. Lemma~\ref{lem:cut}\textup{(iii)}
now gives $K\cap\{p_f:f\in F\}\neq\emptyset$. This proves
Claim~\ref{claim:composition-transversal}.\hfill$\blacksquare$

\begin{claim}\label{claim:composition-perfect}
$G[P]$ is perfect.
\end{claim} \vspace{-0.6em}

The selected spot vertices induce $L(B)[F]$, which is perfect by
Lemma~\ref{lem:cut}\textup{(ii)}. List the edges of $E_1$ as
$e_1,\ldots,e_m$, and add the sets
$P_{e_1},\ldots,P_{e_m}$ in this order.

Suppose that $e_i=r\to s$. The set $P_{e_i}$ contains $A_{e_i}^s$
and avoids $A_{e_i}^r$. Every vertex of $P_{e_i}$ outside
$A_{e_i}^s$ has no neighbor in another piece. Let $Q_i$ be the set
of vertices already selected in the hub $H_s$. Then $Q_i$ is a clique,
and the only edges between $P_{e_i}$ and the vertices already added
are all the edges between $A_{e_i}^s$ and $Q_i$.

Since $G[P_{e_i}]$ is perfect,
Lemma~\ref{lem:clique-attachment} shows that adding $P_{e_i}$
preserves perfectness. Repeating this for
$i=1,\ldots,m$ proves that $G[P]$ is perfect. This proves
Claim~\ref{claim:composition-perfect}.\hfill$\blacksquare$

\medskip
Therefore $P$ is a perfect $\Omega_G$-transversal. This completes the
proof of Theorem~\ref{thm:composition}.
\end{proof}
\section{Proof of the main theorem}\label{sec:main-results}

The weight function is handled by replacing each vertex with a clique
whose size is its weight. This turns maximum-weight cliques into
ordinary maximum cliques and allows us to apply
Theorem~\ref{thm:structure}.

\begin{theorem}\label{thm:claw-weighted}
Every claw-free graph is perfectly weight divisible.
\end{theorem}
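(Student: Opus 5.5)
Suppose the theorem fails, and let $G$ be a counterexample with $|V(G)|$ minimum. Then $G$ is a minimal non-perfectly weight divisible claw-free graph, since claw-freeness is hereditary. As in Section~\ref{sec:preliminaries}, fix a positive integral weight function $h$ for which $G$ itself has no $h$-perfect division; every proper induced subgraph has divisions for all weights. By Lemmas~\ref{lem:minimal} and~\ref{lem:cutset}, $G$ is connected, has no simplicial vertex and has no clique cutset. We will derive a contradiction by producing a perfect $\Omega_{G,h}$-transversal, which suffices by Observation~\ref{obs:weighted-transversal}.

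The first step is the blow-up. Let $G_h$ be obtained from $G$ by replacing each vertex $v$ with a clique $X_v$ of size $h(v)$. Two vertices in different bags $X_u$ and $X_v$ are adjacent exactly when $uv\in E(G)$. Then $G_h$ is a thickening of $G$ in which all adjacencies are strong. We check the following properties in turn.
\begin{enumerate}[label=\textup{(\alph*)}]
\item $G_h$ is claw-free. A claw in $G_h$ uses pairwise nonadjacent leaves, so they lie in distinct bags. Its center also lies in a bag distinct from theirs. Hence the claw projects to a claw in $G$.
\item $G_h$ is connected and has at least one edge.
\item $G_h$ has no simplicial vertex. If $x\in X_v$ were simplicial in $G_h$, then $N_{G_h}[x]=\bigcup_{u\in N_G[v]}X_u$ would be a clique, making $v$ simplicial in $G$.
\item $G_h$ has no clique cutset. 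A clique cutset $Y$ of $G_h$ must contain whole bags of a clique $Y'$ of $G$, plus possibly parts of further bags. The parts of partially covered bags are complete to everything adjacent to their bag, so they cannot separate anything. Hence $Y'$, the set of bags fully contained in $Y$, is a clique cutset of $G$, a contradiction. This step needs some care, because a partial bag $X_u\setminus Y$ is nonempty and remains connected to the bag's neighbours.
\end{enumerate}
The maximum cliques of $G_h$ are exactly the unions $\bigcup_{v\in K}X_v$ with $K\in\Omega_{G,h}$.

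Next we apply Theorem~\ref{thm:structure} to $G_h$. In each of its three outcomes we obtain a perfect $\Omega_{G_h}$-transversal $S$.
\begin{itemize}
\item In outcome $(i)$, use Lemma~\ref{lem:three-cliques}.
\item In outcome $(ii)$, use Lemmas~\ref{lem:icosahedral}, \ref{lem:circular-antineighborhood} and~\ref{lem:antiprismatic-antineighborhood}.
\item In outcome $(iii)$, use Theorem~\ref{thm:composition}, which requires exactly connectivity and the absence of clique cutsets.
\end{itemize}

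It remains to pull $S$ back to $G$. The naive choice $\{v : X_v\cap S\ne\emptyset\}$ need not induce a perfect graph, because $S$ may pick different vertices from different bags. Instead, fix a representative: for each $v$ with $X_v\cap S\neq\emptyset$, choose one vertex $x_v\in X_v\cap S$, and let $S'=\{v: X_v\cap S\ne\emptyset\}$.

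Then $G[S']$ is isomorphic to $G_h[\{x_v : v\in S'\}]$. This holds because vertices in distinct bags are adjacent exactly according to $G$. The latter graph is an induced subgraph of the perfect graph $G_h[S]$, so $G[S']$ is perfect.

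Finally, let $K\in\Omega_{G,h}$. Then $\bigcup_{v\in K}X_v$ is a maximum clique of $G_h$, so it meets $S$, and therefore $K$ meets $S'$. Thus $S'$ is a perfect $\Omega_{G,h}$-transversal, which contradicts the choice of $h$.

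The main obstacle is verifying that the blow-up inherits the hypotheses of Theorem~\ref{thm:structure}, and in particular that it has no clique cutset, property (d) above.
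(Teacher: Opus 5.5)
Your proposal is correct and follows the paper's proof essentially step for step: minimal counterexample, Lemmas~\ref{lem:minimal} and~\ref{lem:cutset}, the clique blow-up, Theorem~\ref{thm:structure} with the same lemmas in each outcome, and the pull-back to $G$ via one representative per bag. The only real difference is in (d). There you observe directly that $G_h-Y$ is a blow-up of $G-Y'$ with nonempty bags, so it is disconnected exactly when $G-Y'$ is, whereas the paper first passes to an inclusion-minimal clique cutset; your route is equally valid. Also, your caveat about the ``naive choice'' is unfounded: $S'$ is that very set, and your own argument shows it induces a perfect graph.
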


\begin{proof}
Suppose not, and choose a claw-free graph $G$ that is not perfectly
weight divisible with $|V(G)|$ minimum. Then every proper induced
subgraph of $G$ is perfectly weight divisible. Since $G$ is not
perfectly weight divisible, there are an induced subgraph $H$ of $G$
with at least one edge and a positive integral weight function $h_H$
on $V(H)$ such that $H$ has no $h_H$-perfect division. If $H$ were
proper, this would contradict the perfect weight divisibility of $H$.
Hence $H=G$. Writing $h=h_H$, we conclude that $G$ has no
$h$-perfect division. By Lemmas~\ref{lem:minimal}
and~\ref{lem:cutset}, $G$ is connected and has neither a simplicial vertex
nor a clique cutset.

For each $v\in V(G)$, replace $v$ with a clique $X_v$ of size $h(v)$.
For distinct $u,v\in V(G)$, $X_u$ is complete to $X_v$ if
$uv\in E(G)$, and is anticomplete otherwise. Denote the
resulting graph by $G^h$.

\begin{claim}\label{claim:blowup-structure}
$G^h$ is connected and claw-free and has neither a simplicial vertex
nor a clique cutset.
\end{claim} \vspace{-0.6em}

The connectedness of $G^h$ follows from that of $G$. An induced claw
in $G^h$ contains at most one vertex from each bag, and the indices of
its four bags would induce a claw in $G$. Hence $G^h$ is claw-free.

Suppose that some $x\in X_v$ is simplicial in $G^h$. If $u$ and $w$
are two neighbors of $v$ in $G$, then every vertex of $X_u\cup X_w$
is adjacent to $x$. Since $x$ is simplicial, $X_u$ is complete to
$X_w$, and hence $uw\in E(G)$. Thus $N_G(v)$ is a clique, so $v$ is
simplicial in $G$, a contradiction.

Suppose that $G^h$ has a clique cutset, and choose an
inclusion-minimal clique cutset $Q$. We first show that $Q$ is a union
of whole bags. Suppose that $Q$ contains some but not all vertices of
a bag $X_v$. Choose $x\in Q\cap X_v$ and $y\in X_v\setminus Q$.
The vertices $x$ and $y$ are true twins outside their bag: every
vertex outside $X_v$ is adjacent to either both or neither of them.
Moreover, $X_v$ is a clique. Hence every neighbor of $x$ outside $Q$
is adjacent to $y$ and belongs to the component of $G^h-Q$ containing
$y$. Therefore, adding $x$ back cannot join two
components of $G^h-Q$. It follows that $Q\setminus\{x\}$ is still a
clique cutset, contrary to the minimality of $Q$.

Thus $Q=\bigcup_{v\in D}X_v$ for some $D\subseteq V(G)$. Since $Q$
is a clique, $D$ is a clique of $G$. Moreover, connectivity among the
bags outside $Q$ is exactly connectivity in $G-D$. Hence $G-D$ is
disconnected, so $D$ is a clique cutset of $G$, a contradiction.
This proves Claim~\ref{claim:blowup-structure}.\hfill$\blacksquare$

\begin{claim}\label{claim:blowup-transversal}
$G^h$ has no perfect $\Omega_{G^h}$-transversal.
\end{claim} \vspace{-0.6em}

Suppose that $T$ is a perfect $\Omega_{G^h}$-transversal, and put
$P=\{v\in V(G):T\cap X_v\neq\emptyset\}$. Choosing one vertex from
each nonempty set $T\cap X_v$ gives an induced copy of $G[P]$ inside
$G^h[T]$. Since $G^h[T]$ is perfect, $G[P]$ is perfect.

The support of every clique of $G^h$ is a clique of $G$, and every
clique can be extended to contain all vertices of each bag that it
meets. Consequently, $\omega(G^h)=\omega_h(G)$. If
$K\in\Omega_{G,h}$, then $\bigcup_{v\in K}X_v$ is a maximum clique of
$G^h$. Since $T$ meets this clique, $P\cap K\neq\emptyset$.
Therefore $P$ is a perfect $\Omega_{G,h}$-transversal. By
Observation~\ref{obs:weighted-transversal}, $G$ has an $h$-perfect
division, a contradiction. This proves
Claim~\ref{claim:blowup-transversal}.\hfill$\blacksquare$

\medskip
Apply Theorem~\ref{thm:structure} to $G^h$. In outcome
\textup{(i)}, Lemma~\ref{lem:three-cliques} gives a perfect
$\Omega_{G^h}$-transversal. In outcome \textup{(ii)}, use
Lemma~\ref{lem:icosahedral} when the graph belongs to $\cS_1$,
Lemma~\ref{lem:circular-antineighborhood} when it belongs to $\cS_3$,
and Lemma~\ref{lem:antiprismatic-antineighborhood} when it belongs to
$\cS_7$. In outcome \textup{(iii)},
Theorem~\ref{thm:composition} gives a perfect
$\Omega_{G^h}$-transversal. Every outcome contradicts
Claim~\ref{claim:blowup-transversal}. This completes the proof of
Theorem~\ref{thm:claw-weighted}.
\end{proof}

\begin{proof}[Proof of Theorem~\ref{thm:main}]
Suppose that some fork-free graph is not perfectly weight divisible, and
choose a minimal non-perfectly weight divisible induced subgraph $G$ of such
a graph. The graph $G$ is fork-free, and Theorem~\ref{thm:xu-zhuang} implies
that $G$ is claw-free. This contradicts
Theorem~\ref{thm:claw-weighted}. This completes the proof of
Theorem~\ref{thm:main}.
\end{proof}

\section{Concluding remarks}\label{concluding}

Theorem~\ref{thm:main} deals with the fork, which is obtained from a claw
by subdividing one edge once. It is natural to ask whether the same
conclusion holds for other trees with similar structures. For positive integers $a,b,c$, let
$S_{a,b,c}$ be the tree formed by three paths of lengths $a,b,c$ that have
one common end and are otherwise vertex-disjoint. Thus the fork is
$S_{1,1,2}$. We also write $E=S_{1,2,2}$; this is the graph obtained from a
claw by subdividing two different edges once. The four trees used below
are shown in Figure~\ref{fig:nearby-trees}.

\begin{figure}[H]
\centering

\begin{minipage}[c]{0.22\textwidth}
\centering
\begin{tikzpicture}[scale=0.85]
\useasboundingbox (-1.25,-1.35) rectangle (1.25,1.35);
\node[vtx] (c) at (0,0) {};
\node[vtx] (a) at (0,1.00) {};
\node[vtx] (b) at (1.00,0) {};
\node[vtx] (d) at (0,-1.00) {};
\node[vtx] (e) at (-1.00,0) {};
\draw[ed] (c)--(a) (c)--(b) (c)--(d) (c)--(e);
\end{tikzpicture}

\smallskip
{\small\emph{$K_{1,4}$}}
\end{minipage}
\hfill
\begin{minipage}[c]{0.24\textwidth}
\centering
\begin{tikzpicture}[scale=0.85]
\useasboundingbox (-1.15,-1.35) rectangle (2.35,1.35);
\node[vtx] (c) at (0,0) {};
\node[vtx] (p) at (-0.85,0.80) {};
\node[vtx] (q) at (-0.85,-0.80) {};
\node[vtx] (a) at (0.75,0) {};
\node[vtx] (b) at (1.50,0) {};
\node[vtx] (d) at (2.25,0) {};
\draw[ed] (c)--(p) (c)--(q) (c)--(a)--(b)--(d);
\end{tikzpicture}

\smallskip
{\small\emph{$S_{1,1,3}$}}
\end{minipage}
\hfill
\begin{minipage}[c]{0.22\textwidth}
\centering
\begin{tikzpicture}[scale=0.85]
\useasboundingbox (-1.05,-1.35) rectangle (1.05,1.35);

\node[vtx] (v1) at (-0.45,0.95) {};
\node[vtx] (v2) at (-0.45,0) {};
\node[vtx] (v3) at (-0.45,-0.95) {};

\node[vtx] (u1) at (0.55,0.95) {};
\node[vtx] (u2) at (0.55,0) {};
\node[vtx] (u3) at (0.55,-0.95) {};

\draw[ed]
    (v1)--(v2)--(v3)
    (v1)--(u1)
    (v2)--(u2)
    (v3)--(u3);
\end{tikzpicture}

\smallskip
{\small\emph{$E=S_{1,2,2}$}}
\end{minipage}
\hfill
\begin{minipage}[c]{0.22\textwidth}
\centering
\begin{tikzpicture}[scale=0.80]
\useasboundingbox (-1.45,-1.35) rectangle (1.45,1.45);
\node[vtx] (c) at (0,0) {};
\foreach \i/\ang in {1/90,2/210,3/330}{
    \node[vtx] (u\i) at (\ang:0.75) {};
    \node[vtx] (z\i) at (\ang:1.45) {};
    \draw[ed] (c)--(u\i)--(z\i);
}
\end{tikzpicture}

\smallskip
{\small\emph{$S_{2,2,2}$}}
\end{minipage}

\caption{Four trees related to the fork.}
\label{fig:nearby-trees}
\end{figure}

We recall some standard notation. An \emph{independent
set} of a graph is a set of pairwise nonadjacent vertices. The
independence number $\alpha(G)$ is the maximum size of an independent
set of $G$, and $\Delta(G)$ is the maximum degree of $G$. For a positive
integer $k$, let $P_k$ denote the path on $k$ vertices, and for
$k\geq3$, let $C_k$ denote the cycle on $k$ vertices. For integers
$s,t\geq2$, the Ramsey number $r(s,t)$ is the least integer $n$ such
that every graph on $n$ vertices contains a clique of size $s$ or an
independent set of size $t$.
We next show that forbidding any one of $K_{1,4}$, $S_{1,1,3}$, and
$S_{2,2,2}$ is not enough to ensure perfect divisibility. We first recall
two results that will be used below.

\begin{theorem}[Mattheus--Verstra\"ete~\cite{MattheusVerstraete2024}]
\label{thm:ramsey-4k}
There is a constant $c>0$ such that, for every sufficiently large
integer $k$, we have
$$
r(4,k)\geq \frac{ck^3}{\log^4 k}.
$$
\end{theorem}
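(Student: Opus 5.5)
This is a deep external result, and within the paper it is only invoked, not reproved. If I had to prove it, I would follow the algebraic--probabilistic strategy of Mattheus and Verstra\"ete. The lower bound $r(4,k)\geq ck^3/\log^4k$ requires, for infinitely many $k$, a $K_4$-free graph on roughly $k^3/\log^4k$ vertices with independence number less than $k$. The plan is to start from a highly structured incidence geometry whose line graph is a union of cliques with very controlled overlaps. I would then destroy all $K_4$'s by a random sparsification inside each clique and control the independence number through pseudorandomness.

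The first step is the construction. I would take the Hermitian unital $\mathcal U$ in $\mathrm{PG}(2,q^2)$. It has $q^3+1$ points and about $q^4$ blocks, each block of size $q+1$, and two points lie in exactly one block. The key geometric fact I would use is that $\mathcal U$ contains no O'Nan configuration, that is, no four blocks meeting pairwise in six distinct points. Let $H$ be the graph on the blocks in which two blocks are adjacent when they meet. Then $H$ is an edge-disjoint union of $q^3+1$ cliques $Q_p$, one per point $p$, each of size $q^2$. For each $p$ I would choose a uniformly random bipartition of $Q_p$ and keep only the complete bipartite graph between the two parts. Let $G$ be the union of these bipartite graphs.

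Next I would check that $G$ is $K_4$-free. Consider a $K_4$ in $G$. Its six edges come from cliques $Q_p$. No triangle can live inside a single $Q_p$, because the kept graph there is bipartite. Hence the six edges would have to come from at least $\ldots$ distinct points in a way that forces four blocks meeting pairwise in six distinct points, or a degenerate case where three edges share a point, which again produces a triangle inside one bipartite piece. The first possibility is exactly an O'Nan configuration, which $\mathcal U$ excludes, and the second is excluded by bipartiteness. I expect this to be a short case analysis.

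The main obstacle is bounding $\alpha(G)$. I would first use the spectral gap of $H$: its eigenvalues come from the point--block incidence structure of $\mathcal U$, and the second eigenvalue should be of order $q^{2}$ against degree about $q^3$. From an expander-mixing argument, every set of about $q^2$ (times logs) blocks spans many edges of $H$, spread over many points $p$. Then, using a container-style counting argument over the random bipartitions, I would show that with positive probability every set of size $t\approx q^{4/3}\log^{O(1)} q$ contains an edge of $G$. The reason is that each $Q_p$ meeting a candidate set in $m$ vertices fails to give an edge with probability about $2^{1-m}$, and one must union-bound over a small family of containers rather than over all $t$-sets. I anticipate that balancing the number of containers against these failure probabilities is the hardest and most delicate step. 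If needed, I would also pass to a random induced subgraph on about $q^4$ (or fewer) blocks to tune the parameters. Setting $n\approx q^4$ and $k\approx q^{4/3}\log^{O(1)}q$ gives $n\gtrsim k^3/\log^4k$. Density of primes, via Bertrand, extends the bound from the values of $k$ arising from prime powers $q$ to all sufficiently large $k$.
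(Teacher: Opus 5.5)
The paper gives no proof of this statement; it is quoted from Mattheus--Verstra\"ete and used only in Proposition~\ref{prop:three-counterexamples}. Your construction matches theirs: the block graph $H$ of the Hermitian unital, with a random complete bipartite subgraph kept inside each point-clique $Q_p$. Your $K_4$-freeness argument is also right. If two of the six pairwise intersection points of four blocks coincide, some point lies on three of the blocks and gives a triangle inside one bipartite piece; otherwise the four blocks form an O'Nan configuration.

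The gap is in the independence number, where your parameters cannot be achieved. In the graph $G$ on all $\approx q^4$ blocks, each side of the bipartition of $Q_p$ is independent in $G$. Indeed, two blocks through $p$ meet only in $p$, so the edge of $H$ joining them comes from $Q_p$ alone and is discarded. Hence $\alpha(G)\geq q^2/2$ for every choice of bipartitions. No counting argument can give $\alpha(G)\leq q^{4/3}\log^{O(1)}q$, and the full graph only yields $r(4,k)\gtrsim k^2$. The vertex sampling that you call optional is the essential step. Keep each block independently with probability $\rho$ of order $q^{-1}\log^2 q$, so that each point-clique retains only about $q\log^2 q$ blocks. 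The sampled graph then has $n$ of order $q^3\log^2 q$ vertices, the target is $k$ of order $q\log^2 q$, and $n\gtrsim k^3/\log^4 k$.

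Your spectral step also needs repair. With second eigenvalue $\approx q^2$, degree $\approx q^3$, and $\approx q^4$ vertices, the two-sided expander mixing lemma controls only sets of size $\gg q^3$. That sets of size $\gg q^2$ span many edges of $H$ comes instead from the smallest eigenvalue $-(q+1)$ of $A_H=M^{\top}M-(q+1)I$, where $M$ is the point--block incidence matrix. Even this says nothing about the sets that actually have to be excluded, since those have size about $q\log^2 q\ll q^2$. The union bound therefore has to run over sets of size of order $q^2$, where this one-sided bound gives supersaturation. For each such set it must combine the sampling factor $\rho^{k}$ with the bipartition factor $2^{-\sum_p(|I\cap Q_p|-1)^{+}}$. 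That balance is the heart of the argument, and your outline does not supply it.
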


Let $M$ be the Mycielski--Gr\"otzsch graph with vertex set
$\{w\}\cup\{x_i,y_i:i\in\mathbb Z_5\}$ and edges
$x_ix_{i+1}$, $wy_i$, $y_ix_{i-1}$, and $y_ix_{i+1}$ for
$i\in\mathbb Z_5$.(See Figure~\ref{fig:counterexample-graphs})

\begin{figure}[H]
\centering
\begin{minipage}[c]{0.39\textwidth}
\centering
\begin{tikzpicture}[scale=0.90]
  \foreach \i in {0,...,4}{
    \pgfmathsetmacro{\ang}{90-72*\i}
    \coordinate (x\i) at (\ang:2.25);
    \coordinate (y\i) at (\ang:1.10);
  }
  \coordinate (ww) at (0,0);
  \foreach \i in {0,...,4}{
    \pgfmathtruncatemacro{\j}{mod(\i+1,5)}
    \pgfmathtruncatemacro{\jm}{mod(\i+4,5)}
    \pgfmathtruncatemacro{\jp}{mod(\i+1,5)}
    \draw[ed] (x\i)--(x\j);
    \draw[blue,line width=0.85pt] (ww)--(y\i);
    \draw[blue,line width=0.85pt] (y\i)--(x\jm);
    \draw[blue,line width=0.85pt] (y\i)--(x\jp);
  }
  \node[vtx] at (ww) {};
  \node[font=\scriptsize,below=2pt] at (ww) {$w$};
  \foreach \i in {0,...,4}{
    \pgfmathsetmacro{\ang}{90-72*\i}
    \node[vtx] at (x\i) {};
    \node[vtx] at (y\i) {};
    \node[font=\scriptsize] at (\ang:2.62) {$x_{\i}$};
    \node[font=\scriptsize] at (\ang:1.43) {$y_{\i}$};
  }
\end{tikzpicture}

\smallskip
% {\small\emph{The Mycielski--Gr\"otzsch graph $M$.}}
\end{minipage}

\caption{The Mycielski--Gr\"otzsch graph $M$.}
\label{fig:counterexample-graphs}
\end{figure}

\begin{lemma}[Hu--Xiong~\cite{HuXiong2025}]
\label{lem:grotzsch-p6}
$M$ is not perfectly divisible.
\end{lemma}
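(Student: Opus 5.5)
We only need one induced subgraph of $M$ with an edge that has no perfect division, and we take $M$ itself. The argument is a short counting of colours. It rests on the two classical facts that $M$ is triangle-free and that $\chi(M)=4$.

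First we check that $\omega(M)=2$. The vertex $w$ is adjacent only to the $y_i$, and these form an independent set. Two vertices $y_i$ and $y_j$ have no common neighbour among the $x$'s that are adjacent to each other. Indeed, $N(y_i)\cap\{x_k\}=\{x_{i-1},x_{i+1}\}$, and these two vertices are nonadjacent on the $5$-cycle $x_0x_1\cdots x_4x_0$. Finally, the $x_i$ induce a $C_5$, which is triangle-free. Hence $M$ has no triangle, and $\omega(M)=2$.

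Second, we recall that $\chi(M)=4$, the standard property of the Mycielskian of $C_5$. If a short self-contained proof is wanted, argue as follows. Suppose $\phi$ is a proper $3$-colouring of $M$, and let $\phi(w)=3$. Then no $y_i$ receives colour $3$. For each $x_i$ with $\phi(x_i)=3$, recolour $x_i$ with $\phi(y_i)$. This is proper on the $x$-cycle because $N(y_i)\supseteq\{x_{i-1},x_{i+1}\}$, and two $x_i$ coloured $3$ are never adjacent. The result is a proper $2$-colouring of $C_5$, which is impossible.

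Now suppose $V(M)=A\mathbin{\dot\cup}B$ is a perfect division. Then $\omega(M[B])<\omega(M)=2$, so $B$ is an independent set. Also $M[A]$ is perfect and triangle-free, so $\chi(M[A])=\omega(M[A])\le 2$. Colouring $M[A]$ with two colours and $B$ with a third gives a proper $3$-colouring of $M$, which contradicts $\chi(M)=4$. Hence $M$ has no perfect division, so it is not perfectly divisible. The only step needing care is $\chi(M)=4$, handled by the recolouring argument above.
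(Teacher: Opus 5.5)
Your proof is correct. The paper gives no argument for this lemma and simply cites Hu and Xiong; your proof makes it self-contained and elementary.

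You observe that $M$ is triangle-free. A perfect division $(A,B)$ of $M$ itself would then have $B$ stable and $\chi(M[A])=\omega(M[A])\le 2$, forcing $\chi(M)\le 3$. The Mycielski recolouring argument rules this out. This is exactly the colour-counting behind Corollary~\ref{cor:fork-coloring}: a perfectly divisible graph satisfies $\chi\le\binom{\omega+1}{2}$, which equals $3$ when $\omega=2$. The paper itself uses the same device in part~$(i)$ of Proposition~\ref{prop:three-counterexamples}, so the citation could be replaced by your short argument. Your route also shows more: every triangle-free graph with chromatic number at least four already fails to have a perfect division at the top level. The citation buys only brevity.

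Two small points. First, only $\chi(M)\ge 4$ is used, and that is precisely what your recolouring establishes, so you need not invoke the upper bound. Second, the sentence ``Two vertices $y_i$ and $y_j$ have no common neighbour among the $x$'s that are adjacent to each other'' is garbled. What you mean, and what your next sentence verifies, is that no $y_i$ has two adjacent neighbours among the $x$'s. Together with the independence of $\{y_0,\dots,y_4\}$ and the triangle-freeness of the $5$-cycle on the $x_i$, this rules out every triangle.
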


\begin{proposition}\label{prop:three-counterexamples}
The following statements hold.
\begin{enumerate}[label=\textup{$(\roman*)$}]
\item There is a $K_{1,4}$-free graph that is not perfectly divisible.
\item There is an $S_{1,1,3}$-free graph that is not perfectly divisible.
\item There is an $S_{2,2,2}$-free graph that is not perfectly divisible.
\end{enumerate}
\end{proposition}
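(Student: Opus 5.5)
The plan is to use one construction for all three parts. Each of $K_{1,4}$, $S_{1,1,3}$ and $S_{2,2,2}$ has independence number at least four:
\begin{itemize}
\item the four leaves of $K_{1,4}$ are independent;
\item in $S_{1,1,3}$, the two leaves at the centre together with the second and fourth vertices of the long path form an independent set;
\item in $S_{2,2,2}$, the centre together with the three leaves is independent.
\end{itemize}
Hence every graph $G$ with $\alpha(G)\leq 3$ is simultaneously $K_{1,4}$-free, $S_{1,1,3}$-free and $S_{2,2,2}$-free. It therefore suffices to find a single graph with $\alpha(G)\leq 3$ that is not perfectly divisible.

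To certify non-divisibility I would use the coloring consequence already recorded in the proof of Corollary~\ref{cor:fork-coloring}. Any perfectly divisible graph $G$ satisfies $\chi(G)\leq\binom{\omega(G)+1}{2}$. This follows by induction on $\omega$: take a perfect division $(A,B)$, color $G[A]$ with $\omega(G)$ colors, and apply induction to $G[B]$, which is again perfectly divisible since the property is hereditary. So it is enough to exhibit a graph with $\alpha\leq3$ and $\chi>\binom{\omega+1}{2}$.

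Here Theorem~\ref{thm:ramsey-4k} enters. For large $k$, let $n=r(4,k)-1\geq ck^3/\log^4k-1$, and take a graph $H$ on $n$ vertices with no $K_4$ and no independent set of size $k$. Put $G=\overline{H}$. Then:
\begin{itemize}
\item $\alpha(G)\leq 3$ and $\omega(G)\leq k-1$;
\item every color class of $G$ has size at most $3$, so $\chi(G)\geq n/3\geq ck^3/(3\log^4k)-1/3$.
\end{itemize}
This exceeds $\binom{k}{2}\geq\binom{\omega(G)+1}{2}$ once $k$ is sufficiently large, because $k^3/\log^4k$ grows faster than $k^2$. So $G$ is not perfectly divisible, and it serves as the counterexample in all of $(i)$, $(ii)$ and $(iii)$.

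The only step requiring care is the comparison of growth rates, which is immediate from the cubic-over-polylog lower bound. An alternative for $(iii)$ alone would be the Mycielski--Gr\"otzsch graph $M$ together with Lemma~\ref{lem:grotzsch-p6}. That route would require a finite case check, up to the dihedral symmetry of $M$, that $M$ has no induced $S_{2,2,2}$. It also cannot handle $(i)$, since $w$ is the centre of an induced $K_{1,5}$. I would therefore present the uniform Ramsey argument and perhaps mention $M$ only as a remark.
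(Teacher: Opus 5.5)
Your proof is correct, and for parts (ii) and (iii) it takes a different route from the paper.

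The paper uses your Ramsey construction only for (i). This is the complement of a $K_4$-free graph on $r(4,k)-1$ vertices with no independent set of size $k$, together with $\chi\ge |V|/3>\binom{k}{2}\ge\binom{\omega+1}{2}$. For (ii) and (iii) it switches to the Mycielski--Gr\"otzsch graph $M$. There it cites Hu--Xiong for the fact that $M$ is not perfectly divisible, and asserts a case check over the branch vertices $w$, $x_i$, $y_i$ showing that $M$ has no induced $S_{1,1,3}$ or $S_{2,2,2}$.

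Your key observation is $\alpha(K_{1,4})=\alpha(S_{1,1,3})=\alpha(S_{2,2,2})=4$. It lets the single family of graphs with $\alpha\le 3$ serve all three parts at once. This removes both the external lemma on $M$ and the finite case analysis. It also gives more: since $\chi\ge n/3$ grows like $k^3/\log^4 k$ while $\omega<k$, on each of the three classes $\chi$ is not even $O(\omega^2)$, not merely above $\binom{\omega+1}{2}$.

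What the paper's route buys for (ii) and (iii) is an explicit eleven-vertex, triangle-free witness. Your route instead relies on a Ramsey-theoretic existence argument.

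One small inaccuracy in your closing remark: $M$ is also $S_{1,1,3}$-free, which is exactly how the paper uses it for (ii) as well as (iii). So it is not an alternative "for (iii) alone". Your point that $M$ cannot serve for (i), because $w$ is the centre of an induced $K_{1,5}$, is right.
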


\begin{proof}
We first prove $(i)$. By
Theorem~\ref{thm:ramsey-4k}, we may choose $k$ sufficiently large that
$r(4,k)-1>3\binom{k}{2}$. By the definition of $r(4,k)$, there is a
graph $H$ on $r(4,k)-1$ vertices containing neither a $K_4$ nor a independent set of size $k$. Thus $\omega(H)\leq3$ and $\alpha(H)<k$.
Let $G=\overline H$. Then $\alpha(G)=\omega(H)\leq3$ and
$\omega(G)=\alpha(H)<k$. Since the four leaves of an induced
$K_{1,4}$ form a independent set, $G$ is $K_{1,4}$-free. Moreover,
$$
\chi(G)\geq\frac{|V(G)|}{\alpha(G)}
>\binom{k}{2}
\geq\binom{\omega(G)+1}{2}.
$$
This proves
$(i)$.

We next prove $(ii)$ and $(iii)$. By Lemma~\ref{lem:grotzsch-p6}, $M$ is not perfectly
divisible.
It remains to check the forbidden induced subgraphs. All indices are
taken modulo $5$. From the definition of $M$, we have
$N(w)=\{y_0,y_1,y_2,y_3,y_4\}$,
$N(x_i)=\{x_{i-1},x_{i+1},y_{i-1},y_{i+1}\}$, and
$N(y_i)=\{w,x_{i-1},x_{i+1}\}$. A direct check of the three possible
types of branch vertex, namely $w$, $x_i$, and $y_i$, shows that no
induced copy of $S_{1,1,3}$ or $S_{2,2,2}$ can occur in $M$. Indeed,
in each case either one of the required vertices does not exist or two
vertices that should be nonadjacent are adjacent. Therefore $M$ is both
$S_{1,1,3}$-free and $S_{2,2,2}$-free. This proves
$(ii)$ and $(iii)$.
\end{proof}

The examples above do not settle the case $E=S_{1,2,2}$.  This leads us to the following conjecture.

\begin{conjecture}\label{conj:E-free}
Every $E$-free graph is perfectly divisible.
\end{conjecture}

\section*{Acknowledgements}

This work was supported by the National Key R\&D Program of China
(No.~2022YFA1006400) and the National Natural Science Foundation of China
(No.~12571376).

\section*{Declaration}
\noindent\textbf{Conflict of interest.}
The authors declare that they have no known competing financial interests
or personal relationships that could have influenced the work reported in
this paper.

\noindent\textbf{Data availability.}
Data sharing is not applicable because no datasets were generated or
analyzed in this study.

\end{document}